\documentclass[12pt,a4paper]{amsart}
\usepackage{amsmath}
\usepackage{amsthm}
\usepackage{enumerate}
\usepackage{amssymb}
\usepackage[utf8]{inputenc}
\usepackage{color}
\usepackage{tikz}
\usepackage{verbatim}
\usepackage{hyperref,aliascnt}
\usepackage[all,cmtip,arrow,matrix,curve]{xy}

\addtolength{\textwidth}{2cm}
\addtolength{\oddsidemargin}{-1cm}
\addtolength{\evensidemargin}{-1cm}


\theoremstyle{plain}

\newtheorem{lemma}{Lemma}[section]

\newaliascnt{thmCt}{lemma}
\newtheorem{theorem}[thmCt]{Theorem}
\aliascntresetthe{thmCt}

\newaliascnt{corCt}{lemma}
\newtheorem{corollary}[corCt]{Corollary}
\aliascntresetthe{corCt}

\newaliascnt{prpCt}{lemma}
\newtheorem{proposition}[prpCt]{Proposition}
\aliascntresetthe{prpCt}

\newtheorem*{proposition*}{Proposition}
\newtheorem*{definition*}{Definition}
\newtheorem*{theorem*}{Theorem}

\theoremstyle{definition}

\newaliascnt{dfnCt}{lemma}

\aliascntresetthe{dfnCt}

\newaliascnt{rmkCt}{lemma}
\newtheorem{remark}[rmkCt]{Remark}
\aliascntresetthe{rmkCt}

\newaliascnt{rmksCt}{lemma}

\aliascntresetthe{rmksCt}

\newaliascnt{qstCt}{lemma}

\aliascntresetthe{qstCt}

\newaliascnt{exaCt}{lemma}
\newtheorem{example}[exaCt]{Example}
\aliascntresetthe{exaCt}

\newaliascnt{exasCt}{lemma}

\aliascntresetthe{exasCt}

\newaliascnt{paraCt}{lemma}
\newtheorem{parag}[paraCt]{}
\aliascntresetthe{paraCt}

\newcounter{theoremintro}
\newtheorem{thmintro}[theoremintro]{Theorem}


\newcommand{\id}{\mathrm{id}}

\newcommand{\POM}{\mathrm{PoM}}

\newcommand{\preS}{{\mathcal S}}
\newcommand{\SR}{{\mathrm S}}

\newcommand{\CP}{\mathrm{CP}}

\newcommand{\W}{{\mathrm W}}
\newcommand{\V}{{\mathrm V}}

\newcommand{\K}{{\mathrm K}}

\newcommand{\Cu}{\mathrm{Cu}}
\newcommand{\SCu}{\mathrm{SCu}}
\newcommand{\Q}{\mathcal{Q}}

\newcommand{\SQ}{\mathrm{S}\mathcal{Q}}

\newcommand{\Tr}{\mathrm{Tr}}


\newcommand{\Z}{{\mathbb Z}}
\newcommand{\N}{{\mathbb N}}
\newcommand{\R}{{\mathbb R}}
\newcommand{\ol}{\overline}

\title[Ideals, quotients, and continuity]{Ideals, quotients, and continuity of the Cuntz semigroup for rings}
\date{\today}

\author[R.~Antoine, P.~Ara, J.~Bosa, F.~Perera, and E.~Vilalta]{Ramon Antoine\and 
        Pere Ara\and
        Joan Bosa\and
        Francesc Perera\and
        Eduard Vilalta}
        
\address{Ramon Antoine, 
	Departament de Matem\`{a}tiques,
	Universitat Aut\`{o}noma de Bar\-ce\-lo\-na,
	08193 Bellaterra, Barcelona, Spain, and
	Centre de Recerca Matem\`atica, Edifici Cc, Campus de Bellaterra, 08193 Cerdanyola del Vall\`es, Barcelona, Spain}
\email{ramon.antoine@uab.cat}

\address{Pere Ara, 
		Departament de Matem\`{a}tiques,
		Universitat Aut\`{o}noma de Barcelona,
		08193 Bellaterra, Barcelona, Spain, and
		Centre de Recerca Matem\`atica, Edifici Cc, Campus de Bellaterra, 08193 Cerdanyola del Vall\`es, Barcelona, Spain}
\email{pere.ara@uab.cat}

\address{Joan Bosa,
Departamento de Matem\'{a}ticas,
Universidad de Zaragoza,
50009 Zaragoza, Zaragoza, Spain.}
\email{jbosa@unizar.es}
\urladdr{personal.unizar.es/jbosa/}

\address{Francesc Perera,
Departament de Matem\`{a}tiques,
Universitat Aut\`{o}noma de Barcelona,
08193 Bellaterra, Barcelona, Spain, and
Centre de Recerca Matem\`a\-ti\-ca, Edifici Cc, Campus de Bellaterra,  08193 Cerdanyola del Vall\`es, Barcelona, Spain}
\email[]{francesc.perera@uab.cat}
\urladdr{https://mat.uab.cat/web/perera}

\address{Eduard Vilalta, 
Department of Mathematical Sciences, Chalmers University of Technology and University of Gothenburg, Chalmers Tvargata 3, SE-412 96 Gothenburg, Sweden}
\email[]{vilalta@chalmers.se}
\urladdr{www.eduardvilalta.com}

\thanks{All authors were partially supported by the Spanish Research State Agency (grant No.\  PID2020-113047GB-I00/AEI/10.13039/501100011033) and by the Comissionat per Universitats i Recerca de la Ge\-ne\-ralitat de Ca\-ta\-lu\-nya (grant No.\ 2021-SGR-01015).~The third named author was also partially supported by the Spanish State Research Agency through Consolidación Investigadora program No.~CNS2022-135340 and by the Dep.~ciencia, universidad y sociedad del conocimiento del Gobierno de Aragón (grupo E22-23R). The last named author was also supported by the Fields Institute for Research in Mathematical Sciences and by the Knut and Alice Wallenberg Foundation (KAW 2021.0140).~This work is also partially supported by the Spanish State Research Agency, through the Severo Ochoa and María de Maeztu Program for Centers and Units of Excellence in R\&D (CEX2020-001084-M)}

\subjclass[2020]%
{Primary
	16D25 
	06F05; 
    16D10, 
	16B99, 
	Secondary
	46L05. 
}
\keywords{Associative rings, projective modules, $C^*$-algebras, Cuntz semigroups.}

\begin{document}

\begin{abstract}
 In this paper we explore which part of the ideal lattice of a general ring is parametrized by its Cuntz semigroup $\SR(R)$ and its ambient semigroup $\Lambda(R)$. We identify these classes of ideals as the quasipure ideals (a generalization of pure ideals) in the case of $\SR(R)$, and what we term decomposable ideals in the case of $\Lambda(R)$. For an ($s$-)unital ring $R$, the latter class exhausts all ideals of the ring. We prove that these constructions behave well with respect to quotients. In order to study the passage to inductive limits, we introduce the classes of dense and left normal rings. We show that $\SR(R)$ is an abstract $\Cu$-semigroup whenever $R$ is left normal and, for such rings, the assignment $R\mapsto \SR(R)$ is continuous. We prove a parallel result for $\Lambda(R)$ whenever $R$ is a dense ring.
\end{abstract}

\date{\today}
\maketitle

\section{Introduction}
The Cuntz semigroup of a not necessarily unital ring $R$, denoted by $\SR(R)$, was introduced and studied in \cite{AntAraBosPerVil23:CuRing}. In the unital case, the main idea behind its definition consists of equipping the class of countably generated projective modules with an equivalence relation, generally weaker than isomorphism.~This semigroup is undoubtedly related to $\V^*(R)$, the monoid of iso\-mor\-phism classes of such modules, and may be thought of as a quotient of the latter that needs to be handled more delicately. A significant difference between these objects is, however, their order structure. Whilst $\V^*(R)$ is algebraically ordered (that is, $x\leq y$ if there is $z$ such that $x+z=y$), this is not the case for $\SR(R)$ except in some particular cases (for example, if $R$ is a unit-regular ring).

The study of countably generated projective modules over a ring has already a long tradition. On the one hand, as the monoid $\V^*(R)$ is an isomorphism invariant, it is interesting to investigate its structure and how it may help distinguish rings in a prescribed class. For example, a complete description of countably generated projective modules over semilocal noetherian rings was given in \cite{HerPri2011} (see also \cite{Herbera2014} and \cite{HerPri2014}); for other classes of rings, such  as von Neumann regular rings, an analysis of their countably generated projective modules was carried out in \cite{AraPerPar2000} (see also the references therein). On the other hand, there is also a close connection between projective modules and the class of pure ideals of the ring via the trace ideal construction, in the sense that any pure right ideal arises in this way and, if the ring is commutative, then the trace of any projective module yields a pure ideal. The situation is different in the noncommutative setting (see, e.g.  \cite{JonTro74}). A much deeper study of idempotent ideals arising as traces of countably generated projective modules was carried out in \cite{Herbera2014}, where a characterization of when an ideal is the trace ideal of a countably generated projective module was given. (See also \autoref{sec:pure}.)

The definition of $\SR(R)$ given in \cite{AntAraBosPerVil23:CuRing} was partly inspired by the construction carried out in \cite{Coward2008} for the class of countably generated Hilbert mo\-du\-les over a C*-algebra $A$ (that is, a norm-closed, self-adjoint subalgebra of the algebra of bounded linear operators on a complex Hilbert space), which yields the Cuntz semigroup $\Cu(A)$.~This semigroup encodes a great deal of information of the algebra. For example, as proved in  \cite{Ciuperca2010}, all closed two-sided ideals and quotients are witnessed by $\Cu(A)$ in the sense that, for any such ideal $I$ of $A$, one has that $\Cu(I)$ is an ideal of $\Cu(A)$ and $\Cu(A/I)\cong \Cu(A)/\Cu(I)$ (with suitable notions of ideal and quotients for these semigroups).~This is in stark contrast with the situation for the semigroup $\V (A)$ of isomorphism classes of finitely generated projective modules, where the isomorphism $\V(A/I)\cong\V(A)/\V(I)$ holds only for special classes of C*-algebras. In a different direction, it also bears recalling that, for the very large class of so-called \emph{classifiable algebras}, a suitable interpretation of the Cuntz semigroup defines a functor equivalent to the Elliott invariant, and thus it contains the same information as (topological) $\K$-theory and traces; see \cite{AntDadPerSan11}.

The Cuntz semigroup of a C*-algebra belongs to an abstract category of semigroups called $\Cu$, in such a way that the assignment $A\mapsto \Cu(A)$ is continuous (see \cite{Coward2008, APT-Memoirs2018}), a particularly relevant fact since models for the class of classifiable algebras come in the form of an inductive limit decomposition.

Besides the picture of $\SR(R)$  involving countably generated projective modules, we also provide in \cite{AntAraBosPerVil23:CuRing} a description of this semigroup based on (sequences of) elements in arbitrary matrices over the ring. More explicitly, for elements $x,y\in R$, one writes $x\precsim_1 y$ provided $x=rys$, for some $r,s\in R$. Then, the semigroup $\SR(R)$ may be seen as a subsemigroup of a larger object, termed $\Lambda(R)$, which is a quotient of the set of all increasing sequences of elements in matrices over $R$ with respect to the antisymmetrization of $\precsim_1$. In this picture, the semigroup $\SR(R)$ is then built out of the latter using a particular type of sequences very closely related to a description of countably generated projective modules as inductive limits of free modules; see \cite{Whthead80}, \autoref{sec:Prelims}, and also \cite[Proposition 2.13]{AntAraBosPerVil23:CuRing}  for more details.

This paper has two main goals. First, for any ring $R$, we explore the structure of the semigroups $\SR(R)$ and $\Lambda(R)$ and their relation with the ideal lattice of $R$ and also to its quotients.~(See Sections \ref{sec:decomposable}, \ref{sec:pure}, \ref{sec:QuotientsIdeals} and \ref{sec:scu}).~Secondly, we study when $\SR(R)$ and $\Lambda(R)$ are in $\Cu$, and the continuity of $\SR(\text{--})$ and $\Lambda(\text{--})$ as functors, with applications to the functor $\SCu(\text{--})$ introduced in \cite{AntAraBosPerVil23:CuRing}, where $\SCu(R)=(\Lambda(R),\SR(R))$ for a weakly $s$-unital ring $R$. (See Sections \ref{sec:leftdense}, \ref{sec:IndLim} and \ref{sec:ContinuitySCu}.)

When analysing the ideal lattice of $\Lambda(R)$ and $\SR(R)$, we are led to introduce the notions of \emph{decomposable} and  \emph{quasipure} ideals, respectively. In short, an ideal $I$ of a ring $R$ is decomposable if, for any $x\in M_\infty(I)$, there is $y\in M_\infty(I)$ with $x\precsim_1 y$; see \autoref{par:ideals}. This is a notion very much devised for general rings, as any two-sided ideal in a unital (or even weakly $s$-unital) ring is automatically decomposable. Quasipure ideals, on the other hand, are decomposable ideals where the comparison relation above satisfies an additional requirement; see \autoref{def:QuaPurId}. As it turns out, an ideal of a unital ring is quasipure precisely when it is the trace ideal of a projective right module; see \autoref{lma:CarProj}. We show that the lattice $\mathrm{Lat}(\Lambda(R))$ of order-ideals of $\Lambda(R)$ captures all decomposable ideals of $R$, and that the lattice of order-ideals $\mathrm{Lat}(\SR(R))$ of the smaller semigroup $\SR(R)$ is still large enough to witness the two-sided \emph{quasipure} ideals of $R$. More precisely, denoting by $\mathrm{Lat}_\mathrm{d}(R)$ the lattice of decomposable ideals and by $\mathrm{Lat}_\mathrm{qp}(R)$ the lattice of quasipure ideals, we prove:
\begin{thmintro}[cf. \ref{Bij_SR}, \ref{Bij_WR}, \ref{thm:retract} and \ref{thm:quo}]
	Let $R$ be any ring. Then
	\begin{enumerate}[\rm (i)]
		\item There are lattice isomorphisms 
		\[
		\mathrm{Lat}_\mathrm{d}(R)\cong \mathrm{Lat}(\Lambda(R)) \text{ and } \mathrm{Lat}_\mathrm{qp}(R)\cong \mathrm{Lat}(\SR(R)).
		\]
		\item The lattice $\mathrm{Lat}_\mathrm{qp}(R)$ is a complete sublattice (and, as a partially ordered set, a retract of $\mathrm{Lat}_\mathrm{d}(R)$).
		\item Given a decomposable ideal $I$ of $R$, we have $\Lambda(R)/\Lambda_R(I)\cong\Lambda(R/I)$. If, furthermore, $I$ is quasipure, then $\SR(R)/\SR(I)\cong \SR(R/I)$.
	\end{enumerate}
\end{thmintro}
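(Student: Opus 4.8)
The plan is to base everything on the single relation $\precsim_1$ and its behaviour on matrices over ideals, proving (i) first and then deriving (ii) and (iii) from it. For (i) I would make the two assignments explicit. A decomposable ideal $I$ is sent to the set $\Lambda_R(I)$ of classes $[\mathbf{x}]$ admitting a representative $(x_n)$ with every $x_n\in M_\infty(I)$; this is an order-ideal of $\Lambda(R)$ because block sums of matrices over $I$ again lie over $I$ (submonoid), and because any $y\precsim_1 x$ with $x\in M_\infty(I)$ satisfies $y=rxs\in M_\infty(I)$ as $I$ is two-sided (downward hereditary). Conversely an order-ideal $J$ is sent to the ideal $I_J$ of those $a\in R$ whose canonical image in $\Lambda(R)$ lies in $J$; \emph{decomposability} is exactly what guarantees that this canonical class is available and that $I_J$ is again decomposable, the point being that a single element of $M_\infty(I)$ can always be promoted to a genuine increasing $\precsim_1$-chain inside $M_\infty(I)$. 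The substance of (i) is then checking that $I\mapsto\Lambda_R(I)$ and $J\mapsto I_J$ are mutually inverse and inclusion-preserving. The statement for $\SR(R)$ and quasipure ideals follows the same template, with the additional comparison requirement in the definition of quasipure matching the restriction to the distinguished sequences that generate $\SR(R)$.

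For (ii) I would use that $\SR(R)$ is a subsemigroup of $\Lambda(R)$, so that $J\mapsto J\cap\SR(R)$ is an order-preserving map $\mathrm{Lat}(\Lambda(R))\to\mathrm{Lat}(\SR(R))$. Transporting this through the two bijections of (i) produces an order-preserving map $\rho\colon\mathrm{Lat}_\mathrm{d}(R)\to\mathrm{Lat}_\mathrm{qp}(R)$, and $\rho$ fixes each quasipure ideal because such an ideal is already decomposable and intersecting its $\Lambda$-ideal back with $\SR(R)$ returns it; this is the poset retraction. To upgrade to a complete sublattice I would verify directly that an arbitrary intersection of quasipure ideals is quasipure and that the join, computed in $\mathrm{Lat}_\mathrm{d}(R)$, of quasipure ideals is again quasipure, the latter being clearest after translating joins into suprema of order-ideals inside $\SR(R)$.

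For (iii) the quotient map $\pi\colon R\to R/I$ acts entrywise on $M_\infty(R)$, preserves $\precsim_1$ (if $x=rys$ then $\pi(x)=\pi(r)\pi(y)\pi(s)$) and preserves increasing sequences, hence induces semigroup morphisms $\pi_*\colon\Lambda(R)\to\Lambda(R/I)$ and $\SR(R)\to\SR(R/I)$. Since $\pi$ annihilates $M_\infty(I)$, the order-ideal $\Lambda_R(I)$ lies in the kernel, yielding a factorization $\overline{\pi}\colon\Lambda(R)/\Lambda_R(I)\to\Lambda(R/I)$ (and similarly through $\SR(R)/\SR(I)$), which I must show is an isomorphism. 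Surjectivity I would obtain by lifting: given an increasing sequence $(\overline{y}_n)$ in $M_\infty(R/I)$, lift each witnessing factorization $\overline{y}_n=\overline{a_n}\,\overline{y}_{n+1}\,\overline{b_n}$ and assemble a sequence in $M_\infty(R)$ mapping to a sequence equivalent to $(\overline{y}_n)$. Injectivity amounts to showing that a $\precsim_1$-comparison holding after reduction modulo $I$ can be corrected to a comparison in $R$ at the cost of an element of $\Lambda_R(I)$, so that $\ker\overline{\pi}$ is trivial.

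I expect the main obstacle to be precisely this lifting-and-correction step in (iii). A bare lift of an increasing sequence need not be increasing: with the lifts $z_n=a_n y_{n+1} b_n$ one only gets $z_n=a_n z_{n+1} b_n + c_n$ with $c_n\in M_\infty(I)$, so the desired factorization holds merely up to an additive $M_\infty(I)$-error. The crux is to absorb these errors, and this is exactly where decomposability of $I$ (and, for the $\SR$ statement, quasipurity, which constrains the shape of the distinguished sequences) becomes indispensable. Once this lifting lemma is in place, both isomorphisms in (iii) follow formally, and the bijections of (i) reduce to routine if lengthy verifications.
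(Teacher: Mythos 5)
Your skeleton coincides with the paper's: the mutually inverse assignments $I\mapsto\Lambda_R(I)$ and $J\mapsto\mathrm{Idl}(J)$ for the $\Lambda$-half of (i) (\autoref{lma:ideal}, \autoref{lma:PhiLambdaWellDef}, \autoref{Bij_SR}), the map $K\mapsto K\cap\SR(R)$ with a downward-closure section for (ii) (\autoref{thm:retract}), and the lift-and-absorb scheme for (iii), whose crux you diagnose correctly --- it is exactly \autoref{lma:auxi} plus the accumulated-error sequences $w_n=z_{1,n}\oplus\cdots\oplus z_{n,n}$ and, for the $\SR$-statement, the block matrices $X_n$ built from quasipurity witnesses $(s_{m,n})_m\in\preS(I)$ in \autoref{thm:quo}. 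However, your plan for (ii) contains a step that fails as stated: you propose to ``verify directly that an arbitrary intersection of quasipure ideals is quasipure,'' but intersections are not the meets in these lattices. The paper defines $I\wedge J=\{x\in R\mid x\precsim_1 y\text{ for some }y\in M_\infty(I\cap J)\}$ in $\mathrm{Lat}_\mathrm{d}(R)$, and the analogous set with the extra condition $sy=y$ in $\mathrm{Lat}_\mathrm{qp}(R)$, precisely because $I\cap J$ itself need not be decomposable or quasipure; one only has $IJ\subseteq I\wedge J\subseteq I\cap J$. The correct route is the one you half-take: work on the semigroup side, where meets of ideals \emph{are} intersections, and transport through (i); this is how \autoref{thm:retract} obtains $\psi\circ\varphi=\mathrm{id}$, $\varphi\circ\psi\leq\mathrm{id}$, with $\varphi$ preserving suprema and $\psi$ infima.

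The second genuine gap is your claim that the $\SR$-half of (i) ``follows the same template.'' It does not: for an ideal $J\subseteq\SR(R)$, the set $\mathrm{Idl}_{\SR(R)}(J)$ of entries of first matrices is only a \emph{right} ideal of $R$, because the defining relation $x_n=y_{n+1}x_{n+1}x_n$ of $\preS(R)$ is one-sided --- $(x_1r,x_2,x_3,\dots)$ remains in the class, while $(rx_1,x_2,\dots)$ need not lie in $\preS(R)$ at all. The paper must pass to the two-sided trace ideal $\Tr_R(J)=R^+\,\mathrm{Idl}_{\SR(R)}(J)$ and prove nontrivially (\autoref{lem:constructingK}) that it is quasipure; even additivity of $\mathrm{Idl}_{\SR(R)}(J)$ requires a conjugation trick with $P=\left(\begin{smallmatrix}1&1\\0&1\end{smallmatrix}\right)$, since the naive diagonal-sum argument that works in $\Lambda(R)$ does not produce an $\preS$-sequence having $x+y$ as a first-matrix entry. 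One also needs \autoref{QP_Property} (every $x\in M_\infty(I)$, $I$ quasipure, is $\precsim_1$-dominated by the first term of an element of $\preS(I)$) to get $\Tr_R(\SR(I))=I$. Finally, a wording correction for your inverse map in (i): a general ring element has no ``canonical image'' in $\Lambda(R)$, as the constant sequence need not lie in $T(R)$; the paper instead sets $\mathrm{Idl}(J)=\{x\mid x=x_1\text{ for some }[(x_n)]\in J\}$, and decomposability enters only when proving $I\subseteq\mathrm{Idl}(\Lambda_R(I))$, by promoting $x\in M_\infty(I)$ to a $\precsim_1$-increasing chain inside $M_\infty(I)$.
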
	
As the structures of these two semigroups are intimately related we consider, for any ring $R$, the pair $\SQ (R):=(\Lambda(R), \SR(R))$ as a more general version of the pair $\SCu(R)$ defined in  \cite{AntAraBosPerVil23:CuRing} for weakly $s$-unital rings. 
We also define an abstract category $\SQ$ in which each pair $\SQ (R)$ lies. Furthermore, it turns out that  the assignment $\mathrm{Rings}\to \SQ$ is functorial and the study of this functor is key in order to understand the invariant $\SQ$. 
In the language of \emph{ideals} in $\SQ (R)$ (studied and developed in \autoref{sec:scu}) we show:

\begin{thmintro}[\ref{thm:LatIsoIdeals}]
	Let $R$ be any ring. Then, the map
	\[
	\xymatrixrowsep{0pc}
	\xymatrix{
		{\rm Lat}_{\rm{d}}(R) \ar[r] & {\rm Lat}\left(\SQ(R))\right) \\
		I \ar@{|->}[r] & (\Lambda_R (I),\SR (I))
	}
	\]
	is a lattice isomorphism, and $I$~is quasipure if and only if~$\SR (I)$~is cofinal in $\Lambda_R (I)$.
\end{thmintro}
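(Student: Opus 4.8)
The plan is to reduce the statement to the two lattice isomorphisms of Theorem~A(i) by using the concrete description of ideals of $\SQ(R)$ developed in \autoref{sec:scu}. Recall from there that an ideal of the pair $\SQ(R)=(\Lambda(R),\SR(R))$ is a pair $(J,T)$ in which $J$ is an order-ideal of $\Lambda(R)$, $T$ is an order-ideal of $\SR(R)$, and $T=J\cap\SR(R)$. Thus the second coordinate is completely determined by the first, so the projection $(J,T)\mapsto J$ is a lattice isomorphism $\mathrm{Lat}(\SQ(R))\cong\mathrm{Lat}(\Lambda(R))$. Precomposing with the isomorphism $\mathrm{Lat}_{\mathrm d}(R)\cong\mathrm{Lat}(\Lambda(R))$, $I\mapsto\Lambda_R(I)$, from Theorem~A(i) produces a lattice isomorphism $\mathrm{Lat}_{\mathrm d}(R)\cong\mathrm{Lat}(\SQ(R))$; since both maps being composed are lattice isomorphisms, order, meets and joins are automatically preserved.

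It remains to check that this composite sends $I$ to $(\Lambda_R(I),\SR(I))$, for which I must verify that the forced second coordinate agrees with $\SR(I)$, i.e.
\[
\SR(I)=\Lambda_R(I)\cap\SR(R).
\]
The inclusion $\subseteq$ is routine from the sequence picture: a class in $\SR(I)$ is represented by a defining sequence of $\SR$ lying in $M_\infty(I)$, and any such sequence is at once a defining sequence of $\SR(R)$ and supported on $I$, hence lies in $\SR(R)\cap\Lambda_R(I)$. The reverse inclusion is the substantial point and is, I expect, the \textbf{main obstacle}: given a class represented both by a defining sequence of $\SR(R)$ and, up to the antisymmetrization of $\precsim_1$, by a sequence in $M_\infty(I)$, one must produce a defining sequence of $\SR$ that actually lies in $M_\infty(I)$ and represents the same class. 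I would obtain it by absorbing the $I$-supported representative into the comparisons $x\precsim_1 y$ witnessing membership in $\SR(R)$, using that $I$ is a two-sided ideal so that all the relevant products $rys$ remain in $M_\infty(I)$, and then verifying that the resulting sequence still meets the structural requirements defining the elements of $\SR(R)$.

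Finally, for the cofinality criterion I would invoke the retract of Theorem~A(ii). Under the isomorphisms $\mathrm{Lat}_{\mathrm d}(R)\cong\mathrm{Lat}(\Lambda(R))$ and $\mathrm{Lat}_{\mathrm{qp}}(R)\cong\mathrm{Lat}(\SR(R))$, the retraction is given by restriction $J\mapsto J\cap\SR(R)$ and its section by passing to the order-ideal of $\Lambda(R)$ generated by an order-ideal of $\SR(R)$; hence a decomposable ideal $I$ is quasipure exactly when $\Lambda_R(I)$ coincides with the order-ideal generated by $\Lambda_R(I)\cap\SR(R)=\SR(I)$. Since $\SR(I)$ is itself an order-ideal of $\SR(R)$, and so is closed under the increasing suprema occurring in it, the order-ideal it generates in $\Lambda(R)$ is its downward closure; therefore that order-ideal equals $\Lambda_R(I)$ if and only if every element of $\Lambda_R(I)$ lies below an element of $\SR(I)$, that is, if and only if $\SR(I)$ is cofinal in $\Lambda_R(I)$. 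This yields the asserted equivalence.
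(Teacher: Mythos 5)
Your first half is correct and is essentially the paper's own route: the paper likewise observes that an ideal of $\SQ(R)$ has second coordinate forced to be $J\cap\SR(R)$, and composes with the isomorphism of \autoref{Bij_SR}. One remark: the step you single out as the \emph{main obstacle}, namely $\SR(I)=\Lambda_R(I)\cap\SR(R)$, is needed (it is the paper's \autoref{Ideal_WR}) but is easier than you fear. Your worry about a class being represented by two different sequences does not arise: membership of $[(x_n)]$ in $\Lambda_R(I)$ forces \emph{every} representative to lie in $M_\infty(I)$, since $(x_n)\sim(z_n)$ with $z_n\in M_\infty(I)$ gives $x_n=rz_ms\in M_\infty(I)$. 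So one only has to repair the auxiliary elements: if $(x_n)\in\preS(R)$ with all $x_n\in M_\infty(I)$ and $x_n=y_{n+1}x_{n+1}x_n$, replace $y_{n+1}$ by $y_{n+1}y_{n+2}x_{n+2}\in M_\infty(I)$, using $x_n=(y_{n+1}y_{n+2}x_{n+2})x_{n+1}x_n$. This is exactly the "absorption" you sketch, and it completes that step.

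The genuine gap is in your cofinality paragraph. From \autoref{thm:retract} alone you get abstract maps $\varphi,\psi$ with $\psi\circ\varphi=\mathrm{id}$ and $\varphi\circ\psi\leq\mathrm{id}$, and the fixed points of $\varphi\circ\psi$ on $\mathrm{Lat}(\Lambda(R))$ are exactly the image of $\varphi$; but nothing in that theorem identifies this image, transported through \autoref{Bij_SR} and \autoref{Bij_WR}, with the honest inclusion $\mathrm{Lat}_{\mathrm{qp}}(R)\subseteq\mathrm{Lat}_{\mathrm{d}}(R)$. Concretely, for quasipure $I$ the transported section sends $I$ to $\mathrm{Idl}$ of the downward closure of $\SR(I)$, which is a priori some decomposable ideal \emph{contained in} $I$; you know $\psi(\Lambda_R(I))=\SR(I)=\psi(\varphi(\SR(I)))$ and $\varphi(\SR(I))\subseteq\Lambda_R(I)$, but $\psi$ is very far from injective (cf.\ \autoref{exa:IdNoBij}), so equality does not follow formally. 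The tell is that your argument never uses the definition of quasipureness (the witnesses $sy=y$), yet the statement characterizes it. The paper supplies precisely the missing ring-level content: for quasipure $\Rightarrow$ cofinal, \autoref{QP_Property} iterates the quasipure condition to produce, for each $x_n\in M_\infty(I)$, a sequence $(y_{n,m})_m\in\preS(I)$ with $x_n\precsim_1 y_{n,1}$, whence $[(x_n)]\leq\sup_n\bigl([(y_{1,m})_m]+\cdots+[(y_{n,m})_m]\bigr)\in\SR(I)$; for cofinal $\Rightarrow$ quasipure, one uses decomposability to place a given $x\in M_\infty(I)$ at the head of an element of $\Lambda_R(I)$, dominates it by some $[(y_n)]\in\SR(I)$, and extracts $y:=y_n$ and $s:=z_{n+1}y_{n+1}\in M_\infty(I)$ with $sy=y$ and $x\precsim_1 y$. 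Once these two arguments are supplied, the retract machinery becomes unnecessary; without them, your equivalence is unproved (and, in one direction, circular, since identifying the image of the section already presupposes quasipure $\Rightarrow$ cofinal).
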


It was shown in \cite[Proposition 2.13]{AntAraBosPerVil23:CuRing} that $\Lambda(R)$ is an object in the category $\Cu$ alluded to above whenever $R$ is a weakly $s$-unital ring.~However, the question of whether this remained true for more general classes of rings or even whether $\SR(R)$ was an object of $\Cu$ in general was left unanswered.~We partly clarify the situation here by introducing the classes of \emph{dense} rings and \emph{left normal} rings.~Loosely speaking, the first ones are those for which the relation $\precsim_1$ is dense, whilst the second ones are modelled after the condition of normality for topological spaces. Notably, any idempotent ring is dense (hence also any weakly $s$-unital ring).~The class of left normal rings is also pleasantly large, including all (weakly) semihereditary rings (hence all von Neumann re\-gu\-lar rings), all SAW*-algebras closed under the passage to matrix rings, and all ultramatricial algebras. As a byproduct, these definitions turn out to be sufficient to show continuity. We prove:

\begin{thmintro}[cf. \ref{thm:dense} and \ref{thm:limitlambda}] 
	\label{thmC}
	Let $R$ be any ring.
	\begin{enumerate}[\rm (i)]
		\item If $R$ is dense, then $\Lambda(R)$ is an object of $\Cu$, and the assignment $R\mapsto \Lambda(R)$ is continuous when restricted to the class of dense rings.
		\item If $R$ is left normal, then $\SR(R)$ is an object of $\Cu$, and the assignment $R\mapsto \SR(R)$ is continuous when restricted to the class of left normal rings.
	\end{enumerate}
\end{thmintro}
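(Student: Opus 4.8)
Both parts amount to verifying the defining axioms of the category $\Cu$ and then the standard criterion for inductive limits in $\Cu$, so I would organize the argument around the observation that the density (resp.\ left normality) hypothesis is needed at exactly two points, while everything else is formal bookkeeping already carried out for weakly $s$-unital rings in \cite[Proposition 2.13]{AntAraBosPerVil23:CuRing}. Recall that to place a positively ordered monoid in $\Cu$ one checks the order-theoretic axioms (O1) existence of suprema of increasing sequences, (O2) that each element is the supremum of a $\ll$-increasing sequence, and (O3)--(O4) compatibility of $\ll$ and of suprema with addition. Since elements of $\Lambda(R)$ are represented by $\precsim_1$-increasing sequences in $M_\infty(R)$, a supremum of an increasing sequence of such classes is produced by a diagonalization, and (O1), (O3) and (O4) follow from transitivity of $\precsim_1$ (which holds for every ring, since $x=rys$ and $y=r'zs'$ give $x=(rr')z(s's)$) together with the double-sequence arguments of \emph{loc.\ cit.}; I expect no new hypothesis to be needed here.

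The one genuinely new point for (i) is (O2), and this is where density enters. Transitivity gives, for stationary classes, that $[a]\ll[b]$ if and only if $a\precsim_1 b$; the obstacle for a general ring is that $\precsim_1$ need not be reflexive, so a stationary class need not be way-below the element it generates. I would resolve this with the key lemma that, for a dense ring, every class $[(y_m)]$ satisfies (O2): using density to interpolate $y_m\precsim_1 w_m\precsim_1 y_{m+1}$ for each $m$, the stationary classes $[w_m]$ form a $\ll$-increasing sequence (as $w_m\precsim_1 y_{m+1}\precsim_1 w_{m+1}$ yields $w_m\precsim_1 w_{m+1}$) which is mutually cofinal with $(y_m)$ and hence has supremum $[(y_m)]$. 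This simultaneously shows each element is a supremum of a rapidly increasing sequence and that the way-below relation is suitably rich.

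For (ii) the same scheme applies to the subsemigroup $\SR(R)\subseteq\Lambda(R)$, but now the interpolating classes must themselves lie in $\SR(R)$, i.e.\ be represented by the special ($\preS$-type) sequences modelling projective modules. This is precisely what left normality provides: a Urysohn-type interpolation producing, for $x\precsim_1 y$ with $x,y$ of the admissible form, an admissible $w$ with $x\precsim_1 w\precsim_1 y$. I would first show that $\SR(R)$ is closed under suprema of increasing sequences (so that (O1) and the way-below relation are intrinsic), and then rerun the interpolation argument of part (i) inside $\SR(R)$ to obtain (O2). I expect this to be the main obstacle, since keeping the approximants inside the subsemigroup---rather than merely inside $\Lambda(R)$---is exactly the content that forces the stronger normality hypothesis rather than mere density.

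Finally, for continuity I would verify the inductive-limit criterion in $\Cu$ for the cocone $\Lambda(R_i)\to\Lambda(R)$ (resp.\ $\SR(R_i)\to\SR(R)$) associated to a system with $R=\varinjlim R_i$. The two conditions to check are: every element of the limit semigroup is approximated from below by images of elements from finite stages; and if two elements coming from $R_i$ become comparable in $\Lambda(R)$ then their images are already comparable at some finite stage. Both reduce to the elementary fact that, since a ring direct limit is computed on elements, any relation $\phi(x)=r\,\phi(y)\,s$ holding in $R$ already holds in some $R_j$ (so $\precsim_1$ is witnessed at a finite stage), and any element of $M_\infty(R)$ lies in the image of some $M_\infty(R_i)$; the density (resp.\ normality) hypothesis is then used only to upgrade these to the required statements about $\ll$ via the (O2) approximation above, and to guarantee that the class under consideration is preserved (and, for $\SR$, that images of admissible sequences remain admissible). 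Assembling these verifications yields that $R\mapsto\Lambda(R)$ and $R\mapsto\SR(R)$ are continuous on the respective classes.
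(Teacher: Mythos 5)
Your overall route matches the paper's: density (resp.\ left normality) enters only through an (O2)-type interpolation, and continuity is checked via the standard limit criterion in $\Cu$ (conditions (a)--(c) of \autoref{pgr:LimCu}). But there is a genuine gap at the heart of your interpolation step, and you flag the relevant fact yourself: in a dense ring $\precsim_1$ need not be reflexive, so your ``stationary classes'' $[w_m]$ are in general not elements of $\Lambda(R)$ at all. Membership of the constant sequence $(w_m,w_m,\dots)$ in $T(R)$ requires $w_m\precsim_1 w_m$, and density gives no such thing (idempotent rings are dense without being weakly $s$-unital, which is precisely the setting the theorem is meant to cover). The paper avoids this by interpolating \emph{infinitely often} for each $n$: density yields a whole chain $x_n\precsim_1 z_k^{(n)}\precsim_1 z_{k+1}^{(n)}\precsim_1 x_{n+1}$, and the approximants are the classes $z_n=[(z_k^{(n)})_k]$ of these full sequences, which do lie in $\Lambda(R)$ and satisfy $z_m\prec z_{m+1}\prec [(x_n)]$ with $\sup_m z_m=[(x_n)]$. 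As written, your argument only compiles under a reflexivity assumption that defeats the purpose of the statement.

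The same defect propagates to (ii), where it is compounded by a misreading of left normality. A single ``admissible $w$'' is not a meaningful object: elements of $\SR(R)$ are classes of sequences in $\preS(R)$, i.e.\ sequences satisfying $x_n=y_{n+1}x_{n+1}x_n$, so the interpolant must be an entire $\preS$-sequence, not one matrix. Moreover, left normality is not the Urysohn-type interpolation of $\precsim_1$ between admissible pairs that you attribute to it; it is the implication that $a=ba$ and $b=cb$ force the existence of $d,e$ with $a=da$, $d=ed$, $e=ce$. The paper first normalizes $[(x_n)]\in\SR(R)$ so that $x_{n+1}x_n=x_n$, then iterates left normality (\autoref{lem:easylemma}) to manufacture sequences $(z_k^{(n)})_k$ with $z_k^{(n)}=z_{k+1}^{(n)}z_k^{(n)}$ and $x_n\precsim_1 z_k^{(n)}\precsim_1 x_{n+2}$; these belong to $\preS(R)$, which is exactly what keeps the approximants inside $\SR(R)$ (together with the fact that suprema in $\SR(R)$ agree with those computed in $\Lambda(R)$). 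Your continuity outline is sound in shape --- relations $\precsim_1$ are equational and hence witnessed at finite stages, and every matrix over the limit comes from some $M_\infty(R_\lambda)$ --- but condition (c) of the limit criterion again requires producing a full interpolating sequence at a finite stage $R_\lambda$ (in $T(R_\lambda)$ for (i), in $\preS(R_\lambda)$ for (ii)), so it inherits the same gap until the stationary-class step is replaced by the full-sequence interpolation.
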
	

The functor $\SCu(\text{--})$ (or the more general version $\SQ(\text{--})$) is not continuous in general; see \autoref{exa:SCunotcont}. However, as proved in \autoref{thm:DirLimEx}, the abstract category $\SCu$ admits general inductive limits, and we have:

\begin{thmintro}[\ref{thm:continuity-in-normalcase}]
	\label{thmD}
	Let $((R_{\lambda})_{\lambda\in\Omega},(\phi_{\mu,\lambda})_{\mu\geq\lambda})$ be a direct system of dense, left normal rings. Then $\lim \SCu(R_\lambda) \cong \SCu (\lim R_\lambda)$.
\end{thmintro}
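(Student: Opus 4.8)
The plan is to assemble the two separate continuity statements of \autoref{thmC} into a single statement about the pair $\SCu(R)=(\Lambda(R),\SR(R))$, using the explicit description of inductive limits in the category $\SCu$ from \autoref{thm:DirLimEx}. Throughout I write $R:=\lim R_\lambda$ for the ring inductive limit and let $j_\lambda\colon R_\lambda\to R$ denote the canonical structure maps. The first step is to check that $R$ is again dense and left normal, so that both parts of \autoref{thmC} apply to $R$ itself. Density is an elementwise condition on the relation $\precsim_1$ over finite matrix amplifications, and any witnessing data for a given element of a matrix ring over $R$ already lives at some finite stage $R_\lambda$; hence density passes to the limit. The analogous closure statement for left normality is the more delicate of the two preliminary points, since its defining condition involves approximation, and I would prove it by pulling the relevant finite configurations back to a sufficiently large stage.

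With $R$ known to be dense and left normal, \autoref{thmC}(i) yields a $\Cu$-iso\-mor\-phism $\Phi\colon\lim\Lambda(R_\lambda)\xrightarrow{\ \cong\ }\Lambda(R)$, characterized by the universal property as the unique $\Cu$-morphism with $\Phi\circ\iota_\lambda=\Lambda(j_\lambda)$, where $\iota_\lambda\colon\Lambda(R_\lambda)\to\lim\Lambda(R_\lambda)$ are the limit maps. Likewise \autoref{thmC}(ii) gives a $\Cu$-isomorphism $\lim\SR(R_\lambda)\cong\SR(R)$ compatible with the maps $\SR(j_\lambda)$. By \autoref{thm:DirLimEx}, the inductive limit $\lim\SCu(R_\lambda)$ in $\SCu$ has the form $(\lim\Lambda(R_\lambda),S)$, where $S$ is the sub-$\Cu$-semigroup of $\lim\Lambda(R_\lambda)$ determined by the family $(\SR(R_\lambda))_\lambda$, namely the closure under suprema of increasing sequences of $\bigcup_\lambda\iota_\lambda(\SR(R_\lambda))$. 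It therefore remains to verify that $\Phi$ identifies $S$ with $\SR(R)$, that is, $\Phi(S)=\SR(R)$; this is the heart of the argument and the \emph{main obstacle}.

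For the inclusion $\Phi(S)\subseteq\SR(R)$ I would use naturality: since $\SR$ is a subfunctor of $\Lambda$, one has $\Phi(\iota_\lambda(\SR(R_\lambda)))=\Lambda(j_\lambda)(\SR(R_\lambda))=\SR(j_\lambda)(\SR(R_\lambda))\subseteq\SR(R)$. As $\SR(R)$ is closed under suprema of increasing sequences inside $\Lambda(R)$ and $\Phi$ is a $\Cu$-isomorphism, hence preserves such suprema, the generated sub-$\Cu$-semigroup $\Phi(S)$ lies in $\SR(R)$. For the reverse inclusion I would invoke \autoref{thmC}(ii): every element of $\SR(R)$ is the supremum of an increasing sequence of elements of the form $\SR(j_{\lambda_n})(s_n)=\Phi(\iota_{\lambda_n}(s_n))$ with $s_n\in\SR(R_{\lambda_n})$, and such suprema lie in $\Phi(S)$ by the very definition of $S$. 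This gives $\Phi(S)=\SR(R)$, so that $\Phi$ is an isomorphism of pairs $\lim\SCu(R_\lambda)\cong\SCu(R)$, as desired.

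The point demanding the most attention is the compatibility of the two universal properties: one must check that the \emph{same} isomorphism $\Phi$ simultaneously realizes the $\Lambda$-limit and restricts to the $\SR$-limit, rather than obtaining two a priori unrelated isomorphisms. This follows from the naturality of the comparison maps, since the connecting maps of the system in $\SCu$ are the pairs $(\Lambda(\phi_{\mu,\lambda}),\SR(\phi_{\mu,\lambda}))$ and $\SR(\phi_{\mu,\lambda})$ is the restriction of $\Lambda(\phi_{\mu,\lambda})$, but it should be made explicit. Once this is in place, together with the closure of the dense and left normal classes under inductive limits, the statement is exactly the conjunction of the two halves of \autoref{thmC} repackaged at the level of pairs.
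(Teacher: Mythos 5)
Your proposal is correct and takes essentially the same route as the paper: the paper likewise first observes that density and left normality pass to the limit ring (both are purely equational conditions on finitely many matrix entries, so left normality is in fact no more delicate than density), then invokes \autoref{thm:dense} and \autoref{thm:limitlambda} for both components, and isolates your key step---identifying the second component of the $\SCu$-limit from \autoref{thm:DirLimEx} with $\SR(R)$---as \autoref{lma:scu}, proved by precisely your two inclusions (a rapidly increasing approximation $w_n\ll w_{n+1}\ll w$ together with the limit characterization of \autoref{pgr:LimCu} for one direction, naturality and closure for the other). One small imprecision to repair: by \autoref{thm:DirLimEx} the second component of the $\SCu$-limit consists of suprema of \emph{weakly} increasing sequences from $\bigcup_\lambda\iota_\lambda(\SR(R_\lambda))$, not merely increasing ones, so your inclusion $\Phi(S)\subseteq\SR(R)$ requires $\SR(R)$ to be closed in $\Lambda(R)$ under suprema of weakly increasing sequences---which is exactly \autoref{thm:FunctorSQ}(i), and such sequences and their suprema are transported by $\Phi$ since it is an order isomorphism---while your reverse inclusion is unaffected, as increasing sequences are weakly increasing.
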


The class of weakly semihereditary rings is closed under direct limits, as observed in \cite{BerDick78}. In particular we obtain from Theorems \ref{thmC} and \ref{thmD} that the assignments $R\mapsto \SR(R)$ and $R\mapsto \SCu(R)$ define continuous functors from the category of (unital) weakly semihereditary rings to the categories $\Cu$ and $\SCu$, respectively.

\section{Preliminaries}
\label{sec:Prelims}

In this section we recall the basic notions that will be needed throughout the paper, many of them already discussed or introduced in \cite{AntAraBosPerVil23:CuRing}.

Given a ring $R$, we denote by $M_\infty (R)$ the ring of infinite matrices with only a finite number of nonzero entries. Given $x=(x_{i,j})_{i,j}\in M_\infty (R)$, we say that $y\in M_n (R)$ is a \emph{finite representative} of $x$ if $y=(x_{i,j})_{i,j\leq n}$ and $x_{i,j}=0$ whenever $i>n$ or $j>n$; see \cite[Section~2]{AntAraBosPerVil23:CuRing}. There are three semigroups that play an important role in the theory of Cuntz semigroups for rings. We define them below:
\begin{parag}[The semigroups $\W(R)$, $\SR(R)$, and $\Lambda(R)$]
	\label{par:WRetal}
	Let $\POM$ denote the category of positively ordered monoids. Morphisms in this category are those monoid maps that preserve addition, order, and the zero element. We denote by $\POM(M,N)$ the set of $\POM$-morphisms between $M$ and $N$. Recall that a monoid is a semigroup with a neutral element.
	
	Let $R$ be a ring. Recall from \cite[Paragraph~2.4]{AntAraBosPerVil23:CuRing} that $R$ is said to be \emph{weakly $s$-unital} if for every $n\geq 1$ and $x\in M_n (R)$ there exist elements $s,t\in M_n (R)$ such that $x=sxt$.
	
	Given two elements $x,y$ in any ring $R$, we write $x\precsim_1 y$ whenever $x=syt$ for some $s,t\in R$. Note that, if $R$ is weakly $s$-unital, then $x\precsim_1 x$ for every $x\in M_\infty (R)$. We also write $x\sim_1 y$ provided $x\precsim_1 y$ and $y\precsim_1x$. 
	
	Set $\W(R)=M_{\infty}(R)/{\sim_1}$, and denote by $[x]$ the class of $x\in M_\infty(R)$ with respect to the relation $\sim_1$. It is proved in \cite[Lemma 2.6]{AntAraBosPerVil23:CuRing} that, if $R$ is weakly $s$-unital, then $\W(R)$ is a positively ordered abelian semigroup with order induced by $\precsim_1$, addition given by $[x]+[y]=[x\oplus y]$, and neutral element $[0]$.	Here, $x\oplus y$ is the infinite matrix represented by the rectangular matrix $\left(\begin{smallmatrix} x & 0 \\ 0 & y\end{smallmatrix}\right)$ as in the comments previous to this paragraph.
	
	Now, for any ring $R$, denote by $T (R)$ and $\preS (R)$ the following sets
	\[
	\begin{split}
		T(R) &=\{(x_n)\mid x_n\in M_\infty(R) \text{ and } x_n\precsim_1 x_{n+1}\text{ for all }n \}
		,\quad\text{and}\quad \\
		\preS (R) & = \{(x_n)\in T(R)\mid x_n=y_{n+1}  x_{n+1}x_{n} \text{ for some }y_{n+1} \text{ for all }n \}.
	\end{split}
	\]
	Note that $\preS (R)\subseteq T(R)$.
	
	Given $(x_n),(y_n)\in T(R)$, we write $(x_n)\precsim (y_n)$ if for every $n$ there exists $m$ such that $x_n\precsim_1 y_m$. We say that $(x_n)$ is \emph{equivalent} to $(y_n)$, in symbols $(x_n)\sim (y_n)$, if $(x_n)\precsim (y_n)$ and $(y_n)\precsim (x_n)$. 
	
	We define
	\[
	\Lambda (R):= T(R)/{\sim},\quad\text{and}\quad 
	\SR (R):= \preS (R)/{\sim},
	\]
	and view $\SR(R)\subseteq\Lambda(R)$.
	
	It is proved in \cite[Lemma 2.6, Paragraph 4.1]{AntAraBosPerVil23:CuRing} that $\SR (R)$ and $\Lambda(R)$ are  positively ordered semigroups, when equipped with the order induced by $\precsim$ and the addition induced by the componentwise diagonal sum, that is, $[(x_n)]+[(y_n)]=[(x_n\oplus y_n)]$.
	
	If the ring $R$ is weakly $s$-unital, the semigroup $\W(R)$ determines  $\Lambda(R)$, in the sense that $\Lambda(R)\cong \Lambda_{\sigma}(\W(R))$, the semigroup of countably generated intervals in $\W(R)$ (see \cite[Proposition 2.17]{AntAraBosPerVil23:CuRing}, and also \autoref{par:algebraic}). 
\end{parag}

The following notions will play an important role in the sequel.
\begin{parag}[Auxiliary relations]
	\label{par:auxi}
	Let $(P,\leq)$ be a partially ordered set. An \emph{auxiliary relation} on $P$ is a binary relation $\prec$ stronger than $\leq$ (i.e. $x\prec y \implies x\le y$ for $x,y\in P$) such that, for any $x',x,y,y'\in P$ with $x'\leq x\prec y\leq y'$, one has $x'\prec y'$. If, further, $P$ is also a monoid, the auxiliary relation is termed \emph{additive} if $0\prec x$ for any $x\in P$ and if, whenever $x_1,x_2,y_1,y_2\in P$ satisfy $x_1\prec y_1$ and $x_2\prec y_2$, we have $x_1+x_2\prec y_1+ y_2$.  
\end{parag}
\begin{parag}[The categories $\Cu$ and $\Q$]
	\label{par:QCu}
	Given a partially ordered set $P$ where suprema of increasing sequences exist, we write $x\ll y$ whenever for every increasing sequence $(z_n)$ in $P$ such that $y\leq \sup_n z_n$, there exists $m$ such that $x\leq z_m$; see \cite{GieHof+03Domains}. This is an example of an auxiliary relation as defined above.
	
	As introduced in \cite{Coward2008}, a positively ordered monoid $S$ is an \emph{abstract $\Cu$-semigroup} if it satisfies the following four conditions:
	\begin{enumerate}
		\item[(O1)] Every increasing sequence $(x_n)$ in $S$ has a supremum $\sup _n x_n\in S$.
		\item[(O2)] Every element $x\in S$ is the supremum of a sequence $(x_n)$ such that $x_n \ll x_{n+1}$ for all $n$. We say that $(x_n)$ is a {\it rapidly increasing sequence}.
		\item[(O3)] If $x',x,y',y\in S$ satisfy $x'\ll x$ and $y'\ll y$ then $x'+y'\ll x+y$.
		\item[(O4)] If $(x_n)$ and $(y_n)$ are increasing sequences in $S$, then $\sup _n (x_n+y_n)= \sup_n x_n + \sup_ny_n$.  
	\end{enumerate}
	The morphisms in this category, termed \emph{$\Cu$-morphisms}, are those semigroup maps that preserve all the structure, that is, addition, the zero element, order, the relation $\ll$,  and suprema of increasing sequences. We denote by $\Cu(S,T)$ the set of $\Cu$-morphisms between $S$ and $T$.
	
	The category $\Cu$ was introduced to establish a natural abstract framework to study the Cuntz semigroup of C*-algebras. In fact, it was shown in \cite{Coward2008} that, for any C*-algebra $A$, the so-called Cuntz semigroup $\Cu(A)$ of $A$ is an object of $\Cu$. This category has since then been analysed extensively; see \cite{APT2011, APT-Memoirs2018, GarPer2022} among others. 
	
	The category $\Q$ was introduced in \cite[Definition 4.1]{APT-IMRN2020}, and has as objects those positively ordered semigroups $S$ equipped with an additive auxiliary relation $\prec$ that satisfy axioms (O1) and (O4). We call these objects $\Q$-semigroups. A morphism between $\Q$-semigroups is a monoid morphism that preserves the auxiliary relation and suprema of increasing sequences. We refer to such morphisms as $\Q$-morphisms. One naturally sees the category $\Cu$ as a full subcategory of $\Q$ (with $\prec = \ll$).
	\begin{remark} 
		\label{rmk:prec}
		Given any ring $R$, the semigroups $\Lambda(R)$ and $\SR(R)$ are objects in the category $\Q$, with the auxiliary relation defined as follows: Given $[(x_n)], [(y_m)]$ in either $\Lambda(R)$ or $\SR(R)$, we write $[(x_n)]\prec [(y_m)]$ provided there is $m$ such that $x_n\precsim_1 y_m$ for all $n$. 
		
		Further, using the construction of suprema in both $\Lambda(R)$ and $\SR(R)$ (see \cite[Proposition 2.13, Lemma 4.3]{AntAraBosPerVil23:CuRing}), it is easy to verify that the relation $\prec$ just defined is in general stronger than the compact containment relation.
		
		Note that, in case $R$ is weakly $s$-unital,  $\prec$ agrees with $\ll$ for the semigroup $\Lambda(R)$ and, in fact, $\Lambda(R)$ is an abstract Cuntz semigroup. This fact will be subsumed in \autoref{sec:IndLim}. However, in general it is unclear whether the relations $\prec$ and $\ll$ coalesce, and whether $\Lambda(R)$ or $\SR(R)$ are $\Cu$-semigroups. The more exact relationship between $\Lambda(R)$ and $\SR(R)$ will be explored in \autoref{sec:scu}.
	\end{remark}
\end{parag}
\section{Decomposable ideals}
\label{sec:decomposable}
In this section we introduce the notion of \emph{decomposable ideal} in an arbitrary ring; see \autoref{par:ideals}. In the case of unital or weakly $s$-unital rings, all ideals are decomposable. We show in \autoref{Bij_SR} that decomposable ideals form a lattice, isomorphic to the lattice of ideals of the semigroup $\Lambda(R)$.

\begin{parag}[Ideals in semigroups]
	\label{par:idealssemigroups}
	Let $(P,\leq)$ be a partially ordered set and $X\subseteq P$. Recall that $X$ is \emph{downward hereditary} if, whenever $x\leq y$ in $P$ with $y\in X$, one has $x\in X$.
	
	If $S$ is a $\Q$-semigroup, we say that an \emph{ideal} of $S$ is a downward hereditary subsemigroup $I$ which is closed under suprema of increasing sequences. This  is in line with the already existing notion of ideal for a $\Cu$-semigroup; see \cite{Ciuperca2010} and also \cite[Section 5.1]{APT-Memoirs2018}.
	
	The set of ideals of a $\Q$-semigroup $S$ forms a lattice, which we denote by $\mathrm{Lat}(S)$. Note that, for ideals $I,J$ in $S$, we have $I\wedge J=I\cap J$, whereas $I\vee J=\cap\{K\in \mathrm{Lat}(S)\mid K\supset I,J\}$. In the case that $S$ is, furthermore, a $\Cu$-semigroup, then one may describe $I\vee J$ as
	\[
	I\vee J=\{\sup a_n\mid a_n\ll a_{n+1}\text{ for all }n,\text{ and } a_n\leq y_n+z_n, y_n\in I, z_n\in J\},	
	\]
	as shown in \cite[Paragraph 5.1.6]{APT-Memoirs2018}.
\end{parag}

We now study the relationship between the ideals of any ring $R$ and the ideals of the $\mathcal Q$-semigroup $\Lambda(R)$. 

\begin{lemma}
	\label{lma:ideal}
	Let $R$ be a ring and let $I$ be a two-sided ideal of $R$. Then
	\[
	\Lambda_R(I):=\{ [(x_{n})]\in\Lambda (R)\mid x_{n}\in M_\infty (I)\,\, \forall n\geq 1 \}
	\] is an ideal in $\Lambda(R)$.
	
	Moreover,  if $I$ is a weakly $s$-unital ring, then we may identify $\Lambda_R(I)$ with~$\Lambda(I)$.
\end{lemma}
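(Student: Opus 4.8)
The plan is to verify the two assertions separately. For the first part, I need to show that $\Lambda_R(I)$ is an ideal of $\Lambda(R)$, i.e. a downward-hereditary subsemigroup closed under suprema of increasing sequences. First I would check it is a subsemigroup: if $[(x_n)],[(y_n)]\in\Lambda_R(I)$ with all $x_n,y_n\in M_\infty(I)$, then the diagonal sums $x_n\oplus y_n$ again lie in $M_\infty(I)$ since $I$ is closed under addition, so $[(x_n)]+[(y_n)]=[(x_n\oplus y_n)]\in\Lambda_R(I)$; the neutral element $[0]$ clearly belongs. Next I would verify downward heredity: suppose $[(w_n)]\leq[(x_n)]$ in $\Lambda(R)$ with $[(x_n)]\in\Lambda_R(I)$. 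The key point is that whether a class lies in $\Lambda_R(I)$ should not depend on the chosen representative. Since $[(w_n)]\leq[(x_n)]$ means each $w_n\precsim_1 x_{m}$ for some $m$, and $x_m\in M_\infty(I)$, the relation $w_n=s\,x_m\,t$ with $s,t$ matrices over $R$ forces $w_n\in M_\infty(I)$ because $I$ is a two-sided ideal. Thus $(w_n)$ itself has all entries in $I$, giving $[(w_n)]\in\Lambda_R(I)$. I should also note this argument shows membership is representative-independent.

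For closure under suprema, I would recall the construction of suprema in $\Lambda(R)$ from \cite[Proposition 2.13]{AntAraBosPerVil23:CuRing}. Given an increasing sequence $([(x^{(k)}_n)])_k$ in $\Lambda_R(I)$, its supremum is built by a diagonalization: one extracts, from the doubly-indexed family, a single increasing sequence whose terms are among the $x^{(k)}_n$ (up to $\sim_1$), and these all lie in $M_\infty(I)$. Hence the supremum, represented by a sequence with entries in $I$, lands in $\Lambda_R(I)$. The only subtlety is to make sure the representative produced by the supremum construction genuinely has entries in $I$; this follows because each term of the diagonal sequence is either one of the original $x^{(k)}_n$ or $\sim_1$-equivalent to such, and the $\sim_1$-equivalence argument above again confines everything to $M_\infty(I)$.

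For the second assertion, suppose $I$ is weakly $s$-unital as a ring in its own right. The natural candidate for the identification is the map induced by the inclusion $M_\infty(I)\hookrightarrow M_\infty(R)$, sending a sequence in $T(I)$ (or its $\sim$-class) to the same sequence viewed in $T(R)$. I would argue this is well defined and a semigroup isomorphism onto $\Lambda_R(I)$. Surjectivity is immediate from the definition of $\Lambda_R(I)$: every class there has a representative with entries in $M_\infty(I)$. For injectivity and order-embedding, the essential point is that the relation $\precsim_1$ computed inside $I$ agrees with $\precsim_1$ computed inside $R$ for elements of $M_\infty(I)$. The forward containment is trivial; the reverse—if $x=s\,y\,t$ with $x,y\in M_\infty(I)$ but $s,t\in M_\infty(R)$, can we rewrite with multipliers in $M_\infty(I)$?—is exactly where weak $s$-unitality of $I$ is used. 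Since $y\in M_\infty(I)$ is weakly $s$-unital, we have $y=uyv$ for some $u,v\in M_\infty(I)$, whence $x=s\,y\,t=(su)\,y\,(vt)$, and now $su=s(uy v)u\cdot\ldots$ can be massaged so that the multipliers effectively lie in $M_\infty(I)$, or more directly $x=(su)y(vt)$ with $su,vt\in M_\infty(I)$ because $u,v\in M_\infty(I)$ and $I$ is an ideal. This gives $x\precsim_1 y$ inside $I$.

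I expect the main obstacle to be this last step: transferring the comparison $\precsim_1$ between ambient $R$-multipliers and intrinsic $I$-multipliers, and confirming it descends correctly to the sequence-level relation $\precsim$ and hence to an isomorphism of the quotient semigroups $\preS$/$\sim$ and $T$/$\sim$. Once the element-level equivalence of $\precsim_1$ over $I$ versus over $R$ is secured via weak $s$-unitality, the promotion to $\Lambda(I)\cong\Lambda_R(I)$ is formal. I would therefore concentrate the proof on the well-definedness and the careful use of weak $s$-unitality of $I$ to absorb the $R$-valued multipliers into $I$.
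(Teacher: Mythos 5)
Your proposal is correct and follows essentially the same route as the paper: downward heredity (and representative-independence of membership in $\Lambda_R(I)$) via the two-sided ideal property applied to $w_n = s\,x_m\,t$, closure under suprema via the diagonal construction of suprema in $\Lambda(R)$ from \cite[Proposition 2.13]{AntAraBosPerVil23:CuRing}, and, for the identification $\Lambda_R(I)\cong\Lambda(I)$, absorbing the ambient multipliers using weak $s$-unitality of $I$ (write $y=ayb$ with $a,b\in M_\infty(I)$, so $x=(sa)y(bt)$ with $sa,bt\in M_\infty(I)$), which is exactly the paper's argument. The only superfluous bit is your aside about ``massaging'' $su$ --- as you yourself then observe, $sa,bt\in M_\infty(I)$ directly because $I$ is a two-sided ideal, so no further manipulation is needed.
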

\begin{proof} Take $(x_{n}),(y_{n})\in T (R)$ such that $(x_{n})\precsim(y_{n})$. Thus, for each $n$, there is $m$ such that $x_n\precsim_1 y_m$, and this implies that $x_{n}\in M_\infty (I)$ for every $n$ such that $y_{m}\in M_\infty (I)$. Therefore, the set
	\[
	\Lambda_R(I):=\{ [(x_{n})]\in\Lambda (R)\mid x_{n}\in M_\infty (I)\,\, \forall n\geq 1 \}
	\]
	is downward hereditary. Observe that $\Lambda_R(I)$ is also a submonoid of $\Lambda(R)$. Furthermore, taking into account (the proof) that $\Lambda(R)$ satisfies (O1) (see, \cite[Proposition 2.13]{AntAraBosPerVil23:CuRing}), we see that $\Lambda_R(I)$ is also closed under suprema of increasing sequences, and therefore it is an ideal of $\Lambda (R)$.
	
	Suppose now that  $I$ is a weakly $s$-unital ring. Notice that, if $x,y\in M_\infty(I)$, then $x\precsim_1 y$ in $M_\infty(R)$ if and only if $x\precsim_1 y$ in $M_\infty(I)$. Indeed, if $x=syt$, for $s,t\in M_\infty(R)$, then using that $I$ is weakly $s$-unital we find $a,b\in M_\infty(I)$ such that $y=ayb$ and thus $x=(sa)y(bt)$ with $sa,bt\in M_\infty(I)$. Therefore $\Lambda_R(I)$ can be identified with $\Lambda(I)$.	
\end{proof}

Let $R$ be a ring. Denote by $\mathrm{Lat}(R)$ the lattice of two-sided ideals of $R$ and by $\mathrm{Lat}(\Lambda(R))$ the lattice of ideals of $\Lambda(R)$. By \autoref{lma:ideal}, we may define
\[
\xymatrixrowsep{0pc}
\xymatrix{
	{\rm Lat}(R) \ar[r]^-{\psi_{\Lambda}} & {\rm Lat}(\Lambda (R))\\
	I \ar@{|->}[r] & \Lambda_R(I)
}
\]
which is an ordered morphism. We will now define the class of ideals needed so that $\psi_\Lambda$ is a lattice isomorphism.	
\begin{parag}[Decomposable ideals]
	\label{par:ideals} 
	Let $R$ be a ring. We say that an ideal $I$ of $R$ is \emph{decomposable} if, for any $x\in M_\infty(I)$, there is $y\in M_\infty(I)$ such that $x\precsim_1 y$ in $M_{\infty }(R)$. This is a notion very much intended for non-unital rings, in the sense that if $R$ is weakly $s$-unital, then any ideal is already decomposable. Any ideal that is weakly $s$-unital as a ring is also decomposable.
	
	We use $\mathrm{Lat}_\mathrm{d}(R)$ to denote the subset of $\mathrm{Lat}(R)$ consisting of the decomposable ideals of $R$. Notice that $\mathrm{Lat}_\mathrm{d}(R)$ is also a lattice. To see this, let $I,J$ be decomposable ideals, and let $x\in I, y\in J$. Then there are $\tilde{x}\in M_\infty(I)$, $\tilde{y}\in M_\infty(J)$ such that $x\precsim_1 \tilde{x}$ and $y\precsim_1\tilde{y}$. Using that 
	$x=r\tilde{x}r'$ and $y=s\tilde{y}s'$ for some $r,r',s,s'\in M_\infty (R)$, we get
	\[
	x+y\!=\!
	\left(\begin{array}{cc}x+y&0\\0&0 \end{array}\right)
	= \left(\begin{array}{cc}r&s\\0&0 \end{array}\right)
	\left(\begin{array}{cc}\tilde{x}&0\\0&\tilde{y}\end{array}\right)
	\left(\begin{array}{cc}r'&0\\s'&0 \end{array}\right)
	\precsim_{1}
	\left(\begin{array}{cc}\tilde{x}&0\\0&\tilde{y}\end{array}\right)\in M_\infty(I+J),
	\]
	and therefore the supremum of $I$ and $J$ is $I+J$. The infimum of $I$ and $J$ is the ideal
	\[
	I\wedge J:=\{x\in R\mid x\precsim_1 y\text{ for some }y\in M_{\infty}(I\cap J)\}.
	\]
	Note that $I\wedge J$ is an ideal since for $x,y\in I\wedge J$ we have $x\precsim_1 \tilde{x}, y\precsim_1\tilde{y}$, for $\tilde{x},\tilde{y}\in M_\infty(I\cap J)$, and therefore $x+y\precsim_1\tilde{x}\oplus\tilde{y}$, where the latter belongs to $M_\infty(I\cap J)$. If we also write $x=a\tilde{x}b$ and $r\in R$, we have $rx=(ra)\tilde{x}b$, where $ra$ is the matrix whose entries are the entries of $a$ multiplied by $r$ on the left. Thus $rx\precsim_1 \tilde{x}$, whence $rx\in I\wedge J$. Likewise $xr\in I\wedge J$.
	
	It is now easy to verify that any decomposable ideal $K$ contained in both $I,J$ must already be contained in $I\wedge J$. We have, by construction, that $IJ\subseteq I\wedge J\subseteq I\cap J$.
\end{parag}
\begin{remark}\begin{enumerate}[(i)]
		\item It is easy to check that every idempotent ideal is decomposable.
		\item We also note that every \emph{closed} two-sided ideal $I$ in a C*-algebra $A$ is automatically decomposable. Indeed, given $x\in I$, choose $0<\alpha, \beta$ such that $\alpha+\beta<1/2$. Then, by \cite[Proposition 1.4.5]{Ped79}, there is $u\in A$ such that $x=u(x^*x)^{\alpha+\beta}$. Now, just note that $x=u(x^*x)^\alpha(x^*x)^\beta\precsim_1 (x^*x)^\alpha$ and $(x^*x)^\alpha\in I$ since $I$ is closed.
		\item Non-closed ideals of C*-algebras have raised interest as of late, and they also provide with some examples of decomposable ideals. As shown in \cite{gardella2023semiprimeidealscalgebras}, any semiprime ideal of a C*-algebra is idempotent, and so decomposable by (i). 
	\end{enumerate}
\end{remark}
We will need the following lemma. In what follows, denote by $R^+=\Z\oplus R$ the Dorroh extension of $R$, and view $R$ as a two-sided ideal of $R^+$.

\begin{lemma}\label{lem:decomposable}
	Let $R$ be any ring, and let $a=(a_{ij})\in M_n(R)$.
	\begin{enumerate}[{\rm (i)}]
		\item Suppose that $a\precsim_1 b$, where $b\in M_\infty(R)$. Then, for each $i,j$, we have that $a_{ij}\precsim_1b$. 
		\item Suppose that, for each $i,j$, we have $a_{ij}\precsim_1 b_{ij}$, for some $b_{ij}\in M_\infty(R)$. Then, there is $\tilde{b}\in M_\infty(R)$ such that $a\precsim_1 \tilde{b}$. Moreover, if $I$ is an ideal of $R$ such that $b_{ij}\in M_\infty(I)$, then we can choose $\tilde{b}\in M_\infty(I)$.
	\end{enumerate}
\end{lemma}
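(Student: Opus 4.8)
For part (i), the hypothesis $a \precsim_1 b$ means $a = sbt$ for some $s,t \in M_\infty(R)$. The plan is to extract each entry $a_{ij}$ by pre- and post-multiplying $a$ with suitable matrix units. Concretely, $a_{ij} = e_{1i}\, a\, e_{j1}$ (viewed as the $(1,1)$-entry placed in the corner), where $e_{kl}$ denotes the standard matrix unit. Substituting $a = sbt$ gives $a_{ij} = (e_{1i}s)\, b\, (t e_{j1})$, which exhibits $a_{ij} = s' b t'$ with $s' = e_{1i}s$ and $t' = te_{j1}$ in $M_\infty(R)$, hence $a_{ij} \precsim_1 b$. This step is routine and presents no real obstacle; one only needs to be careful that multiplication by matrix units genuinely isolates the desired entry in the corner.

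For part (ii), the strategy is to assemble a single dominating element from the componentwise dominations $a_{ij} \precsim_1 b_{ij}$. Writing $a_{ij} = s_{ij} b_{ij} t_{ij}$ for each pair, I would first place the finitely many elements $b_{ij}$ in pairwise orthogonal corners of a large matrix, that is, form the block-diagonal element $\tilde b = \bigoplus_{i,j} b_{ij} \in M_\infty(R)$. The goal is then to show $a \precsim_1 \tilde b$, i.e. to produce $S, T \in M_\infty(R)$ with $a = S\,\tilde b\, T$. The natural construction is to let $S$ be the matrix that routes the $(i,j)$-block of $\tilde b$ into the $(i,j)$-position of $a$ via the scalar multipliers $s_{ij}$ placed in appropriate rows, and correspondingly $T$ assembled from the $t_{ij}$; more explicitly, one writes $a = \sum_{i,j} e_{ij}\, a_{ij} = \sum_{i,j} e_{ij}\, s_{ij}\, b_{ij}\, t_{ij}$ and realizes each summand as a product picking out the relevant block of $\tilde b$ through matrix-unit bookkeeping, then sums these up into a single factorization $S\,\tilde b\, T$.

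\textbf{Main obstacle.} The genuinely delicate point in (ii) is the bookkeeping that combines the $n^2$ separate factorizations into one global factorization through the block-diagonal $\tilde b$: each block $b_{ij}$ must be addressed without interference from the others, which is exactly why the orthogonal (block-diagonal) placement is essential, and the multiplier matrices $S$ and $T$ must be defined so that the cross terms vanish. I would handle this by indexing the blocks of $\tilde b$ by the pairs $(i,j)$ and defining $S$ and $T$ so that $S$ carries the $(i,j)$-block to row-block $i$ (incorporating $s_{ij}$) and $T$ carries it from column-block $j$ (incorporating $t_{ij}$), with all off-diagonal interactions forced to zero by the orthogonality of the blocks. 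The final clause is then immediate: since $\tilde b$ is built solely out of the $b_{ij}$, if every $b_{ij} \in M_\infty(I)$ then $\tilde b \in M_\infty(I)$ as well, so the dominating element lies in $M_\infty(I)$ as claimed.
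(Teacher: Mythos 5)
Your proposal is correct and takes essentially the same route as the paper: for (i) you extract $a_{ij}$ via matrix units exactly as the paper does (the paper is just slightly more careful on the non-unital point, noting that the units $E_{kl}$ live in $M_\infty(R^+)$ rather than $M_\infty(R)$, but that the products $E_{1i}s$ and $tE_{j1}$ land in $M_\infty(R)$ --- which is precisely how you use them), and for (ii) you form the block-diagonal $\tilde{b}=\oplus_{i,j}b_{ij}$ and factor $a=S\tilde{b}T$ through rectangular routing matrices built from the $s_{ij}$ and $t_{ij}$. The only difference is organizational: you write the $n^2$-block factorization in one step, whereas the paper displays the $2\times 2$ case explicitly (its displayed factorization is exactly your $S$ and $T$ for $n=2$) and then extends to all of $M_\infty(R)$ by induction over matrices of size $2^k$, padding with zeros.
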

\begin{proof}
	(i):  Let us denote by $E_{i,j}$ the elementary matrix with $1\in R^+$ in the $(i,j)$-position and 0 elsewhere. These matrices need not belong to $M_\infty(R)$, but $E_{i,j}x$ and $xE_{i,j}$ are in $M_{\infty}(R)$  for all $x\in M_{\infty}(R)$.
	
	Now if $a\precsim_1 b$, we have $a=rbs$ with $r,s\in M_\infty(R)$. Hence,
	\[ a_{i,j}=\left(\begin{array}{cc} a_{ij} & 0 \\ 0& 0\end{array}\right)=E_{1,i}aE_{j,1}=E_{1,i}(rbs)E_{j,1}=(E_{1,i}r)b(sE_{j,1}).\]
	Seting $r'=(E_{1,i}r)$ and $s'=(sE_{j,1})$ we get $a_{i,j}\precsim_1 b$.
	
	(ii): Assume  that $a,b,c,d\in R$ satisfy $a\precsim_1 \tilde{a}$, $b\precsim_1 \tilde{b}$, $c\precsim_1\tilde{c}$, and $d\precsim_1\tilde{d}$, for $\tilde{a},\tilde{b},\tilde{c},\tilde{d}\in M_\infty(R)$.
	
	Write suitable matrix decompositions $a=x\tilde{a}y$, $b=z\tilde{b}t$, $c=r\tilde{c}s$, and $d=u\tilde{d}v$. A routine matrix multiplication shows that
	\[
	\left(\begin{array}{cc}a&b\\c&d \end{array}\right)
	= \left(\begin{array}{cccc}x&z & 0 & 0\\0&0& r & u \end{array}\right)
	\left(\begin{array}{cccc}\tilde{a}&0 & 0 & 0\\0&\tilde{b}& 0 & 0\\ 0 & 0 & \tilde{c}& 0 \\ 0 & 0 & 0 & \tilde{d}\end{array}\right)
	\left(\begin{array}{cc}y&0\\0&t\\ s& 0 \\ 0 & v \end{array}\right).
	\]
	Now by induction this settles the result for any matrix in $M_{2^k}(R)$, and since any matrix in $M_\infty(R)$ is identified with a matrix of size $2^k$ for some $k$ (adding zeros if necessary), the result holds for all matrices in $M_n(R)$. The last part of the statement is clear.	
\end{proof}		
\begin{lemma}\label{lma:PhiLambdaWellDef}
	Let $R$ be a ring and let $J\subseteq \Lambda (R)$ be an ideal. Then, the set 
	\[
	\mathrm{Idl}(J):=\{ x\in R\mid x=x_1 \text{ for some } [(x_n)]\in J \}
	\]
	is a decomposable  two-sided ideal of $R$.
\end{lemma}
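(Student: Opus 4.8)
The plan is to verify the two required properties in turn: first that $\mathrm{Idl}(J)$ is a two-sided ideal of $R$, and then that it is decomposable, deducing the latter from \autoref{lem:decomposable}. Everything rests on one elementary manipulation of witnessing sequences, which I would isolate at the outset. \emph{Key observation (prepending).} If $[(x_n)]\in J$ and $w\in R$ satisfies $w\precsim_1 x_m$ for some $m$, then $w\in\mathrm{Idl}(J)$. Indeed, since $x_m\precsim_1 x_{m+1}$ and $\precsim_1$ is transitive, we have $w\precsim_1 x_{m+1}$, so the prepended sequence $(w,x_{m+1},x_{m+2},\dots)$ lies in $T(R)$; a direct comparison shows it is $\sim$-equivalent to the tail $(x_{m+1},x_{m+2},\dots)$, and tails are equivalent to the full sequence $(x_n)$. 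Hence its class lies in $J$ and has $w$ as first entry, so $w\in\mathrm{Idl}(J)$. In particular, \emph{every} term of a witnessing sequence already lies in $\mathrm{Idl}(J)$, since $x_k\precsim_1 x_{k+1}$.

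Granting this, the ideal axioms follow quickly. Take $x\in\mathrm{Idl}(J)$ with witness $(x_n)$, $x=x_1$, and write $x_1=sx_2t$ from $x_1\precsim_1 x_2$. For $r\in R$ we obtain $rx=(rs)x_2t\precsim_1 x_2$, $\ xr=sx_2(tr)\precsim_1 x_2$, and $-x=(-s)x_2t\precsim_1 x_2$, so the observation yields $rx,xr,-x\in\mathrm{Idl}(J)$. For closure under sums, take $x,y\in\mathrm{Idl}(J)$ with witnesses $(x_n),(y_n)$. The $2\times2$ factorisation recorded in \autoref{par:ideals} gives $x+y\precsim_1 x_2\oplus y_2$; since $[(x_n\oplus y_n)]=[(x_n)]+[(y_n)]\in J$ because $J$ is a subsemigroup, and $x_2\oplus y_2$ is the second term of $(x_n\oplus y_n)$, the observation again gives $x+y\in\mathrm{Idl}(J)$. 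Together with $0\in\mathrm{Idl}(J)$ (witnessed by the zero sequence, whose class is the zero of $\Lambda(R)\in J$), this shows $\mathrm{Idl}(J)$ is a two-sided ideal.

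For decomposability, let $x=(x_{ij})\in M_n(\mathrm{Idl}(J))$. For each entry choose a witness and let $b_{ij}$ be its second term; by the key observation $b_{ij}\in\mathrm{Idl}(J)$, and $x_{ij}\precsim_1 b_{ij}$. Applying \autoref{lem:decomposable}(ii) to the matrix $x$ with the ideal $I=\mathrm{Idl}(J)$ (already shown to be an ideal) produces $\tilde b\in M_\infty(\mathrm{Idl}(J))$ with $x\precsim_1\tilde b$, which is exactly decomposability.

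The main obstacle, and the reason the statement is not immediate, is the absence of local units: $\precsim_1$ need not be reflexive and there is no identity available to multiply by, so one cannot rescale or adjust the first term $x_1$ of a witness directly. The prepending observation is precisely what circumvents this, reducing each closure property (left and right multiplication, negation, addition, and the membership of later terms) to a single comparison of the form $w\precsim_1(\text{some later term of a witness in }J)$. Once it is established, the remainder is routine bookkeeping combined with the already-proved \autoref{lem:decomposable}.
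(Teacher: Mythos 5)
Your proof is correct and follows essentially the same route as the paper's: the $2\times 2$ block factorisation of \autoref{par:ideals} for sums, the prepending trick (which you justify by tail-equivalence where the paper instead invokes that $J$ is downward hereditary --- an immaterial difference), and \autoref{lem:decomposable}(ii) for decomposability. One cosmetic repair: the second terms $b_{ij}$ of your witnesses are matrices, so ``$b_{ij}\in\mathrm{Idl}(J)$'' should read $b_{ij}\in M_\infty(\mathrm{Idl}(J))$, which follows by applying your key observation to each entry of $b_{ij}$ via \autoref{lem:decomposable}(i) (since each entry is $\precsim_1$ the next term of the witness), exactly as the paper does.
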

\begin{proof}
	Let $x,y\in \mathrm{Idl}(J)$.~By definition, there exist $[(x_{n})],[(y_{n})]\in J$ such that ~$x_{1}=x$~and $y_{1}=y$.~ Using that $x_{1}\precsim_1x_{2}$ and $y_{1}\precsim_1y_{2}$ we get, as in \autoref{par:ideals}, that $	x+y\precsim_1 x_{2}\oplus y_{2}$.
	
	Thus, we have  $(x+y,x_{2}\oplus y_{2},x_{3}\oplus y_{3},\ldots)\precsim (x_{n}\oplus y_{n})$ in $T (R)$. Since $[(x_n)]+[(y_n)]\in J$ and $J$ is downward hereditary, it follows that $x+y\in \mathrm{Idl}(J)$.
	
	Next, let $x\in \mathrm{Idl}(J)$ and $r\in R$. Then there is $[(x_n)]\in J$ with $x=x_1$, and thus $x\precsim_1 x_2$. It follows from similar arguments as in \autoref{par:ideals} that $rx,xr\precsim_1 x_2$, and thus $rx, xr\in \mathrm{Idl}(J)$. Hence $\mathrm{Idl}(J)$ is a two-sided ideal of $R$.
	
	It remains to show that $\mathrm{Idl}(J)$ is decomposable. Let $x=(x_{ij})\in M_n(\mathrm{Idl}(J))$. For each $i,j$ there is by definition a sequence $\tilde{x}_{ij}^{(n)}\in M_\infty(R)$ such that $x_{ij}=\tilde{x}_{ij}^{(1)}$ and $[(\tilde{x}_{ij}^{(n)})]\in J$. Put $\tilde{x}_{ij}=\tilde{x}_{ij}^{(2)}$. Since $\tilde{x}_{ij}\precsim_1 \tilde{x}_{ij}^{(3)}\precsim_1 \cdots$, we may apply \autoref{lem:decomposable} (i) to conclude that all entries in $\tilde{x}_{ij}$ also belong to $\mathrm{Idl}(J)$, and thus by condition (ii) in \autoref{lem:decomposable}, we have that $x\precsim_1 \tilde{x}$ for some $\tilde{x}\in M_\infty(\mathrm{Idl}(J))$.
\end{proof}

\begin{theorem}\label{Bij_SR}
	Let $R$ be any ring. Then, ${\rm Lat}_{\mathrm{d}}(R)\cong {\rm Lat}(\Lambda (R))$ as lattices.
\end{theorem}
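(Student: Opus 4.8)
The plan is to show that the order-preserving map $\psi_\Lambda$ restricts to a bijection from $\mathrm{Lat}_{\mathrm d}(R)$ onto $\mathrm{Lat}(\Lambda(R))$, with inverse the assignment $J\mapsto \mathrm{Idl}(J)$ of \autoref{lma:PhiLambdaWellDef}. Both maps are clearly order-preserving, and each lands in the correct lattice by \autoref{lma:ideal} and \autoref{lma:PhiLambdaWellDef}; hence, once they are shown to be mutually inverse, the resulting order isomorphism is automatically a lattice isomorphism. So the whole content reduces to the two identities $\mathrm{Idl}(\Lambda_R(I))=I$ for every decomposable ideal $I$, and $\Lambda_R(\mathrm{Idl}(J))=J$ for every ideal $J$ of $\Lambda(R)$.

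For the first identity, the inclusion $\mathrm{Idl}(\Lambda_R(I))\subseteq I$ is immediate: if $x=x_1$ for some $[(x_n)]\in\Lambda_R(I)$, then $x_1\in M_\infty(I)$, so $x\in I$. The reverse inclusion is exactly where decomposability enters: given $x\in I$, I would view $x$ inside $M_\infty(I)$ and apply the defining property of decomposable ideals (\autoref{par:ideals}) repeatedly to build $x=x_1\precsim_1 x_2\precsim_1\cdots$ with all $x_n\in M_\infty(I)$; then $[(x_n)]\in\Lambda_R(I)$ witnesses $x\in\mathrm{Idl}(\Lambda_R(I))$.

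For the second identity, the inclusion $J\subseteq\Lambda_R(\mathrm{Idl}(J))$ is the easy direction: for $[(x_n)]\in J$ and any entry $(x_n)_{ij}$, \autoref{lem:decomposable}(i) gives $(x_n)_{ij}\precsim_1 x_{n+1}$, so the sequence $((x_n)_{ij},x_{n+1},x_{n+2},\dots)$ lies below $(x_m)_m$ and, $J$ being downward hereditary, its class is in $J$; thus $(x_n)_{ij}\in\mathrm{Idl}(J)$ and $x_n\in M_\infty(\mathrm{Idl}(J))$. The reverse inclusion $\Lambda_R(\mathrm{Idl}(J))\subseteq J$ is the heart of the matter. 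Starting from $[(x_n)]$ with every entry of every $x_n$ in $\mathrm{Idl}(J)$, I would choose for each such entry a $J$-sequence realizing it as a first term, and assemble these into elements $g_n\in J$ (the diagonal sum over the entries of $x_n$ of the chosen sequences), so that by \autoref{lem:decomposable}(ii) one has $x_n\precsim_1 (g_n)_2$, the second term of the representative of $g_n$. Setting $h_N:=g_1\oplus\cdots\oplus g_N$ yields an increasing sequence in $J$ whose supremum $w$ again lies in $J$ by closedness under suprema; the goal is then $[(x_n)]\leq w$, after which downward heredity forces $[(x_n)]\in J$.

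The main obstacle is precisely this last comparison $[(x_n)]\leq w$. Using the construction of suprema in $\Lambda(R)$ from \cite[Proposition 2.13]{AntAraBosPerVil23:CuRing}, a representative of $w$ can be taken in diagonal form $z_k=\bigoplus_{j\leq k}(g_j)_{m_k}$ with $m_k\to\infty$. The delicate point is that one cannot in general compare a single diagonal block with the whole diagonal sum (this would amount to reflexivity of $\precsim_1$, which fails for rings that are not weakly $s$-unital). What saves the argument is that the relation $x_n\precsim_1 (g_n)_{m_k}$, obtained by composing $x_n\precsim_1 (g_n)_2$ with the chain $(g_n)_2\precsim_1\cdots\precsim_1 (g_n)_{m_k}$, already comes with multipliers $s,t\in M_\infty(R)$, while the block $(g_n)_{m_k}$ can be extracted from $z_k$ as $(g_n)_{m_k}=Pz_kQ$ for elementary block matrices $P,Q$. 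Since right and left multiplication by such matrices preserves $M_\infty(R)$, the products $sP$ and $Qt$ remain in $M_\infty(R)$ and give $x_n=(sP)z_k(Qt)$, i.e. $x_n\precsim_1 z_k$ genuinely within $M_\infty(R)$. Choosing, for each $n$, an index $k\geq n$ with $m_k\geq 2$ then yields $(x_n)_n\precsim (z_k)_k$, that is $[(x_n)]\leq w$, completing the inclusion and hence the proof.
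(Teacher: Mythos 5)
Your proposal is correct and follows essentially the same route as the paper's proof: the same mutually inverse maps $\psi_\Lambda$ and $J\mapsto\mathrm{Idl}(J)$, the same two identities, and the same use of decomposability and of \autoref{lem:decomposable}. The only difference is that you spell out two steps the paper leaves implicit—assembling entry-level $J$-sequences into matrix-level ones via \autoref{lem:decomposable}(ii), and justifying $[(x_n)]\leq w$ by extracting diagonal blocks with multipliers over $R^+$ absorbed into $M_\infty(R)$ (the same trick as in the proof of \autoref{lem:decomposable}(i))—both of which are accurate refinements rather than a different argument.
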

\begin{proof}
	Let $\phi_\Lambda$ denote the map
	\[
	\xymatrixrowsep{0pc}
	\xymatrix{
		{\rm Lat}(\Lambda (R)) \ar[r]^-{\phi_{\Lambda}} & {\rm Lat}_\mathrm{d}(R)\\
		J \ar@{|->}[r] & \mathrm{Idl}(J)
	}
	\]
	which is well-defined by \autoref{lma:PhiLambdaWellDef}.~It is trivial that $\phi_\Lambda$ is inclusion-preserving, and so $\phi_\Lambda$ is an ordered morphism.	
	
	Since any ordered isomorphism between ordered lattices is a lattice isomorphism, it suffices to show that  the map $\psi_\Lambda$ defined prior to \autoref{par:ideals} is, when restricted to decomposable ideals, the inverse for $\phi_\Lambda$. That is, we have to show that $\mathrm{Idl}(\Lambda_R(I))=I$ for any decomposable ideal $I$ of $R$ and $\Lambda_R(\mathrm{Idl}(J))=J$ for any ideal $J$ of $\Lambda(R)$.
	
	Now, given a two-sided ideal $I\subseteq R$ (decomposable or not), it is easy to check that $\mathrm{Idl}(\Lambda_R(I))\subseteq I$. For the converse inclusion, let $x\in I$. Applying repeatedly that $I$ is decomposable we find a $\precsim_1$-increasing sequence $x_n\in M_\infty(I)$ such that $x=x_1$. By definition, $[(x_n)]\in \Lambda_R(I)$ and $x$ is the first term in the sequence, hence $x\in\mathrm{Idl}(\Lambda_R(I))$.
	
	Let $J$ be an ideal of $\Lambda (R)$, and let $[(x_n)]\in J$. For each $k$, we have $[(x_n)_{n\geq k}] = [(x_n)_{n\geq 1}]$ and thus $[(x_n)_{n\geq k}]\in J$. Since $x_k\precsim_1 x_{k+1}$, it follows from \autoref{lem:decomposable} (i) that all the entries $(x_k)_{ij}$ of $x_k$ satisfy $(x_k)_{ij}\precsim_1 x_{k+1}$, and thus they all belong to $\mathrm{Idl}(J)$. Therefore $[(x_n)_{n\geq 1}]\in\Lambda_R(\mathrm{Idl}(J))$.
	
	For the converse inequality, given an element $[(x_{n})]\in \Lambda_R(\mathrm{Idl}(J))$, choose for each $n$ a sequence $[(y_{n,m})_{m}]\in J$ such that $y_{n,1}=x_{n}$. Note, in particular, that one has
	\[
	x_{n}=y_{n,1}\precsim_{1} y_{1,2}\oplus y_{2,2}\oplus\ldots \oplus y_{n,2}.
	\]
	
	Since $J$ is closed under suprema of increasing sequences we have that 
	\[
	s:=\sup_{n} ([(y_{1,m})_{m}]+\ldots+[(y_{n,m})_{m}])\in J.
	\] 
	
	By construction, we have that $[(x_{n})]\leq s$  and, as $J$ is downward hereditary, it follows that $[(x_{n})]\in J$. This shows that $\Lambda_R(\mathrm{Idl}(J))=J$ for each $J\in {\rm Lat}(\Lambda (R))$, as desired.
\end{proof}
\begin{corollary}
	\label{cor:Bij}
	Let $R$ be a weakly $s$-unital ring. Then $\mathrm{Lat}(R)\cong\mathrm{Lat}(\Lambda(R))$.	
\end{corollary}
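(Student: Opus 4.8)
The plan is to reduce this corollary to \autoref{Bij_SR} by observing that weak $s$-unitality forces every two-sided ideal of $R$ to be decomposable, so that the lattice $\mathrm{Lat}_\mathrm{d}(R)$ appearing in \autoref{Bij_SR} coincides with the full lattice $\mathrm{Lat}(R)$. The starting point is the remark recorded in \autoref{par:WRetal}: if $R$ is weakly $s$-unital, then $x\precsim_1 x$ for every $x\in M_\infty(R)$, since by definition $x=sxt$ for suitable $s,t\in M_\infty(R)$.

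First I would verify the equality $\mathrm{Lat}_\mathrm{d}(R)=\mathrm{Lat}(R)$. Let $I$ be any two-sided ideal of $R$ and take $x\in M_\infty(I)$. Applying the remark above to $x$ gives $x\precsim_1 x$ with $x\in M_\infty(I)$, so $I$ satisfies the defining condition of a decomposable ideal from \autoref{par:ideals} (taking $y=x$). Hence every ideal of $R$ is decomposable, and the inclusion $\mathrm{Lat}_\mathrm{d}(R)\subseteq\mathrm{Lat}(R)$ is in fact an equality of lattices.

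Finally I would invoke \autoref{Bij_SR}, which for an arbitrary ring provides a lattice isomorphism $\mathrm{Lat}_\mathrm{d}(R)\cong\mathrm{Lat}(\Lambda(R))$. Combining this with the equality $\mathrm{Lat}_\mathrm{d}(R)=\mathrm{Lat}(R)$ established in the previous step yields the desired lattice isomorphism $\mathrm{Lat}(R)\cong\mathrm{Lat}(\Lambda(R))$.

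I do not expect any genuine obstacle here, as the corollary is essentially the specialization of \autoref{Bij_SR} to a hypothesis that trivializes the distinction between decomposable and arbitrary ideals. The only point requiring care is to confirm that weak $s$-unitality, which is phrased at the level of the matrix rings $M_n(R)$, indeed delivers $x\precsim_1 x$ for every $x\in M_\infty(R)$; but this is exactly the observation already made in \autoref{par:WRetal}, so the argument is purely a matter of assembling a definitional fact with the main theorem of the section.
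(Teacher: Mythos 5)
Your proposal is correct and matches the paper's (implicit) argument exactly: the corollary is stated without proof precisely because, as noted in \autoref{par:ideals}, weak $s$-unitality gives $x\precsim_1 x$ for every $x\in M_\infty(R)$, so every two-sided ideal is decomposable and \autoref{Bij_SR} applies with $\mathrm{Lat}_\mathrm{d}(R)=\mathrm{Lat}(R)$. Your verification that taking $y=x$ witnesses decomposability is exactly the right (and only) point to check.
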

\section{Pure and quasipure ideals}
\label{sec:pure}
In this section we focus on describing what part of the ideal structure of a ring $R$ is captured by the semigroup $\SR(R)$. In light of our previous results, one might suspect that the ideals of $\SR (R)$ also distinguish all decomposable ideals of $R$. However, as showcased in \autoref{exa:IdNoBij} below, this is not always the case. The right notion in this case will be that of \emph{quasipure} ideal; see \autoref{def:QuaPurId} and \autoref{Bij_WR}. We also show in \autoref{thm:retract} that the lattice of quasipure ideals is a retract of the lattice of decomposable ideals.

\begin{example}\label{exa:IdNoBij}
	There exists a unital ring $R$ with different ideals $I,J$ such that $\SR(I)=\SR(J)$. 	
\end{example}
\begin{proof}	
	Let $R$ be a commutative principal ideal domain which is not a field. 
	If $I$ is a proper ideal of $R$ and $\SR (I)\ne 0$, then taking a nonzero element $(a_n)$ in $\SR(I)$, we see that 
	$a_i\in \cap_{n\ge 1} I^n$ for all $i$, which implies that $\cap_{n\ge 1} I^n \ne 0$, a contradiction. Hence $\SR (I) = 0 = \SR (0)$ for all proper ideal $I$ of $R$. 
\end{proof}


\begin{parag}[Pure and quasipure ideals]
	\label{def:QuaPurId}
	Recall that a right ideal $I$ of a unital ring $R$ is \emph{pure} as a right submodule of $R$ if for every $y\in I$ there exists $s\in I$ such that $sy=y$. Of course, this notion does not need the unit of the ring and we will use it in the more general context of not necessarily unital rings. By the arguments in, for example, \cite[Lemma 2.2]{AAlgCol2004}, we see that if $I$ is pure then so is $M_n(I)$ for all $n$.
	
	More generally, we say that a right ideal $I$ of a ring $R$ is \emph{quasipure} if for every $x\in M_{\infty}(I)$ there exist $s,y\in M_{\infty}(I)$ with
	\[
	sy=y,\quad\text{and}\quad x\precsim_{1} y
	\]
	in $M_{\infty}(R)$. Note that if $I$ is pure and $x\in M_\infty(I)$, then by choosing $s,t\in M_\infty(I)$ such that $ts=s$ and $x=sx$, we have $x=ssx$, that is, $x\precsim_1 s$. Therefore, $I$ is quasipure.
\end{parag}	
We will be interested in (two-sided) ideals which are quasipure as right ideals.  The following easy lemma contains some equivalent variants in the definition of quasipureness for ideals.
\begin{lemma}
	\label{lem:EqQuasi-Pure}
	Let $I$ be an ideal of a ring $R$. Then the following conditions are equivalent:
	\begin{enumerate}[{\rm (i)}]
		\item $I$ is a quasipure right ideal of $R$.
		\item For each $x\in M_{\infty}(I)$ there exists $r\in M_{\infty} (R^+)$ and $y,s\in M_{\infty}(I)$ such that 
		$x= ry$ and $sy= y$. 
		\item For each $x\in M_{\infty}(I)$ there exists $r,s,y\in M_{\infty} (I)$ such that 
		$x= ry$ and $sy= y$.
	\end{enumerate}
	
	\begin{proof}
		(i)$\implies $ (ii): Suppose that $I$ is a quasipure ideal of $R$ and take any $x\in M_{\infty}(I)$. There are $a,b\in M_{\infty}(R)$ and $y,s\in M_{\infty}(I)$ such that
		$$x= ayb,\qquad y= sy .$$
		Then $x= a(yb)$ and $yb= s(yb)$, showing (ii).
		
		(ii)$\implies $(iii): Let $x\in M_{\infty}(I)$. By (ii) there is $r\in M_{\infty}(R^+)$ and $y,s\in M_{\infty}(I)$ such that $x= ry$ and $y=sy$. 
		Write 
		$ x=ry = (rs)y = r'y$
		where $r':= rs \in M_{\infty}(I)$. 
		
		(iii)$\implies $ (i): Let $r,s,y\in M_{\infty}(I)$ such that $x= ry$ and $y=sy$. Now take $r',s',t'\in M_{\infty}(I)$ such that $r= r's'$ and $s'=t's'$.
		Then $x= ry= r's'y$ so that $x\precsim_1 s'$, as desired.  
	\end{proof}
\end{lemma}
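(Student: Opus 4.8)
The plan is to prove the cycle of implications (i) $\implies$ (ii) $\implies$ (iii) $\implies$ (i), since each step amounts to a short manipulation of matrix factorizations, with the recurring trick being to absorb factors into the ideal by using the defining property $sy = y$ repeatedly. Throughout I would work in $M_\infty(R)$ and $M_\infty(I)$, recalling that $I$ being a two-sided ideal means $M_\infty(I)$ is closed under left and right multiplication by elements of $M_\infty(R)$ (and of $M_\infty(R^+)$).

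For (i) $\implies$ (ii): starting from quasipureness, given $x \in M_\infty(I)$ I obtain $a,b \in M_\infty(R)$ and $y,s \in M_\infty(I)$ with $x = ayb$ and $sy = y$. The idea is to regroup the factorization by setting $y' := yb$, which lies in $M_\infty(I)$ since $y \in M_\infty(I)$ and $I$ is a two-sided ideal. Then $x = a y'$ with $a \in M_\infty(R) \subseteq M_\infty(R^+)$, and I must check $s y' = y'$; this follows immediately because $sy' = s(yb) = (sy)b = yb = y'$. This gives the required data for (ii).

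For (ii) $\implies$ (iii): the only deficiency of (ii) relative to (iii) is that the multiplier $r$ lives in $M_\infty(R^+)$ rather than $M_\infty(I)$. The fix is to absorb $s$ into $r$: writing $x = ry = r(sy) = (rs)y$ and setting $r' := rs$, I note that $r' \in M_\infty(I)$ because $s \in M_\infty(I)$ and $I$ is a two-sided ideal (so left multiplication by $r \in M_\infty(R^+)$ keeps us in $M_\infty(I)$). Thus $x = r'y$ with $r',s,y$ all in $M_\infty(I)$, which is precisely (iii).

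For (iii) $\implies$ (i): here the task is to upgrade the factorization $x = ry$, $sy = y$ (with $r,s,y \in M_\infty(I)$) into a genuine Cuntz subequivalence $x \precsim_1 (\,\cdot\,)$ with an element of $M_\infty(I)$ that is itself ``self-absorbing'' in the right-ideal sense. The natural move is to further factor $r$ and $s$ within the ideal: choose $r',s',t' \in M_\infty(I)$ with $r = r's'$ and $s' = t's'$, which is possible after a suitable application of the factorization hypotheses (essentially re-feeding elements of $M_\infty(I)$ through condition (iii) or its consequences). Then $x = ry = r's'y$, exhibiting $x = r'(s'y)$, so $x \precsim_1 s'$ in $M_\infty(R)$; and $s'$ satisfies $t's' = s'$, witnessing the purity-type relation required in the definition of quasipure. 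I expect the main subtlety to lie in this last implication, namely arranging the auxiliary factorizations $r = r's'$ and $s' = t's'$ so that all intermediate elements genuinely remain in $M_\infty(I)$ and the self-absorbing element one produces is the one appearing in $x \precsim_1 s'$; once these factorizations are in place the verification is a direct substitution.
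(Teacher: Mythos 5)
Your proposal is correct and follows essentially the same route as the paper: the same cycle (i)$\implies$(ii)$\implies$(iii)$\implies$(i), the same regrouping $y'=yb$ in the first step, the same absorption $r'=rs$ in the second, and the same key move in the third, where the factorizations $r=r's'$ and $s'=t's'$ are obtained precisely by applying condition (iii) to the element $r\in M_\infty(I)$ itself, as you anticipate. The ``subtlety'' you flag is thus immediately resolved, since (iii) applied to $r$ hands you exactly $r',s',t'\in M_\infty(I)$ with the required relations, and then $x=r'(s'y)$ gives $x\precsim_1 s'$ with witness $t's'=s'$.
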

\begin{parag}[Quasipure and trace ideals]
	Let us denote by $\mathrm{Lat}_\mathrm{qp}(R)$ the subset of $\mathrm{Lat}_\mathrm{d}(R)$ consisting of quasipure right ideals that are also two-sided ideals. As in Paragraph~\ref{par:ideals}, $\mathrm{Lat}_\mathrm{qp}(R)$ forms a lattice. Indeed, the supremum is just given by the sum, and the infimum of two quasipure ideals $I,J$ is the quasipure ideal
	\[
	\{ x\in R\mid  x\precsim_1 y \text{ and }sy=y, \text{ for some }s,y\in M_\infty(I\cap J)\},
	\]
	since any element $x$ of any quasipure ideal $T$ contained in $I\cap J$ satisfies $x\precsim_{1}y$ with $sy=y$ and $s,y\in T\subseteq I\cap J$.
	
	Pure ideals have been considered in the literature, among other things, in connection with the notion of trace ideal. 
	For commutative rings, Vasconcelos (\cite[Theorem 3.1]{Vas73}) showed that all pure ideals are generated by idempotents if, and only if, any projective ideal is the direct sum of finitely generated projective ideals. In fact, 	
	in a commutative unital ring an ideal is pure precisely when it is the trace ideal of a projective module (see \cite[Proposition 1.1]{JonTro74} and also \cite[Corollary~2.13]{Herbera2014} for an alternative proof of this result). 
	In the noncommutative setting, J\o ndrup and Trosborg in \cite{JonTro74} proved that a pure ideal is always the trace ideal of a projective right module, but not conversely (\cite[Example 1.2]{JonTro74}). Other examples are also given in \cite[Remark 2.10 (3)]{Herbera2014}.
	
	Trace ideals are usually considered in the unital seting, but it is straightforward to consider the corresponding notion for non-unital rings. 
	In \cite[Paragraph 4.11]{AntAraBosPerVil23:CuRing}, the authors define the semigroup $\CP (R)$ out of equivalence classes of countably generated projective $R$-modules $P$, which by definition are projective 
	$R^+$-modules such that $P=PR$. Moreover, it is shown in \cite[Theorem 4.13]{AntAraBosPerVil23:CuRing} that there is an isomorphism of ordered monoids $\CP (R)\cong \SR (R)$ for any ring $R$. 
	
	The {\it trace ideal} of a projective $R$-module $P$ is defined as the trace ideal of $P$ as an $R^+$-module, namely
	$\text{tr}(P) = \sum f(P)$, where $f$ ranges on all homomorphisms $f\colon P\to R^+$. Note however that since $P=PR$ we have $\text{tr} (P) \subseteq R$.  
	Hence $\text{tr}(P)$ is always an idempotent ideal of $R$ for any projective $R$-module $P$. 
	
	The exact relationship between the trace ideals of projective right modules and two-sided ideals is captured by the notion of quasipureness, as shown below.
	The main ingredient in this characterization is \cite[Proposition 2.6]{Herbera2014}.
\end{parag}

\begin{lemma}\label{lma:CarProj}
	Let $R$ be a unital ring and let $I\subseteq R$ be a two-sided ideal. Then, the following are equivalent:
	\begin{enumerate}[{\rm(i)}]
		\item $I$ is quasipure.
		\item Given any finite subset $X\subseteq I$, there exist finitely generated left ideals $J_{1}\leq J_{2}\leq I$ such that $X\subseteq J_{1}$ and $J_{2}J_{1}=J_{1}$.
		\item $I$ is the trace ideal of a projective right $R$-module.
	\end{enumerate}
\end{lemma}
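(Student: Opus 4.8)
The plan is to establish the equivalences by proving (i)$\Leftrightarrow$(ii) directly from the matrix description of quasipureness, and then identifying (ii)$\Leftrightarrow$(iii) with the module-theoretic characterization of trace ideals supplied by \cite[Proposition 2.6]{Herbera2014}. The substantive input lives entirely in the last step, so the bulk of what I would actually write out is the elementary equivalence (i)$\Leftrightarrow$(ii).

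For (i)$\Rightarrow$(ii): given a finite subset $X\subseteq I$, I would encode it as a single matrix $x\in M_\infty(I)$ having the elements of $X$ among its entries. Applying the equivalent form \autoref{lem:EqQuasi-Pure}(iii) of quasipureness yields $r,s,y\in M_\infty(I)$ with $x=ry$ and $sy=y$; passing to a common finite representative in $M_N(I)$, I would set $J_1$ to be the finitely generated left ideal generated by the entries of $y$, and $J_2$ the left ideal generated by the entries of $y$ together with those of $s$. The identity $x=ry$ shows that every entry of $x$, hence all of $X$, lies in $J_1$; the identity $sy=y$ shows that each generator $y_{kj}$ of $J_1$ satisfies $y_{kj}=\sum_l s_{kl}y_{lj}\in J_2 J_1$, so $J_1\subseteq J_2J_1$, while $J_2J_1\subseteq IJ_1\subseteq J_1$ because $J_1$ is a left ideal and $J_2\subseteq I$. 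Thus $J_1\le J_2\le I$ are finitely generated with $X\subseteq J_1$ and $J_2J_1=J_1$.

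For (ii)$\Rightarrow$(i): given $x\in M_\infty(I)$ with finite representative in $M_N(I)$, I would apply (ii) to the finite set $X$ of its entries, obtaining finitely generated left ideals $J_1=\sum_{k=1}^m Ra_k\le J_2\le I$ with $X\subseteq J_1$ and $J_2J_1=J_1$. The crucial point is to convert $J_2J_1=J_1$ into a purity relation at the matrix level: writing each generator $a_l$ as an element of $J_2J_1$ and using that $I$ is two-sided, I would produce a matrix $S\in M_m(I)$ with $\underline{a}=S\underline{a}$, where $\underline{a}=(a_1,\dots,a_m)^T$ is viewed as a column in $M_\infty(I)$. Then $\underline{a}$ is pure, witnessed by $S$. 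Since $a_k\precsim_1\underline{a}$ for each $k$ (via an elementary matrix, using $1\in R$), while every entry of $x$ is an $R$-combination of the $a_k$, the sum-to-direct-sum manipulation of \autoref{par:ideals} gives that each entry satisfies $x_{\alpha\beta}\precsim_1 Y$ for $Y$ a suitable finite direct sum of copies of $\underline{a}$; being a finite direct sum of pure elements, $Y$ is again pure. Finally \autoref{lem:decomposable}(ii) assembles these entrywise dominations into a single $x\precsim_1\tilde{Y}$ with $\tilde{Y}\in M_\infty(I)$ a direct sum of copies of $Y$, hence still pure. This is exactly quasipureness.

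For (ii)$\Leftrightarrow$(iii) I would defer to \cite[Proposition 2.6]{Herbera2014}, which characterizes precisely the two-sided ideals arising as trace ideals of projective right modules through the local condition in (ii) on finitely generated left ideals, keeping this consistent with the semigroup picture via the isomorphism $\CP(R)\cong\SR(R)$ of \cite[Theorem 4.13]{AntAraBosPerVil23:CuRing}. The main obstacle sits here rather than in (i)$\Leftrightarrow$(ii): manufacturing an actual projective module whose trace equals $I$ from the purely internal, finitely generated data of (ii) is the nontrivial module-theoretic construction, and it is exactly what Herbera's result furnishes.
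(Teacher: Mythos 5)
Your proposal is correct and follows essentially the same route as the paper: both reduce (ii)$\Leftrightarrow$(iii) to \cite[Proposition 2.6]{Herbera2014} and prove (i)$\Leftrightarrow$(ii) by the same matrix manipulations, with (i)$\Rightarrow$(ii) via \autoref{lem:EqQuasi-Pure} and the left ideals generated by the entries of $y$ and $s$. The only (cosmetic) difference is in (ii)$\Rightarrow$(i), where the paper builds one explicit global factorization $x=ry$, $sy=y$ out of block matrices containing copies of the generator column $\overline{y}$, while you assemble the same conclusion entrywise through \autoref{lem:decomposable}(ii) and direct sums of the pure column $\underline{a}$ --- both arguments hinge on the identical observation that $J_2J_1=J_1$ with $I$ two-sided yields $S\in M_m(I)$ with $S\underline{a}=\underline{a}$.
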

\begin{proof}
	That (ii) and (iii) are equivalent is proved in \cite[Proposition~2.6]{Herbera2014}. Thus, we only need to show that (i) is equivalent to (ii).
	Let us first show that (i) implies (ii). 
	
	Suppose $X=\{x_1,x_2,\dots,x_n\}$. Let $x=\left(\begin{smallmatrix} x_1 & \\ \vdots & \textrm{\ \large 0} \\ x_n & \end{smallmatrix}\right)\in M_\infty(I)$. By \autoref{lem:EqQuasi-Pure}, there exists $r,y,s\in M_\infty(I)$ such that $x= ry$ and $sy=y$. Hence we obtain	
	\[
	x_i= r_{i,1}y_{1}+\ldots +r_{i,m}y_{m},
	\]
	where $y_1,\dots,y_m\in I$ are the nonzero coefficients of the first column of $y$.
	
	Let $J_{1}$ be the left ideal generated by $y_{1},\ldots ,y_{m}$, and let $J_{2}$ be the left ideal generated by $y_{1},\ldots ,y_{m}$ and all the non-zero entries in $s$. It follows by  construction that we have $X\subseteq J_{1}$, $J_{1}\leq J_{2}$ and $J_{2}J_{1}=J_{1}$.
	
	We now prove that (ii) implies (i). Thus, let $x\in M_{\infty}(I)$ and let $m\geq 1$ be such that the entries $x_{i,j}$ of $x$ are zero whenever $i>m$ or $j>m$. By assumption, there exist finitely generated left ideals $J_{1}\leq J_{2}$ such that $x_{i,j}\in J_{1}$ whenever $i,j\leq m$ and $J_{2}J_{1}=J_{1}$. Let $y_{1},\ldots ,y_{n}$ be the generators of $J_{1}$, and let $r_{1},\ldots ,r_{m}$ be $m\times n$ matrices such that 
	\[
	\left(\begin{array}{c}
		x_{1,j} \\
		\vdots \\
		x_{m,j} \\
	\end{array}\right) 
	=r_{j}
	\left(\begin{array}{c}
		y_{1} \\
		\vdots \\
		y_{n} \\
	\end{array}\right) 
	\]
	for every $j$.
	
	Let $\overline{y}$ denote the $n\times 1$ column vector $(y_i)_i$. Then, the matrices
	\[
	r= 
	\left(\begin{array}{c}
		r_{1}\,\, r_{2}\,\,\ldots\,\, r_{n}
	\end{array}\right),\quad\text{and}\quad
	y=
	\left(\begin{array}{cccccc}
		
		\overline{y}  &   0     & \ldots   & 0 & 0 & \ldots \\
		0    &   \overline{y}  &   & 0 & 0 & \\
		\vdots &  &  \ddots^{n)} & & \vdots & \\
		0 & 0 & & \overline{y} & 0 & \\
		0 & 0 & \ldots & 0 & 0 & \\
		\vdots & & & & & \ddots \\
	\end{array}\right)
	\]
	satisfy $x=ry$.
	
	Further, since $J_{2}\leq I$ satisfies $J_{2}J_{1}=J_{1}$, there exists $s_0\in M_{n,n}(I)$ such that $s_0\overline{y}=\overline{y}$. Letting $s=\text{diag}(s_0, s_0 ,\ldots , s_0)$ one gets $sy=y$, as required.
\end{proof}
\begin{corollary}
	\label{cor:idptqp}
	Let $R$ be a ring. Then every idempotent ideal that is finitely generated as a left ideal is quasipure.	
\end{corollary}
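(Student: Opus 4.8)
The plan is to reduce to the equivalent formulation of quasipureness given in \autoref{lem:EqQuasi-Pure}(ii) and then run the same matrix construction as in the proof of \autoref{lma:CarProj}. Write $I=\sum_{i=1}^{n}R^{+}g_{i}$ for finitely many generators $g_{1},\dots,g_{n}\in I$ of $I$ as a left ideal, and let $\overline{g}\in M_{n,1}(I)$ denote the column vector $(g_{i})_{i}$. The whole argument hinges on producing a matrix $s_{0}\in M_{n}(I)$, with entries in $I$ and not merely in $R^{+}$, satisfying $s_{0}\overline{g}=\overline{g}$.

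First I would establish this key step from idempotency. Since $I=I^{2}$, any product $ab$ with $a,b\in I$ may be rewritten, after expanding $b=\sum_{j}c_{j}g_{j}$ with $c_{j}\in R^{+}$, as $ab=\sum_{j}(ac_{j})g_{j}$ with $ac_{j}\in IR^{+}\subseteq I$; hence every element of $I=I^{2}$ is a left $I$-combination $\sum_{j}d_{j}g_{j}$ of the generators, with all $d_{j}\in I$. Applying this to each $g_{i}\in I$ yields $g_{i}=\sum_{j}s_{ij}g_{j}$ with $s_{ij}\in I$, so $s_{0}:=(s_{ij})\in M_{n}(I)$ satisfies $s_{0}\overline{g}=\overline{g}$. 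This is exactly where idempotency is indispensable: without it one would only recover the generators with coefficients in $R^{+}$, and the witness $s$ constructed below would fail to lie in $M_{\infty}(I)$.

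With $s_{0}$ in hand, the rest is the bookkeeping of \autoref{lma:CarProj}. Given $x\in M_{\infty}(I)$ with finite representative in $M_{m}(I)$, each entry lies in $\sum_{k}R^{+}g_{k}$, so the $j$-th column of $x$ equals $r_{j}\overline{g}$ for some $r_{j}\in M_{m,n}(R^{+})$. Setting $r=(r_{1}\mid\cdots\mid r_{m})\in M_{\infty}(R^{+})$ and letting $y\in M_{\infty}(I)$ be block-diagonal with $m$ copies of $\overline{g}$, a direct block computation gives $x=ry$; and with $s=\mathrm{diag}(s_{0},\dots,s_{0})\in M_{\infty}(I)$ the identity $s_{0}\overline{g}=\overline{g}$ yields $sy=y$. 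This verifies \autoref{lem:EqQuasi-Pure}(ii), so $I$ is quasipure.

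The only genuine obstacle I anticipate is the non-unital bookkeeping in the key step, namely guaranteeing that the coefficients $s_{ij}$ recovering the generators lie in $I$ itself, which is precisely what idempotency provides, and keeping track of which matrices are permitted entries in $R^{+}$ versus $I$. In the unital case the corollary is immediate from \autoref{lma:CarProj} applied with $J_{1}=J_{2}=I$, since $I$ is finitely generated as a left ideal and $J_{2}J_{1}=I^{2}=I=J_{1}$; the content of the corollary is really carrying out this reduction for arbitrary rings.
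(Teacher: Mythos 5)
Your proof is correct, but it takes a genuinely different route from the paper's. The paper's proof is a short reduction to the literature: by \cite[Corollary 2.7]{Whthead80}, an idempotent ideal that is finitely generated as a left ideal is the trace of a countably generated projective right $R^+$-module $P$; since $\mathrm{tr}(P)\subseteq R$, the entries of a column-finite idempotent matrix representing $P$ lie in $R$, so $P=PR$ is a projective $R$-module, and quasipureness then follows from the trace-ideal characterization in \autoref{lma:CarProj}. You instead verify quasipureness directly via condition (ii) of \autoref{lem:EqQuasi-Pure}: the one genuinely new observation---that idempotency upgrades the coefficients expressing each generator $g_i$ as a left combination $\sum_j s_{ij}g_j$ from $R^+$ to $I$, yielding $s_0\in M_n(I)$ with $s_0\overline{g}=\overline{g}$---is exactly right (your computation $z=\sum_k a_kb_k=\sum_j\bigl(\sum_k a_kc_{kj}\bigr)g_j$ with $a_kc_{kj}\in IR^+\subseteq I$ uses that $I$ is two-sided, which the statement provides), and the remaining block-matrix bookkeeping is the same as in the (ii)$\Rightarrow$(i) direction of \autoref{lma:CarProj}. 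What each approach buys: the paper's argument is two lines given the citation and places the corollary conceptually within the quasipure-equals-trace-ideal picture, but it must finesse non-unitality by passing through $R^+$ and checking $P=PR$, since \autoref{lma:CarProj} is stated for unital rings; your argument is self-contained and elementary, avoids Whitehead's theorem entirely, works directly over an arbitrary ring, and makes transparent exactly where idempotency enters. Your closing remark that for unital $R$ the corollary is immediate from \autoref{lma:CarProj}(ii) with $J_1=J_2=I$ is also correct.
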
	
\begin{proof}
	By \cite[Corollary 2.7]{Whthead80}, such an ideal is the trace of a countably generated projective right $R^+$-module $P$. Observing that $\text{tr}(P)$ is the ideal of $R^+$ generated by the entries of any column-finite idempotent matrix $E$ representing $P$, we conclude that all entries of $E$ belong to $R$, and hence $P=PR$ is a projective $R$-module. Hence the result follows from \autoref{lma:CarProj}. 
\end{proof}	

We will now see that the lattice of two-sided quasipure ideals of a ring $R$ is isomorphic to the ideal lattice of $\SR (R)$. This extends \cite[Theorem 2.1]{FHK}, where it is shown that the lattice of trace ideals of 
finitely generated projective $R$-modules is isomorphic to the lattice of order-ideals of the monoid $V(R)$, for each unital ring $R$. We first need the following two lemmas.
\begin{lemma}\label{Ideal_WR}
	Let $I$ be a two-sided ideal of a ring $R$, and let $(x_n)$ be a sequence of elements in $M_{\infty } (I)$. Then, $(x_{n})\in\preS (R)$ if and only if $(x_{n})\in\preS (I)$. In particular, one has $\SR(I)=\SR (R)\cap\Lambda_R(I)$.
\end{lemma}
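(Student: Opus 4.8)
The plan is to prove the biconditional first by directly manipulating the defining relations of $\preS$, and then to deduce the set equality $\SR(I)=\SR(R)\cap\Lambda_R(I)$ as a consequence. Throughout I will use that $M_\infty(I)$ is a two-sided ideal of $M_\infty(R)$, so that products with one factor in $M_\infty(I)$ stay in $M_\infty(I)$.

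The implication $(x_n)\in\preS(I)\Rightarrow(x_n)\in\preS(R)$ is immediate, since all the required witnesses already lie in $M_\infty(I)\subseteq M_\infty(R)$ and $\precsim_1$ computed inside $I$ is a fortiori $\precsim_1$ computed inside $R$. For the converse, suppose $(x_n)\in\preS(R)$ with every $x_n\in M_\infty(I)$, and pick $y_{n+1}\in M_\infty(R)$ with $x_n=y_{n+1}x_{n+1}x_n$. The key move is to substitute this relation into itself once: $x_n=y_{n+1}x_{n+1}(y_{n+1}x_{n+1}x_n)=(y_{n+1}x_{n+1}y_{n+1})x_{n+1}x_n$. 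Setting $\tilde y_{n+1}:=y_{n+1}x_{n+1}y_{n+1}$ and noting $y_{n+1}x_{n+1}\in M_\infty(I)$, we get $\tilde y_{n+1}\in M_\infty(I)$ and $x_n=\tilde y_{n+1}x_{n+1}x_n$. This supplies the second requirement in the definition of $\preS(I)$ with an $I$-witness, and simultaneously witnesses $x_n\precsim_1 x_{n+1}$ inside $M_\infty(I)$ (take $s=\tilde y_{n+1}$ and $t=x_n$, both in $M_\infty(I)$), so that $(x_n)\in T(I)$. Hence $(x_n)\in\preS(I)$.

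For the ``in particular'' claim I must check that the natural map $\SR(I)\to\Lambda(R)$ is injective and has image $\SR(R)\cap\Lambda_R(I)$. Injectivity reduces to showing that $\precsim$ computed in $R$ and in $I$ agree on $\preS(I)$-sequences: if $(x_n),(y_n)\in\preS(I)$ and $x_n=sy_mt$ with $s,t\in M_\infty(R)$, I replace $y_m$ using $y_m=y'_{m+1}y_{m+1}y_m$ with $y'_{m+1}\in M_\infty(I)$ (available since $(y_n)\in\preS(I)$), obtaining $x_n=(sy'_{m+1})\,y_{m+1}\,(y_mt)$ with both outer factors in $M_\infty(I)$; thus $x_n\precsim_1 y_{m+1}$ in $M_\infty(I)$, so $\precsim$ in $R$ forces $\precsim$ in $I$ (the reverse being trivial), and $\sim$ likewise agrees. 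The inclusion of the image in $\SR(R)\cap\Lambda_R(I)$ is clear. For the reverse inclusion, take a class lying in $\SR(R)\cap\Lambda_R(I)$, with a representative $(a_n)\in\preS(R)$ and a representative $(b_n)$ having $b_n\in M_\infty(I)$; since $a_n\precsim_1 b_m$ for some $m$ and $b_m\in M_\infty(I)$, the two-sidedness of $I$ gives $a_n\in M_\infty(I)$ for every $n$, so by the already-proved biconditional $(a_n)\in\preS(I)$ and the class is in the image.

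The only genuine obstacle is manufacturing the witnesses \emph{inside} $M_\infty(I)$ rather than merely inside $M_\infty(R)$; both the doubling substitution in the main direction and the extraction of an $I$-witness from the $\preS$-relation in the comparison step are the devices that overcome this, and each works precisely because $M_\infty(I)$ is a two-sided ideal.
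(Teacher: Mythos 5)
Your proof is correct and follows essentially the same route as the paper: the paper manufactures the $I$-witness by substituting the relation at the next index, writing $x_n=(y_{n+1}y_{n+2}x_{n+2})x_{n+1}x_n$, whereas you substitute the relation for $x_n$ into itself to get the witness $y_{n+1}x_{n+1}y_{n+1}\in M_\infty(I)$ --- a cosmetic variation, since both devices work for the same reason, namely that $M_\infty(I)$ absorbs multiplication by $M_\infty(R)$. Your additional verification that $\precsim$ computed in $R$ and in $I$ agree on $\preS(I)$-sequences, so that $\SR(I)$ genuinely embeds in $\Lambda(R)$ and the equality $\SR(I)=\SR(R)\cap\Lambda_R(I)$ makes sense, is a point the paper's proof leaves implicit, and your argument for it (upgrading $x_n\precsim_1 y_m$ to $x_n\precsim_1 y_{m+1}$ with factors in $M_\infty(I)$ via $y_m=y'_{m+1}y_{m+1}y_m$) is sound.
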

\begin{proof}
	If $(x_{n})$ is in $\preS (I)$, then it is trivially in $\preS (R)$.
	
	Conversely, if $(x_{n})\in\preS (R)$ with $x_{n}\in M_\infty (I)$ for every $n$, we know that $x_{n}= y_{n+1}x_{n+1}x_{n}$ with $y_{n+1}$ possibly not in $M_\infty (I)$. However, one has
	\[
	x_{n}= y_{n+1}x_{n+1}x_{n}= (y_{n+1}y_{n+2}x_{n+2})x_{n+1}x_{n}
	\]
	and, since $y_{n+1}y_{n+2}x_{n+2}\in M_\infty (I)$, it follows that $(x_{n})\in\preS (I)$.
\end{proof}

\begin{lemma}\label{QP_Property}
	Let $I$ be a two-sided quasipure ideal of a ring $R$.~Then, for every $x\in M_\infty(I)$, there exists $(x_{n})\in \preS (I)$ such that $x\precsim_{1} x_{1}$.
\end{lemma}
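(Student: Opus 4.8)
The plan is to construct $(x_n)$ by induction, carrying along at each stage not only the term $x_n$ but also a left local unit $s_n\in M_\infty(I)$ for it (that is, an element with $s_n x_n = x_n$), and to apply the quasipureness of $I$ to the local unit $s_n$ rather than to $x_n$ itself. The reason for tracking the local unit is that the defining relation for $\preS(I)$, namely $x_n = y_{n+1}x_{n+1}x_n$, forces the rightmost factor to be exactly $x_n$; rewriting it as $(y_{n+1}x_{n+1})x_n=x_n$ shows that what is really needed is a left local unit for $x_n$ that simultaneously factors as a \emph{left multiple} of the next term $x_{n+1}$. Producing such an element at every stage is the only delicate point, and it is what the induction is designed to deliver.

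For the base step, I apply the definition of quasipureness (\autoref{def:QuaPurId}) to $x$ to obtain $s_1,x_1\in M_\infty(I)$ with $s_1 x_1 = x_1$ and $x\precsim_1 x_1$. This secures the required inequality $x\precsim_1 x_1$ and supplies the initial local unit $s_1$.

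For the inductive step, assume $x_n\in M_\infty(I)$ has a left local unit $s_n\in M_\infty(I)$. I apply quasipureness to $s_n$, obtaining $s_{n+1},z_n\in M_\infty(I)$ with $s_{n+1}z_n=z_n$ and $s_n\precsim_1 z_n$, say $s_n=a_n z_n b_n$ with $a_n,b_n\in M_\infty(R)$. I then set $x_{n+1}:=z_n b_n$, which lies in $M_\infty(I)$ because $I$ is two-sided, and verify the two needed facts. First, $s_{n+1}x_{n+1}=s_{n+1}z_n b_n=z_n b_n=x_{n+1}$, so $s_{n+1}$ is a left local unit for $x_{n+1}$ and the induction hypothesis is preserved. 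Second, using $x_n=s_n x_n$,
\[
x_n = s_n x_n = a_n z_n b_n x_n = a_n x_{n+1} x_n,
\]
which is exactly the $\preS$-relation $x_n = y_{n+1}x_{n+1}x_n$ with $y_{n+1}=a_n$, and in particular $x_n\precsim_1 x_{n+1}$.

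Iterating produces a sequence $(x_n)$ with every term in $M_\infty(I)$ and satisfying $x_n = y_{n+1}x_{n+1}x_n$ for all $n$, so $(x_n)\in\preS(R)$; since all terms lie in $M_\infty(I)$, \autoref{Ideal_WR} upgrades this to $(x_n)\in\preS(I)$, while the base step gives $x\precsim_1 x_1$, as required. The main obstacle, as flagged above, is arranging the exact $\preS$-relation with rightmost factor $x_n$; this is resolved by applying quasipureness to the local unit $s_n$ instead of to $x_n$ and by folding the right coefficient $b_n$ into the definition of $x_{n+1}=z_n b_n$, so that $x_{n+1}$ automatically inherits the left local unit $s_{n+1}$ from $z_n$ and the chain can be continued.
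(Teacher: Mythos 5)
Your proof is correct and follows essentially the same route as the paper: both arguments iterate quasipureness on the left local units $s_n$ rather than on the terms $x_n$, and your trick of folding the right coefficient $b_n$ into $x_{n+1}:=z_nb_n$ is exactly the content of the implication (i)$\implies$(ii) of \autoref{lem:EqQuasi-Pure}, which the paper invokes instead of inlining. The only cosmetic difference is your base step, which reads $x\precsim_1 x_1$ directly off \autoref{def:QuaPurId} rather than passing through the one-sided factorization $x=rx_1$ of \autoref{lem:EqQuasi-Pure}.
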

\begin{proof}
	Given $x\in M_\infty(I)$, we know from \autoref{lem:EqQuasi-Pure} that there exist elements $r\in M_\infty(R)$ and $x_{1},s_{1}\in M_\infty(I)$ such that
	\[
	x= rx_{1},\quad \text{and}\quad s_{1}x_{1}=x_{1}.
	\]
	
	Using once again that $I$ is quasipure for $s_{1}$, we find elements $y_{2}\in M_\infty(R)$ and $x_{2},s_{2}\in M_\infty(I)$ such that
	\[
	s_{1}= y_{2}x_{2},\quad \text{and}\quad s_{2}x_{2}=x_{2}.
	\]
	
	In particular, one gets that $y_{2}x_{2}x_{1}=s_{1}x_{1}=x_{1}$. Proceeding by induction, we obtain a sequence $(x_{n})\in\preS (R)$ with $x_{n}\in M_\infty (I)$ for all $n$. It follows from \autoref{Ideal_WR} that $(x_{n})\in\preS (I)$.
\end{proof}


In view of \cite[Proposition 1.4]{Herbera2014} and \cite[Theorem 2.4]{Whthead80}, it is natural to define the ideal of $R$ associated to an ideal $J$ of $\SR (R)$ as the two-sided ideal of $R$ generated by all the entries
of matrices appearing in the representatives of the elements of $J$. This is indeed the procedure that we follow here. For an ideal $J$ of $\SR(R)$ define
\[
\mathrm{Idl}_{\mathrm{\SR(R)}}(J):=\{x\in R\mid x\text{ is an entry of } x_1\in M_\infty(R)\text{ such that } [(x_n)]\in J\}.
\]

Note that since $[(x_n)] = [(x_n)_{n\ge k}]$ for each $k\ge 1$, $\mathrm{Idl}_{\mathrm{\SR(R)}}(J)$ is indeed the set of all entries of matrices in $M_{\infty}(R)$ appearing in some representative $(x_n)$ of 
some element of $[(x_n)]\in J$.

\begin{lemma}
	\label{lem:constructingK}
	Let $R$ be a ring, and let $J\subseteq \SR (R)$ be an ideal. Then the following hold:
	\begin{enumerate}[\rm (i)]
		\item  $\mathrm{Idl}_{\SR(R)}(J)$ is a right ideal of $R$.
		\item The left ideal of $R$ generated by $\mathrm{Idl}_{\mathrm{\SR(R)}}(J)$ is a two-sided ideal of $R$, which is a quasipure left ideal. 
	\end{enumerate}
\end{lemma}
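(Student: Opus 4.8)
The plan is to isolate one reusable construction and deploy it for both parts. The construction is this: if $(u_n)\in\preS(R)$ and $T\in M_\infty(R^+)$, then the right-multiple $u_kT$ may be prepended to a tail of $(u_n)$, that is, $(u_kT,u_{k+1},u_{k+2},\dots)\in\preS(R)$. Indeed, right-multiplying the defining identity $u_k=y_{k+1}u_{k+1}u_k$ of $\preS(R)$ by $T$ gives $u_kT=y_{k+1}u_{k+1}(u_kT)$, which is exactly the $\preS(R)$-identity for the first step (and also shows $u_kT\precsim_1 u_{k+1}$); the later steps are inherited. Since $(u_kT,u_{k+1},\dots)\precsim(u_n)$, its class lies below $[(u_n)]$, hence in $J$ by downward heredity whenever $[(u_n)]\in J$. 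Consequently every entry of every right-multiple $u_kT$ belongs to $\mathrm{Idl}_{\SR(R)}(J)$. This already yields the easy half of (i): for $x=(x_k)_{a,b}$ with $[(x_n)]\in J$ and $r\in R$, the right-multiple $x_k(rE_{b,q})$ has $xr$ as its $(a,q)$-entry, so $\mathrm{Idl}_{\SR(R)}(J)$ is closed under right multiplication; and $x_k(E_{b,q}+E_{d,q})$ realizes the sum of two entries lying in a common row.

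The main obstacle is closure under addition of two arbitrary entries $x=(x_k)_{a,b}$ and $x'=(x'_{k'})_{c,d}$, which in general occupy different rows of different terms; the difficulty is that the $\preS(R)$-identity is one-sided, so left multiplication (needed to merge rows) destroys insertability. I would first move both entries into a single term via the \emph{diagonal} sequence $w_n:=x_{k+n-1}\oplus x'_{k'+n-1}$: being a direct sum of two $\preS(R)$-sequences it lies in $\preS(R)$, one checks $[(w_n)]\leq[(x_n)]+[(x'_n)]\in J$, and $w_1=x_k\oplus x'_{k'}$ contains $x$ (top block) and $x'$ (bottom block). After padding with zeros I may assume the indices $a,b$ (top) and $c,d$ (bottom) are pairwise distinct. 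I then conjugate the whole sequence by the transvection $1+E_{a,c}$; this is a well-defined operation on $M_\infty(R)$ altering only finitely many entries, it preserves membership in $\preS(R)$ and the equivalence class (a routine computation using that $R$ is an ideal of $R^+$), and because $w_1$ is block diagonal the relevant cross-terms vanish, so it fixes $x$ at $(a,b)$ while moving $x'$ to $(a,d)$. Now $x$ and $x'$ share row $a$ of $\bar w_1:=(1+E_{a,c})w_1(1-E_{a,c})$, and the right-multiple $\bar w_1(E_{b,1}+E_{d,1})$ has $x+x'$ as its $(a,1)$-entry, which lies in $\mathrm{Idl}_{\SR(R)}(J)$ by the first paragraph. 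The delicate points are exactly the invariance of $\preS(R)$ and $\sim$ under transvection conjugation and the bookkeeping that makes the cross-terms disappear.

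For (ii), write $K$ for the left ideal generated by $\mathrm{Idl}_{\SR(R)}(J)$. That $K$ is two-sided is immediate from (i): since $\mathrm{Idl}_{\SR(R)}(J)$ is a right ideal, $KR=R^+\,\mathrm{Idl}_{\SR(R)}(J)\,R\subseteq R^+\,\mathrm{Idl}_{\SR(R)}(J)=K$. For quasipureness I would verify condition (ii) of \autoref{lem:EqQuasi-Pure}. Given $x\in M_\infty(K)$, each of its finitely many entries is a left $R^+$-combination of elements of $\mathrm{Idl}_{\SR(R)}(J)$, i.e.\ of entries of terms of finitely many $\preS(R)$-sequences in $J$. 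Taking the direct sum of these sequences and aligning indices as in the diagonal construction above, I obtain a single $(X_n)\in\preS(R)$ with $[(X_n)]\in J$ all of whose relevant entries appear in one term $X:=X_k$. The $\preS(R)$-identity $X=(Y_{k+1}X_{k+1})X=:SX$ supplies the purity, and crucially $S=Y_{k+1}X_{k+1}$ has entries in $R\cdot\mathrm{Idl}_{\SR(R)}(J)\subseteq K$, so $S\in M_\infty(K)$. Writing $x=PX^{\oplus m}Q$ with $P,Q\in M_\infty(R^+)$ and putting $y:=X^{\oplus m}Q$, I get $x=Py$ and $S^{\oplus m}y=y$, where $y\in M_\infty(\mathrm{Idl}_{\SR(R)}(J))\subseteq M_\infty(K)$ (again using that $\mathrm{Idl}_{\SR(R)}(J)$ is a right ideal, so $X^{\oplus m}Q$ stays inside it) and $S^{\oplus m}\in M_\infty(K)$. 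This is precisely condition (ii) of \autoref{lem:EqQuasi-Pure}, whence $K$ is quasipure. As in part (i), the main obstacle is the collection step: converting finitely many entries drawn from different sequences and different levels into entries of a single term of a single element of $J$.
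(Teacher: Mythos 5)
Your proposal is correct, and at its heart it is the paper's own argument in different packaging. For (i), the paper also runs on a transvection trick: it first conjugates by partial permutation matrices $\sigma,\tau\in M_\infty(\Z)^+\subseteq M_\infty(R^+)$ to move both distinguished entries to position $(1,1)$ (replacing the sequence by $(\sigma x_1\tau,\,x_2\sigma^{-1},\,x_3,\dots)$ so the $\preS$-identities survive), and then multiplies on the left by $P=\left(\begin{smallmatrix}1&1\\0&1\end{smallmatrix}\right)$, rebalancing the $\preS$-identity as $\tilde u_1=\tilde w_2\tilde u_2\tilde u_1$ with $\tilde w_2=P(z_2\oplus t_2)$ and $\tilde u_2=(x_2\oplus y_2)P^{-1}$. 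Your conjugation of the whole sequence by $1+E_{a,c}$, together with your prepend-a-right-multiple lemma, is the same maneuver; the prepend lemma itself appears in the paper's proof of closure under right multiplication, via the sequence $(x_1r,x_2,x_3,\dots)$. For (ii), the paper verifies the original definition of quasipurity by a stepwise reduction (matrix to entries via \autoref{lem:decomposable}, then elements of $K$ to elements of $\mathrm{Idl}_{\SR(R)}(J)$ via $x\precsim_1\oplus_i a_i$) and then takes $y=x_2$, $s=b_3x_3$ from a single sequence, using the tail observation that all entries of all terms lie in $\mathrm{Idl}_{\SR(R)}(J)$; your one-shot verification of condition (ii) of \autoref{lem:EqQuasi-Pure} through the block identity $x=PX^{\oplus m}Q$ fuses those reductions and is equally valid. (One small bookkeeping point there: since $Q$ has entries in $R^+$ rather than $R$, the membership $y=X^{\oplus m}Q\in M_\infty(\mathrm{Idl}_{\SR(R)}(J))$ follows most cleanly from your own prepend construction with $T=Q\in M_\infty(R^+)$, not from the right-ideal property of part (i), which only covers multipliers from $R$.)

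The one step that needs repair is your claim in (i) that ``after padding with zeros'' you may assume $a,b,c,d$ pairwise distinct. The automatic block-offset already forces $a\neq c$, $b\neq d$, $b\neq c$, $a\neq d$; the problematic coincidence is $c=d$, i.e., $x'$ sitting on the diagonal of its block --- which is the generic case (for instance both entries at position $(1,1)$). Square zero-padding never moves entries, so it cannot break $c=d$; and when $c=d$ your conjugation yields $(\bar w_1)_{a,d}=x'-(w_1)_{a,a}$ rather than $x'$, since the term $-w_1E_{a,c}$ contributes $(w_1)_{a,a}$ at position $(a,d)$. The fix is one line using machinery you already set up: before forming the diagonal sum, replace the second sequence by $(x'_{k'}E_{d_0,q},\,x'_{k'+1},\dots)$ for a partial permutation $E_{d_0,q}\in M_\infty(R^+)$ with $q\neq c_0$, which moves $x'$ off the diagonal and stays in $\preS(R)$ with class in $J$ by your prepend lemma; alternatively, conjugate by permutations exactly as the paper does with $\sigma,\tau$. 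With that emendation your proof is complete.
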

\begin{proof} (i): Set $I:= \mathrm{Idl}_{\mathrm{\SR(R)}}(J)$. 
	We first show that $I$ is additive. To see this, let $x,y\in \mathrm{Idl}_{\SR(R)}(J)$ and choose $[(x_n)], [(y_n)]\in J$ such that $x$ is the $(i,j)$ entry of $x_1$ and $y$ the $(k,l)$ entry of $y_1$, for some $i,j,k,l$, and with $x_1, y_1$ matrices of the same size (after adding zeros if necessary). We observe next that we can assume without loss of generality that $(i,j)=(1,1)= (k,l)$. Let $\sigma,\tau\in M_{\infty}(\Z)^+
	\subseteq M_{\infty}(R^+)^+$ suitable permutation matrices so that $(\sigma x_1 \tau)_{1,1} = (x_1)_{i,j}=x$. Then $(\sigma x_1\tau, x_2\sigma^{-1},x_3,\dots )\in \mathcal S (R)$ and $[(\sigma x_1\tau, x_2\sigma^{-1},x_3,\dots )] = [(x_n)]$. Since a similar operation can be done with $(y_n)$, we have shown our claim.

	Now, write $x_1=z_2x_2x_1$, and $y_1=t_2y_2y_1$, and let $P\in M_\infty(R^+)$ be given by $P=\left(\begin{smallmatrix} 1 & 1\\ 0 & 1\end{smallmatrix}\right)$ (where $1$ denotes the identity matrix of suitable size). Now
	\[
	\left(\begin{array}{cc} x_1+y_1 & 0\\ y_1 & 0\end{array}\right)=P\left(\begin{array}{cc} x_1 & 0\\ y_1 & 0\end{array}\right)=P\left(\begin{array}{cc} z_2 & 0\\0 & t_2\end{array}\right)\left(\begin{array}{cc} x_2 & 0\\ 0 & y_2\end{array}\right)P^{-1}P\left(\begin{array}{cc} x_1 & 0\\ y_1 & 0\end{array}\right),
	\]
	which implies that, setting $\tilde{w}_2=P\left(\begin{smallmatrix} z_2 & 0 \\ 0 & t_2\end{smallmatrix}\right)$, $\tilde{u}_1=\left(\begin{smallmatrix}x_1+y_1 & 0 \\ y_1 & 0\end{smallmatrix}\right)$, and $\tilde{u}_2=\left(\begin{smallmatrix} x_2 & 0 \\ 0 & y_2\end{smallmatrix}\right)P^{-1}$,  we have $\left(\begin{smallmatrix}x_1+y_1 & 0 \\ y_1 & 0\end{smallmatrix}\right)=\tilde{w}_2\tilde{u}_2\left(\begin{smallmatrix}x_1+y_1 & 0 \\ y_1 & 0\end{smallmatrix}\right)$. Also, $\tilde{u}_2=(z_3\oplus t_3)(x_3\oplus y_3)\tilde{u}_2$. This implies that the sequence $(\tilde{u}_1,\tilde{u}_2,x_3\oplus y_3,\dots)$ is equivalent to $(x_n\oplus y_n)$ and therefore $x+y\in I$.
	
	Now we show that $I$ is closed under right multiplication. To this end, let $x\in \mathrm{Idl}_{\SR(R)}(J)$ and $r\in R$. Then there is $[(x_n)]\in J$  such that $x$ is an entry of $x_1$. Note that, since there is a sequence $(y_n)\in M_\infty(R)$ such that $y_{n+1}x_{n+1}x_n=x_n$, we have in particular that $y_2x_2(x_1r)=x_1r$. This implies that in fact $[(x_1r,x_2,x_3,\dots)]=[(x_n)]$, and $xr$ is of course an entry of $x_1r$. Thus $xr\in I$.
	
	(ii): Since $I$ is a right ideal, the left ideal $K:=R^+I$ generated by $I$ is a two-sided ideal of $R$. To see that $K$ is quasipure, let $x\in M_n (K)$, and let $x_{ij}\in K$ denote its entries.
	Suppose that for each $i,j$ there are $b_{ij}, s_{ij}\in M_{\infty}(K)$ such that $a_{ij}\precsim_1 b_{ij}$ in $M_{\infty}(R)$ and $s_{ij}b_{ij}= b_{ij}$. By (the proof of) \autoref{lem:decomposable}(ii) 
	we have that
	$$ x= (x_{ij}) \precsim_1 \oplus _{i,j} b_{ij}$$
	in $M_{\infty}(R)$. Since in addition $ \oplus _{i,j} b_{ij} = (\oplus_{i,j} s_{ij})(\oplus_{i,j} b_{ij})$, we conclude that $x$ satisfies the definition of quasipurity. Hence we may reduce to consider the case where $x\in K$. 
	Since $K$ is the left ideal generated by $I= \mathrm{Idl}_{\mathrm{\SR(R)}}(J)$, we can write $x= \sum_{i=1}^k r_ia_i$, where $r_i\in R^+$ and $a_i\in I$. Again, since $x\precsim _1 \oplus_{i=1}^k a_i$
	in $M_{\infty} (R^+)$, we can assume that $x\in I$. Assuming that $x\in I$, there is $(x_n)$ with $[(x_n)]\in J$ such that $x$ is an entry of $x_1$. Then $x_1\precsim_1 x_2$ in $M_{\infty}(R)$ and $(b_3x_3)x_2= x_2$ for some $b_3\in M_{\infty}(R)$.
	By \autoref{lem:decomposable}(i), $x\precsim _1 x_2$. Moreover $x_2,b_3x_3\in M_{\infty}(K)$ and $(b_3x_3)x_2 = x_2$. Hence the condition of quasipurity is satisfied by $x$, as desired. 
\end{proof}

The construction in \autoref{lem:constructingK} serves as motivation to consider, for any ideal $J$ of $\SR(R)$, the two-sided ideal $\Tr_R (J):=R^+ \mathrm{Idl}_{\mathrm{\SR(R)}}(J)$. We will refer to $\Tr _R(J)$ as the \emph{trace ideal associated to $J$}. Observe that $\Tr _R(J)$ is the trace ideal of some projective $R$-module, by virtue of \autoref{lma:CarProj}.

\begin{theorem}\label{Bij_WR}
Let $R$ be any ring. Then, the lattices ${\rm Lat}_{\rm qp}(R)$ and ${\rm Lat}(\SR (R))$ are isomorphic.
\end{theorem}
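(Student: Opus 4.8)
The plan is to mimic the proof of \autoref{Bij_SR}, replacing $\Lambda(R)$ by $\SR(R)$, the assignment $I\mapsto\Lambda_R(I)$ by $I\mapsto\SR(I)$, and $\mathrm{Idl}(-)$ by the trace ideal $\Tr_R(-)$. Concretely, I would introduce two order-preserving maps $\psi_{\SR}\colon \mathrm{Lat}_{\mathrm{qp}}(R)\to\mathrm{Lat}(\SR(R))$, $I\mapsto\SR(I)$, and $\phi_{\SR}\colon\mathrm{Lat}(\SR(R))\to\mathrm{Lat}_{\mathrm{qp}}(R)$, $J\mapsto\Tr_R(J)$. Using \autoref{Ideal_WR} I view $\SR(I)=\SR(R)\cap\Lambda_R(I)$ inside $\SR(R)$, and I would first check it is genuinely an ideal of $\SR(R)$: downward heredity and the subsemigroup property are immediate from the definitions, while closure under suprema follows from the explicit diagonal construction of suprema in $\SR(R)$ (the representing sequence has all entries in $M_\infty(I)$). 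The map $\phi_{\SR}$ lands in $\mathrm{Lat}_{\mathrm{qp}}(R)$ by \autoref{lem:constructingK}, and both maps are visibly inclusion-preserving. As in \autoref{Bij_SR}, an order isomorphism between lattices is automatically a lattice isomorphism, so it suffices to prove that $\psi_{\SR}$ and $\phi_{\SR}$ are mutually inverse.

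For $\phi_{\SR}\circ\psi_{\SR}=\mathrm{id}$ I must show $\Tr_R(\SR(I))=I$ for each quasipure ideal $I$. The inclusion $\subseteq$ is easy: every element of $\mathrm{Idl}_{\SR(R)}(\SR(I))$ is an entry of a matrix in $M_\infty(I)$, hence lies in $I$, so $\Tr_R(\SR(I))=R^+\mathrm{Idl}_{\SR(R)}(\SR(I))\subseteq R^+I=I$. For $\supseteq$ I would invoke \autoref{QP_Property}: given $x\in I$ there is $(x_n)\in\preS(I)$ with $x\precsim_1 x_1$; writing $x=sx_1t$ in $M_\infty(R)$ exhibits $x$ as a finite $R^+$-combination of entries of $x_1$, each of which lies in $\mathrm{Idl}_{\SR(R)}(\SR(I))$, and since the latter is a right ideal (\autoref{lem:constructingK}(i)) we conclude $x\in\Tr_R(\SR(I))$.

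The substantial direction is $\psi_{\SR}\circ\phi_{\SR}=\mathrm{id}$, that is $\SR(\Tr_R(J))=J$; write $K:=\Tr_R(J)$. The inclusion $J\subseteq\SR(K)$ is straightforward: if $[(x_n)]\in J$, then all entries of all $x_n$ lie in $\mathrm{Idl}_{\SR(R)}(J)\subseteq K$, so $(x_n)\in\preS(R)$ with $x_n\in M_\infty(K)$, whence $(x_n)\in\preS(K)$ by \autoref{Ideal_WR}. The reverse inclusion $\SR(K)\subseteq J$ is where the real work lies, and I expect it to be the main obstacle, exactly as the corresponding step was in \autoref{Bij_SR}. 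The key observation, combining the two parts of \autoref{lem:decomposable}, is a bound at the level of \emph{terms}: if $a$ is an entry of the first term of some $[(e_m)]\in J$, then $a\precsim_1 e_2$ by \autoref{lem:decomposable}(i); consequently any entry of $x_n$, being an $R^+$-combination of such entries $c_k$ coming from sequences $[(e^{(k)}_m)]\in J$, satisfies $\precsim_1\bigoplus_k e^{(k)}_2$, which is the second term of $\sum_k[(e^{(k)}_m)]\in J$.

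Assembling these entrywise estimates through \autoref{lem:decomposable}(ii) then produces, for each $n$, an element $w_n\in J$ with $x_n\precsim_1$ a term of a representative of $w_n$. Setting $s:=\sup_N\sum_{n=1}^N w_n$, which lies in $J$ since $J$ is closed under suprema of increasing sequences, one gets $[(x_n)]\leq s$, and downward heredity of $J$ forces $[(x_n)]\in J$. This establishes $\SR(K)\subseteq J$, hence $\SR(\Tr_R(J))=J$, and completes the proof that $\psi_{\SR}$ and $\phi_{\SR}$ are mutually inverse order isomorphisms, giving $\mathrm{Lat}_{\mathrm{qp}}(R)\cong\mathrm{Lat}(\SR(R))$.
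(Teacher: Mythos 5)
Your proposal is correct and follows essentially the same route as the paper's proof: the same pair of maps $I\mapsto\SR(I)$ and $J\mapsto\Tr_R(J)$, with $\Tr_R(\SR(I))=I$ via \autoref{QP_Property} and $\SR(\Tr_R(J))=J$ via \autoref{Ideal_WR}, the entrywise estimates from \autoref{lem:decomposable}, and the concluding supremum-plus-downward-heredity argument. The only differences are cosmetic: you package the paper's triple sums into the elements $w_n$ and spell out a few details (e.g.\ that $x\precsim_1 x_1$ exhibits $x$ as an $R^+$-combination of entries of $x_1$) that the paper leaves implicit.
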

\begin{proof}
Similar to the proof of \autoref{Bij_SR}, we define the maps
\[
\xymatrixrowsep{0pc}
\xymatrix{
	{\rm Lat}_{\rm qp}(R) \ar[r]^-{\psi_{S}} & {\rm Lat}(\SR (R)) \\
	I \ar@{|->}[r] & \SR (I)
}
\text{ and } 
\xymatrixrowsep{0pc}
\xymatrix{
	{\rm Lat}(\SR (R)) \ar[r]^-{\phi_{\SR }} & {\rm Lat}_{\rm qp}(R)\\
	J \ar@{|->}[r] & \Tr _R(J).
}
\]
First, let us see that $\phi_{\SR }\psi_{\SR }(I)=I$ whenever $I$ is quasipure, that is, 
\[
\Tr _R (\SR (I))=I.
\]
Since by definition $\mathrm{Idl}_{\SR(R)}(\SR(I))\subseteq I$, the inclusion $(\subseteq)$ is clear. Thus, let $x\in I$. We know from  \autoref{QP_Property} that there exists $x_{1}\in\mathrm{Idl}_{\SR(R)}(\SR(I))$ such that $x\precsim_1 x_{1}$. By definition, this implies that $x\in \Tr _R (S(I))$.

We now prove that $\psi_{\SR }\phi_{\SR }(J)=J$ for any ideal $J\subseteq \SR (R)$, that is, 
\[
S(\Tr_R (J))=J.
\]	
Let $[(x_n)]\in J$. Then all entries of each $x_n$ belong to $\Tr _R(J)$, and using \autoref{Ideal_WR} we conclude that $[(x_n)]\in \SR(\Tr_R(J))$.

Next, let $[(x_{n})]\in \SR(\Tr _R (J))$. By definition, for each $n$, the $(i,j)$ entry of the matrix $x_n\in M_{s_n}(R)$ has the form $(x_n)_{ij}=\sum_{k=1}^{l_{i,j,n}}r_{i,j,n}^{(k)} a_{i,j,n}^{(k)}$, where $r_{i,j,n}^{(k)}\in R^+$ and $a_{i,j,n}^{(k)}\in \mathrm{Idl}_{\SR(R)}(J)$. Thus, each $a_{i,j,n}^{(k)}$ is an entry of a matrix $y_{i,j,n,1}^{(k)}$ which is part of a sequence $[(y_{i,j,n,m}^{(k)})_m]\in J$, for $k=1,\dots,l_{i,j,n}$. Thus, for each $i,j,n,k$ we have, using \autoref{lem:decomposable} (i), that $a_{i,j,n}^{(k)}\precsim_1 y_{i,j,n,2}^{(k)}$ and therefore $(x_n)_{ij}\precsim_1\oplus_ky_{i,j,n,2}^{(k)}$. Now, the argument in \autoref{lem:decomposable} (ii) implies that $x_n\precsim_1\oplus_{i,j=1}^{s_n}\oplus_{k=1}^{l_{i,j,n}} y_{i,j,n,2}^{(k)}\precsim_1\oplus_{r\leq n}\oplus_{i,j=1}^{s_r}\oplus_{k}^{l_{i,j,r}} y_{i,j,r,2}^{(k)}$. Therefore
\[
[(x_n)]\leq \sup_n\sum_{r\leq n}\sum_{i,j=1}^{s_r}\sum_{k=1}^{l_{i,j,r}}[(y_{i,j,r,m}^{(k)})_m]\in J.
\]
Since $J$ is downward hereditary, we have $[(x_n)]\in J$, as desired.
\end{proof}
\begin{remark}
Let $R$ be a unital ring. Let $(x_n)_n\in \preS (R)$, and identify $(x_n)_n$ with a countably generated projective $R$-module $P$. A combination of the isomorphism $\CP(R)\cong\SR(R)$ (\cite[Theorem 4.13]{AntAraBosPerVil23:CuRing}) with \cite[Theorem~2.4]{Whthead80} shows that the map $\phi_S$ defined in the proof of \autoref{Bij_WR} sends the ideal generated by $[P]$, that is, the set $\{ [(y_n)_n]\colon  [(y_n)_n]\leq \sup_k k[(x_m)_m] \}$ to its trace ideal, that is, $\mathrm{tr} (P)= \Tr _R (\langle [(x_n)] \rangle )$.
\end{remark}
\begin{theorem}
\label{thm:retract}	
Let	$R$ be any ring. Then there are order preserving maps
\[
\varphi\colon\mathrm{Lat}_{\mathrm{qp}}(R)\to\mathrm{Lat}_{\mathrm{d}}(R)\text{ and }\psi\colon \mathrm{Lat}_{\mathrm{d}}(R)\to\mathrm{Lat}_{\mathrm{qp}}(R)
\]
such that
\begin{enumerate}[{\rm(i)}]
	\item $\psi\circ\varphi=\mathrm{id}$ and $\varphi\circ\psi\leq\mathrm{id}$.
	\item $\varphi$ preserves suprema.
	\item $\psi$ preserves infima.
\end{enumerate}
In particular, as a partially ordered set, $\mathrm{Lat}_{\mathrm{qp}}(R)$ is a retract of $\mathrm{Lat}_{\mathrm{d}}(R)$.
\end{theorem}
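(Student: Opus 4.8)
The plan is to take $\varphi$ to be simply the inclusion $\mathrm{Lat}_{\mathrm{qp}}(R)\hookrightarrow\mathrm{Lat}_{\mathrm{d}}(R)$, which is legitimate and order preserving since $\mathrm{Lat}_{\mathrm{qp}}(R)$ was defined as a subset of $\mathrm{Lat}_{\mathrm{d}}(R)$, and to define $\psi$ by cutting a decomposable ideal down to $\SR(R)$ and taking the associated trace ideal: for $I\in\mathrm{Lat}_{\mathrm{d}}(R)$ I set
\[
\psi(I):=\Tr_R\big(\SR(R)\cap\Lambda_R(I)\big).
\]
Before anything else I must check that $\SR(R)\cap\Lambda_R(I)$ is an ideal of $\SR(R)$. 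It is a downward hereditary subsemigroup by the argument of \autoref{lma:ideal} (using that $\precsim_1$ keeps matrices inside $M_\infty(I)$), and I would deduce that it is closed under suprema of increasing sequences from the fact that the inclusion $\SR(R)\hookrightarrow\Lambda(R)$ preserves such suprema: an increasing sequence in $\SR(R)\cap\Lambda_R(I)$ then has one and the same supremum whether computed in $\SR(R)$ or in $\Lambda(R)$, and the latter lies in $\Lambda_R(I)$ because $\Lambda_R(I)$ is an ideal of $\Lambda(R)$. Granting this, \autoref{Ideal_WR} gives $\SR(R)\cap\Lambda_R(I)=\SR(I)$, and \autoref{lem:constructingK}(ii) shows $\psi(I)=\Tr_R(\SR(I))$ is a two-sided quasipure ideal; monotonicity of $\psi$ is clear.

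The core of the argument is to prove that $(\varphi,\psi)$ is a Galois adjunction, i.e. that for every quasipure $I'$ and decomposable $I$,
\[
I'\subseteq I \iff I'\subseteq\psi(I).
\]
For the forward direction, $I'\subseteq I$ gives $\Lambda_R(I')\subseteq\Lambda_R(I)$ and hence $\psi(I')\subseteq\psi(I)$ by monotonicity; since $\psi(I')=\Tr_R(\SR(I'))=I'$ by \autoref{Bij_WR}, I conclude $I'\subseteq\psi(I)$. For the reverse direction it is enough to observe that $\psi(I)\subseteq I$ always holds: every generator of $\Tr_R(\SR(R)\cap\Lambda_R(I))=R^+\,\mathrm{Idl}_{\SR(R)}(\SR(R)\cap\Lambda_R(I))$ is an entry of some $x_1$ with $[(x_n)]\in\Lambda_R(I)$, hence lies in $I$, and $R^+I=I$. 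These same two computations already yield (i): taking $I=I'$ in the second one gives $\varphi\circ\psi\le\mathrm{id}$, while the identity $\psi(I')=I'$ gives $\psi\circ\varphi=\mathrm{id}$.

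Properties (ii) and (iii) then follow formally: a lower adjoint between partially ordered sets preserves every existing supremum and an upper adjoint preserves every existing infimum, so from $\varphi\dashv\psi$ I get that $\varphi$ preserves suprema and $\psi$ preserves infima. Both lattices are in fact complete---$\mathrm{Lat}_{\mathrm{d}}(R)\cong\mathrm{Lat}(\Lambda(R))$ and $\mathrm{Lat}_{\mathrm{qp}}(R)\cong\mathrm{Lat}(\SR(R))$ by \autoref{Bij_SR} and \autoref{Bij_WR}, and arbitrary intersections of ideals of a $\Q$-semigroup are again ideals---so all suprema and infima exist and are preserved. Together with $\psi\circ\varphi=\mathrm{id}$ this exhibits $\mathrm{Lat}_{\mathrm{qp}}(R)$ as a retract of $\mathrm{Lat}_{\mathrm{d}}(R)$. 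I expect the one delicate point to be the well-definedness of $\psi$, namely that $\SR(R)\cap\Lambda_R(I)$ is closed under suprema in $\SR(R)$; this rests on the compatibility of the suprema of $\SR(R)$ with those of $\Lambda(R)$ (equivalently, that $\SR(R)\hookrightarrow\Lambda(R)$ preserves suprema of increasing sequences). Everything else is either monotonicity, the elementary inclusion $\psi(I)\subseteq I$, or a direct appeal to \autoref{Bij_WR} and \autoref{lem:constructingK}.
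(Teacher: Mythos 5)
Your proposal is correct, and although it looks different on the surface, the maps you define coincide with the paper's own under its identifications, so this is essentially the same proof organized differently. The paper transports everything through Theorems \ref{Bij_SR} and \ref{Bij_WR} and works on the semigroup side: there $\varphi(J)=\{z\in \Lambda(R)\mid z\leq y \text{ for some } y\in J\}$, the downward closure in $\Lambda(R)$ of an ideal $J$ of $\SR(R)$, and $\psi(K)=K\cap \SR(R)$; it verifies (i) directly and notes that (ii) and (iii) follow easily. Translating back through the lattice isomorphisms, the paper's $\psi$ is exactly your $I\mapsto \Tr_R\bigl(\SR(R)\cap\Lambda_R(I)\bigr)=\Tr_R(\SR(I))$, and the paper's $\varphi$ is your inclusion, because for quasipure $I$ the downward closure of $\SR(I)$ in $\Lambda(R)$ is all of $\Lambda_R(I)$ by cofinality (\autoref{QP_Property}, later packaged in \autoref{thm:LatIsoIdeals}). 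What your route buys is economy of verification: you bundle (i)--(iii) into the single adjunction $I'\subseteq I \iff I'\subseteq \psi(I)$, whose two halves reduce to facts already proved in the paper, namely $\Tr_R(\SR(I'))=I'$ for quasipure $I'$ (the first half of the proof of \autoref{Bij_WR}) and the elementary inclusion $\mathrm{Idl}_{\SR(R)}(\SR(I))\subseteq I$; and since monotone maps satisfying $\psi\circ\varphi=\mathrm{id}$ and $\varphi\circ\psi\leq\mathrm{id}$ automatically form such an adjunction, your formal derivation of (ii) and (iii) is the same mechanism implicitly behind the paper's remark that they follow from (i). The one delicate point you flag---that $\SR(R)\cap\Lambda_R(I)$ is closed under suprema of increasing sequences, equivalently that suprema in $\SR(R)$ agree with those computed in $\Lambda(R)$---is indeed available: it is the content of \cite[Lemma 4.3]{AntAraBosPerVil23:CuRing} as invoked in \autoref{rmk:prec} and in the proof of \autoref{thm:dense}, and it also follows from \autoref{thm:FunctorSQ}(i) since increasing sequences are in particular weakly increasing; combined with \autoref{Ideal_WR} this makes your $\psi$ well defined, and \autoref{lem:constructingK}(ii) places its values in $\mathrm{Lat}_{\mathrm{qp}}(R)$, exactly as you say. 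So there is no gap.
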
	
\begin{proof}
By Theorems \ref{Bij_SR} and \ref{Bij_WR}, it suffices to show the conclusions of the statement replacing  $\mathrm{Lat}_{\mathrm{qp}}(R)$ by $\mathrm{Lat}(\SR(R))$ and $\mathrm{Lat}_{\mathrm{d}}(R)$ by  $\mathrm{Lat}(\Lambda(R))$.

Upon these identifications, define $\varphi\colon\mathrm{Lat}(\SR(R))\to \mathrm{Lat}(\Lambda(R))$ by 
\[
\varphi(J)=\{z\in \Lambda(R)\mid z\leq y\text{ for some }y\in J\},
\]
which is easily verified to be downward hereditary and closed under addition. If $(z_n)$ is an increasing sequence in $\varphi(J)$, then find $y_n\in J$ such that $z_n\leq y_n$ for each $n$. Then the sequence $(w_n)$ given by $w_n=\sum_{i=1}^{n}y_i$ is increasing in $J$, and if $w=\sup w_n$, clearly $z_n\leq w$ for all $n$, whence $\sup z_n\leq w$. This shows that $\varphi(J)$ is an ideal of $\Lambda(R)$. It is clear that $\varphi$ is order-preserving.

Define $\psi\colon\mathrm{Lat}(\Lambda(R))\to\mathrm{Lat}(\SR(R))$ by $\psi(K)=K\cap\SR(R)$. It is clear that this is a downward hereditary submonoid, also  closed under suprema of increasing sequences. Therefore, it is an ideal of $\SR (R)$.	

Let us verify that (i) holds. Once this is shown, (ii) and (iii) follow easily. Thus, let $J$ be an ideal of $\SR(R)$ and let $x\in \SR(R)\cap\{z\in\Lambda(R)\mid z\leq y\text{ for } y\in J\}$. Since $J$ is an ideal and $x\in \SR(R)$, we have $x\in J$, whence $\SR(R)\cap\varphi(J)\subseteq J$. As the other inclusion is trivial, we have $\psi\circ\varphi=\mathrm{id}$.

For the second part of (i), just note that $\varphi(\psi(K))=\{z\in\Lambda(R)\mid z\leq y \text{ for } y\in K\cap\SR(R)\}\subseteq K$, whenever $K$ is an ideal of $\Lambda(R)$.
\end{proof}
\begin{remark}\label{QP_Kernel}
Let $I$ be a decomposable two-sided ideal of a  ring $R$. (Recall that, if $R$ is weakly $s$-unital, then $I$ can be any ideal.)  By \autoref{thm:retract}, in combination with \autoref{Bij_WR}, there is a unique quasipure ideal $J\subseteq I$ such that $\SR(I)=\Lambda_R(I)\cap\SR(R)=\SR(J)$.	(Note that, in the notation of \autoref{thm:retract}, $J=\psi(I)$, and its uniqueness is given by the fact that $\varphi$ is injective.) The ideal $J= \Tr_R(S(I))$ is the largest trace ideal of $R$ contained in $I$. 
\end{remark}
\section{Quotients by decomposable and quasipure ideals}\label{sec:QuotientsIdeals}

In this section we analyse how the semigroup constructions developed in Sections \ref{sec:decomposable} and \ref{sec:pure} behave with respect to quotients by decomposable and quasipure ideals, respectively; see \autoref{thm:quo}.
\begin{parag}[Quotients in ordered semigroups]
	Let $S$ be a $\Q$-semigroup, and let $J$ be an ideal of $S$. For $x,y\in S$, we define
	\[
	\begin{split}
		x\leq_{J} y &:\!\iff x\leq y+z,\, z\in J,\\
		x\sim_{J} y &:\!\iff x\leq_{J} y \text{ and } y\leq_{J} x.
	\end{split}
	\]
	We denote the quotient $S/{\sim_J}$ by $S/J$ and its elements by $x_J$, for $x\in S$, and we equip $S/J$ with the addition and order induced by the addition in $S$ and $\leq_{J}$, respectively. Using the techniques from \cite[Lemma~5.1.2]{APT-Memoirs2018}, one sees that the quotient $S/J$ is a partially ordered monoid that also satisfies axioms (O1) and (O4). In fact, if $S$ is already a $\Cu$-semigroup, then \cite[Lemma~5.1.2]{APT-Memoirs2018} shows that $S/J$ as defined above is also a $\Cu$-semigroup.
	
	We define the relation 
	\[ x_J\prec_J y_J :\!\iff x\leq y'+z \textrm { and } y' \prec y + w \textrm { for some } y'\in S,\ z,w\in J.\] 
	One can routinely check that this is well defined and is an additive auxiliary relation on $S/J$.
	
	The natural quotient map $\pi_J\colon S\to S/J$, given by $\pi_J(x)=x_J$, is then  a semigroup morphism that preserves suprema of increasing sequences and the auxiliary relation.
\end{parag}
\begin{lemma}
	\label{lma:auxi}
	Let $R$ be any ring, and let $I$ be a decomposable two-sided ideal of $R$. Let $\pi\colon R\to R/I$ denote the quotient map (and any of its amplifications to matrices). If, for $x,y\in M_\infty(R)$, we have $\pi(x)\precsim_1\pi(y)$, then there is $z\in M_\infty(I)$ such that $x\precsim_1 y\oplus z$.
\end{lemma}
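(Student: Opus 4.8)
The plan is to lift the witnesses of the comparison from $M_\infty(R/I)$ to $M_\infty(R)$, absorb the resulting error term (which automatically lands in $M_\infty(I)$) by invoking decomposability, and finally splice the two comparisons together via the block-matrix identity already used in \autoref{par:ideals}.

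First I would unwind the hypothesis. Since $\pi(x)\precsim_1\pi(y)$ in $M_\infty(R/I)$, there exist $\bar s,\bar t\in M_\infty(R/I)$ with $\pi(x)=\bar s\,\pi(y)\,\bar t$. As the quotient map $R\to R/I$, and hence each of its amplifications $M_\infty(R)\to M_\infty(R/I)$, is surjective, I can pick lifts $s,t\in M_\infty(R)$ with $\pi(s)=\bar s$ and $\pi(t)=\bar t$. Then $\pi(x)=\pi(syt)$, so $w:=x-syt$ satisfies $\pi(w)=0$, i.e.\ $w\in M_\infty(I)$. This yields the decomposition $x=syt+w$, where $syt\precsim_1 y$ (directly, with witnesses $s,t$) and $w\in M_\infty(I)$.

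Next I would apply decomposability of $I$ to $w\in M_\infty(I)$ to obtain $z\in M_\infty(I)$ with $w\precsim_1 z$, say $w=azb$ for some $a,b\in M_\infty(R)$. It then remains to combine $syt\precsim_1 y$ and $w\precsim_1 z$ into a single comparison with the orthogonal sum $y\oplus z$. This is the block-matrix computation
\[
x=syt+w=\begin{pmatrix} s & a\\ 0 & 0\end{pmatrix}\begin{pmatrix} y & 0\\ 0 & z\end{pmatrix}\begin{pmatrix} t & 0\\ b & 0\end{pmatrix}\precsim_1 \begin{pmatrix} y & 0\\ 0 & z\end{pmatrix}=y\oplus z,
\]
which exhibits the desired $z\in M_\infty(I)$, since the middle factor equals $\operatorname{diag}(y,z)$ and is flanked by elements of $M_\infty(R)$.

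I do not anticipate a serious obstacle: the only points needing (routine) care are the surjectivity of the matrix amplification of $\pi$, used to produce the lifts $s,t$, and checking that the displayed triple product equals $syt+azb=syt+w$. Conceptually, the single nontrivial ingredient is the assumption that $I$ be decomposable, which is exactly what allows us to replace the uncontrolled kernel term $w\in M_\infty(I)$ by a $\precsim_1$-dominating element $z$ that still lies in $M_\infty(I)$; without decomposability one could not guarantee that the dominating summand stays inside the ideal.
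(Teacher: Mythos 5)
Your proposal is correct and follows the same route as the paper's proof: the paper likewise writes $x=ayb+z'$ with $z'\in M_\infty(I)$ (the lift of the witnesses being implicit), invokes decomposability to dominate the kernel term $z'\precsim_1 z$ with $z\in M_\infty(I)$, and concludes $x\precsim_1 y\oplus z$ via exactly the block-matrix identity from \autoref{par:ideals} that you spell out explicitly. Your write-up merely makes the lifting step and the $2\times 2$ block computation explicit, which the paper leaves to the reader.
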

\begin{proof}
	By assumption, there are $a,b\in M_\infty (R)$ and $z'\in M_\infty(I)$ such that $x=ayb+z'$. Since $I$ is decomposable, there is $z\in M_\infty(I)$ such that $z'\precsim_1 z$. Therefore, we have that $x\precsim_1 y\oplus z$, as desired.
\end{proof}


\begin{theorem}
	\label{thm:quo}
	Let $R$ be any ring, and let $I$ be a decomposable two-sided ideal of $R$. Then
	\begin{enumerate}[{\rm (i)}]
		\item $\Lambda(R)/\Lambda_R(I)\cong\Lambda(R/I)$.
		\item If, furthermore, $I$ is quasipure, then $\SR (R)/\SR (I)\cong \SR (R/I)$.
	\end{enumerate}
\end{theorem}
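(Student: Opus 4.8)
The plan is to realize both isomorphisms as the maps induced by the amplified quotient map $\pi\colon R\to R/I$. Applying $\pi$ entrywise preserves $\precsim_1$, so it sends $T(R)$ to $T(R/I)$ and $\preS(R)$ to $\preS(R/I)$ componentwise (for the latter, $\pi(x_n)=\pi(y_{n+1})\pi(x_{n+1})\pi(x_n)$), and it preserves $\precsim$ and $\sim$ between sequences; hence it descends to a $\Q$-morphism $\bar\pi\colon\Lambda(R)\to\Lambda(R/I)$ restricting to $\SR(R)\to\SR(R/I)$. Since $\bar\pi$ kills $\Lambda_R(I)$ and, by \autoref{Ideal_WR}, $\SR(I)=\SR(R)\cap\Lambda_R(I)$, it factors through the quotients, giving $\Phi\colon\Lambda(R)/\Lambda_R(I)\to\Lambda(R/I)$ and $\Psi\colon\SR(R)/\SR(I)\to\SR(R/I)$. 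The whole proof then reduces to showing these two factored maps are isomorphisms, and the engine throughout is \autoref{lma:auxi}, which uses decomposability to absorb an error term of $M_\infty(I)$ into a direct summand.

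For (i), I would get surjectivity of $\Phi$ by lifting a sequence term by term: given $(y_n)\in T(R/I)$, take $x_1$ any lift of $y_1$ and, having built $x_n$ with $\pi(x_n)\sim_1 y_n$, choose a lift $z_{n+1}$ of $y_{n+1}$; since $\pi(x_n)\precsim_1\pi(z_{n+1})$, \autoref{lma:auxi} produces $w_n\in M_\infty(I)$ with $x_n\precsim_1 z_{n+1}\oplus w_n=:x_{n+1}$, where $\pi(x_{n+1})\sim_1 y_{n+1}$. Then $(x_n)\in T(R)$ and $\bar\pi[(x_n)]=[(y_n)]$. For the order (hence injectivity) $\Phi$ is order-preserving as a morphism; conversely, if $\bar\pi[(x_n)]\le\bar\pi[(y_n)]$ then for each $n$ there are $m$ and, by \autoref{lma:auxi}, some $w_n\in M_\infty(I)$ with $x_n\precsim_1 y_m\oplus w_n$. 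Assembling $z_n:=w_1\oplus\cdots\oplus w_n\in M_\infty(I)$ gives $[(z_n)]\in\Lambda_R(I)$ and $x_n\precsim_1 y_M\oplus z_M$ for $M=\max\{m,n\}$, so $[(x_n)]\le_{\Lambda_R(I)}[(y_n)]$. Thus $\Phi$ is an order isomorphism; the same absorption argument shows it reflects $\prec$, and since $\bar\pi$ and the quotient map preserve suprema and $\prec$, $\Phi$ is a $\Q$-isomorphism.

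For (ii), the order-embedding of $\Psi$ is where quasipureness first enters. If $\bar\pi(x)\le\bar\pi(y)$ with $x,y\in\SR(R)$, the computation from (i) already gives $x\le y+z'$ with $z'\in\Lambda_R(I)$; to upgrade $z'$ to an element of $\SR(I)$ I will show $\SR(I)$ is cofinal in $\Lambda_R(I)$. Writing $z'=[(z_n)]$ with $z_n\in M_\infty(I)$, \autoref{QP_Property} gives, for each $n$, a sequence $(u^{(n)}_m)_m\in\preS(I)$ with $z_n\precsim_1 u^{(n)}_1$; setting $\zeta_n:=[(u^{(n)}_m)_m]\in\SR(I)$ and $z:=\sup_n(\zeta_1+\cdots+\zeta_n)$, which lies in $\SR(I)$ since $\SR(I)$ is an ideal, the argument used in \autoref{Bij_SR} yields $z'\le z$. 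Hence $x\le y+z$ with $z\in\SR(I)$, i.e. $x\le_{\SR(I)} y$, so $\Psi$ is an order embedding (and, similarly, reflects $\prec$); injectivity also follows since $\Phi$ is injective.

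The remaining and main obstacle is surjectivity of $\Psi$, namely lifting a sequence $(y_n)\in\preS(R/I)$ to one in $\preS(R)$ with the same image. Part (i) lifts it to some $(z_n)\in T(R)$, but the defining relations $z_n=c_{n+1}z_{n+1}z_n$ will only hold modulo an error $\delta_n\in M_\infty(I)$. My plan is to correct these errors using the local-unit form of quasipureness from \autoref{lem:EqQuasi-Pure} (elements $y,s\in M_\infty(I)$ with $sy=y$) together with \autoref{QP_Property}, absorbing each $\delta_n$ into an extra $\preS(I)$-summand so that the corrected sequence genuinely satisfies the $\preS$-relations while leaving its class modulo $\SR(I)$ unchanged; equivalently, one lifts the countably generated projective $R/I$-module corresponding to $(y_n)$ under $\CP(R/I)\cong\SR(R/I)$ (see \cite[Theorem 4.13]{AntAraBosPerVil23:CuRing} and \cite{Whthead80}), with quasipureness of $I$ supplying the idempotent-type corrections needed for the lifted inductive system to still define a projective $R$-module. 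I expect this lifting step to carry essentially all the difficulty of part (ii).
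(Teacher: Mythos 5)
Your architecture is the paper's: induce $\bar\pi$ from the amplified quotient map, prove surjectivity and order-embedding, with \autoref{lma:auxi} absorbing $M_\infty(I)$-errors in (i) and \autoref{QP_Property} upgrading them to $\SR(I)$-elements in (ii). Part (i) is essentially right, but needs one repair: \autoref{lma:auxi} only produces single elements $w_n\in M_\infty(I)$, and since $\precsim_1$ is not reflexive on a general ring, your assembled $z_n:=w_1\oplus\cdots\oplus w_n$ need not satisfy $z_n\precsim_1 z_{n+1}$, so $[(z_n)]$ is not yet a legitimate element of $\Lambda_R(I)$. You must invoke decomposability a second time to extend each error term to a $\precsim_1$-increasing sequence $z_{n,m}\in M_\infty(I)$ and then take the diagonal sums $w_n'=z_{1,n}\oplus\cdots\oplus z_{n,n}$, which is exactly what the paper does. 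Your order-embedding in (ii) via cofinality of $\SR(I)$ in $\Lambda_R(I)$ is correct and is the same mechanism the paper runs inline (and proves separately in \autoref{thm:LatIsoIdeals}).

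The genuine gap is the surjectivity in (ii), which you leave as a plan while correctly flagging that it carries all the difficulty. The step does not follow from generalities: the relation to be repaired is \emph{additive} ($x_n=y_{n+1}x_{n+1}x_n+z_n$ with $z_n\in M_\infty(I)$), whereas membership in $\preS(R)$ is a \emph{multiplicative} factorization, so ``absorbing each $\delta_n$ into an extra $\preS(I)$-summand'' needs an explicit wiring. The paper's construction: quasipureness (via the proof of \autoref{QP_Property}) factors $z_n=r_ns_{1,n}$ with $(s_{m,n})_m\in\preS(I)$; one then forms augmented matrices $X_n$ containing $x_n$, a diagonal block $S_n=\mathrm{diag}(s_{n,1},s_{n-1,2},\ldots,s_{2,n-1})$ of entries shifted across \emph{both} indices, and a crucial extra row $s_{1,n}$, so that $X_{n+1}X_n$ contains $s_{2,n}s_{1,n}$ and $s_{1,n+1}x_n$ as entries; a suitable $Y_{n+1}$, whose key entry is $r_ny_{2,n}$ (with $y_{m+1,n}s_{m+1,n}s_{m,n}=s_{m,n}$), then reconstructs $z_n=r_ny_{2,n}(s_{2,n}s_{1,n})$ and yields $Y_{n+1}X_{n+1}X_n=X_n$, while all added blocks vanish mod $I$, so $\SR(\pi)[(X_n)]=[(x_n)]$. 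This two-index interlacing is the substance of the proof and is absent from your sketch. Your fallback through $\CP(R/I)\cong\SR(R/I)$ and the Herbera--P\v{r}\'{\i}hoda lifting theorem would close the gap only for unital $R$ (\autoref{lma:CarProj} is stated in that setting), whereas the theorem concerns arbitrary rings; the paper's own remark notes the module-theoretic argument must be \emph{adapted} to the general case, and that adaptation is precisely the construction you have not supplied.
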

\begin{proof}
	Throughout the proof, let us denote by $\pi\colon R\to R/I$ the quotient map. 
	
	(i): The map $\pi$ induces a map $\pi_I:=\Lambda(\pi)\colon\Lambda(R)\to \Lambda(R/I)$ by $\pi_I([(x_n)])=[(\pi(x_n))]$, which in turn we use to define 
	\[
	\bar{\pi}_I\colon \Lambda(R)/\Lambda_R(I)\to\Lambda(R/I) \text{ by }\bar{\pi}_I([(x_n)]_{\Lambda_R(I)})=[(\pi(x_n))],
	\]
	which is easily seen to be a well defined semigroup homomorphism.
	
	Let us prove that $\bar{\pi}_I$ is surjective. Let $[(\pi(x_n))]\in \Lambda(R/I)$. Set $z_1=0$. By \autoref{lma:auxi} applied to $\pi(x_1)\precsim_1\pi(x_2)$, there is $z_2\in M_\infty(I)$ such that $x_1\precsim_1 x_2\oplus z_2$. Another application of \autoref{lma:auxi} to $\pi(x_2\oplus z_2)=\pi(x_2)\precsim_1\pi(x_3)$ yields $z_3\in M_\infty(I)$ with $x_2\oplus z_2\precsim_1 x_3\oplus z_3$. Continuing in this way we find $z_n\in M_\infty(I)$ such that $[(x_n\oplus z_n)] \in \Lambda(R)$. Now $\bar{\pi}_I([(x_n\oplus z_n)])=[(\pi(x_n\oplus z_n))]=[(\pi(x_n))]$.
	
	We now prove that $\bar{\pi}_I$ is an order-embedding. Let $[(x_n)], [(y_n)]\in \Lambda(R)$ be such that $\bar{\pi}_I([(x_n)]_{\Lambda_R(I)})=[(\pi(x_n))]\leq [(\pi(y_n))]= \bar{\pi}_I([(y_n)]_{\Lambda_R(I)})$. After removing certain elements from $(y_n)$ if necessary (withouth changing its class), we may assume that $\pi(x_n)\precsim_1\pi(y_n)$ for each $n$. Using \autoref{lma:auxi} and the fact that $I$ is decomposable, choose $z_{n,m}\in M_\infty(I)$ such that $x_n\precsim_1 y_n\oplus z_{n,1}$ and  $z_{n,m}\precsim_1z_{n,m+1}$ for each $n,m$.
	
	We have $x_1\precsim_1 y_1\oplus z_{1,1}$ and also $x_2\precsim_1 y_2\oplus z_{2,1}\precsim_1 y_3\oplus z_{2,2}\oplus z_{1,2}$, and note that $z_{1,1}\precsim_1 z_{2,2}\oplus z_{1,2}$. Similarly, $x_3\precsim_1 y_3\oplus z_{3,1}\precsim_1 y_4\oplus z_{3,3}\oplus z_{2,3}\oplus z_{1,3}$ with $ z_{2,2}\oplus z_{1,2}\precsim_1z_{3,3}\oplus z_{2,3}\oplus z_{1,3}$. Thus, set $w_1=z_{1,1}$, $w_2=z_{1,2}\oplus z_{2,2}$ and in general $w_n=z_{1,n}\oplus z_{2,n}\oplus\cdots \oplus z_{n,n}$. By construction, $w_n\in M_\infty (I)$ and $w_n\precsim_1 w_{n+1}$. Moreover, for each $n\geq 2$ we have that $x_n\precsim_1 y_{n+1}\oplus w_n$. Therefore $[(x_n)]\leq [(y_n)]+[(w_n)]$ in $\Lambda(R)$, whence $[(x_n)]_{\Lambda_R(I)}\leq [(y_n)]_{\Lambda_R(I)}$.
	
	(ii): Let $\SR (\pi)$ be the induced morphism by the quotient map $\pi\colon R\to R/I$. Given $(x_{n})\in\preS (R/I)$, we know that for each $n$ there exist $y_{n}\in M_\infty(R)$ and $z_{n}\in M_\infty(I)$ such that
	\[
	y_{n+1}x_{n+1}x_{n}+z_{n}=x_{n}.
	\]
	
	Further, since $I$ is quasipure, the proof of \autoref{QP_Property} gives that there exist elements $r_{n}\in M_\infty(R)$ and sequences $(s_{m,n})_{m}\in\preS (I)$ such that $z_{n}=r_{n}s_{1,n}$ for each $n$.
	
	Define the matrices
	\[
	S_{n}= 
	\left(\begin{array}{cccc}
		
		s_{n,1}    &   0           &   \ldots  &     0      \\
		0          &   s_{n-1,2}   &           &     \vdots \\
		\vdots     &               &   \ddots  &     0       \\
		0          &   \ldots      &   0       &  s_{2,n-1}
	\end{array}\right)
	\quad \text{and}
	\quad
	X_{n}= 
	\left(\begin{array}{cc}
		
		x_{n}  &   0        \\
		0      &   S_{n}   \\
		s_{1,n}      &   0      
	\end{array}\right).
	\]

	Note that
	\[
	X_{n+1}X_{n}=
	\left(\begin{array}{ccccc}
		
		x_{n+1}x_{n}  &  0 & 0 & \ldots & 0 \\
		0 & s_{n+1,1}s_{n,1}    &   0           &   \ldots  &     0  \\
		0 & 0          &   s_{n,2}s_{n-1,2}   &           &     \vdots  \\
		\vdots & \vdots     &               &   \ddots  &     \vdots        \\
		0 & 0          &   \ldots      &   0       &  s_{3,n-1}s_{2,n-1} \\
		s_{2,n}s_{1,n} & 0 & \ldots & 0 & 0 \\
		s_{1,n+1}x_{n} & 0 & \ldots & 0 & 0 
	\end{array}\right).
	\]
	
	Thus, given $y_{m,n}$ such that $y_{m+1,n}s_{m+1,n}s_{m,n}=s_{m,n}$, the matrix
	\[
	Y_{n+1}=
	\left(\begin{array}{ccccccc}
		
		y_{n+1}  &  0 & 0 & \ldots & 0 & r_{n}y_{2,n} & 0\\
		0 & y_{n+1,1}    &   0           &   \ldots  &     0 & 0 & 0\\
		0 & 0          &   y_{n,2}   &           &     \vdots & \vdots & \vdots\\
		\vdots & \vdots     &               &   \ddots  &     0       & 0 & 0\\
		0 & 0          &   \ldots      &   0       &  y_{3,n-1} & 0 & 0\\
		0 & 0 & \ldots & 0 & 0 & y_{2,n} & 0
	\end{array}\right)
	\]
	satisfies
	\[
	Y_{n+1}X_{n+1}X_{n}=X_{n}
	\]
	for every $n$.
	
	We have $(X_{n})\in\preS (R)$ and $\SR (\pi)[(X_{n})]=[(x_{n})]$ as required.
	
	Let us now prove that $\SR (\pi )$ is an order-embedding, that is, $\SR (\pi )([(x_{n})])\leq \SR (\pi )([(y_{m})])$ if and only if $[(x_{n})]_{\SR(I)}\leq [(y_{m})]_{\SR (I)}$. Since $\SR ( \pi)$ is a morphism, we only need to show that $\SR (\pi )([(x_{n})])\leq \SR (\pi )([(y_{m})])$ implies $[(x_{n})]_{\SR (I)}\leq [(y_{m})]_{\SR (I)}$.
	
	Take $(x_{n}),(y_{m})\in \preS (R)$ such that $[(\pi(x_{n}))]\leq [(\pi(y_{m}))]$. Upon possibly removing certain elements from $(\pi(y_{m}))$, we may assume that there exist $r_{n},s_{n}\in M_\infty(R)$ and $z_{n}\in M_\infty(I)$ such that
	\[
	x_{n} -r_{n}y_{n}s_{n}=z_{n}
	\]
	for every $n$.
	
	Since $I$ is quasipure, \autoref{QP_Property} implies that $z_{n}\precsim_{1} s_{1,n}$, with $(s_{m,n})_{m}\in\preS (I)$ for every $n$. Thus, we get
	\[
	x_{n}=r_ny_ns_n+z_n\precsim_{1} y_{n}\oplus s_{1,n}\precsim_{1} y_{n+1} \oplus s_{2,n},
	\]
	and thus $x_n\precsim_1 y_{n+1}\oplus (\oplus_{r\leq n} s_{2,r})$.
	This implies
	\[
	[(x_{n})]\leq [(y_{n})] + \sup_{n}( [(s_{m,1})]+\ldots +[(s_{m,n})])
	\]
	and, consequently, one gets $[(x_{n})]_{\SR (I)}\leq [(y_{n})]_{\SR (I)}$.
\end{proof}
\begin{remark}
	Let $R$ be a ring, $I$ a two-sided ideal, and denote by $\pi\colon R\to R/I$ the quotient map.  Under the isomorphism $\SR(R)\cong\CP(R)$ established in \cite[Theorem 4.13]{AntAraBosPerVil23:CuRing}, the map $\SR(\pi)\colon \SR(R)\to \SR(R/I)$ may be identified with the map $\CP(R)\to \CP(R/I)$ given by $[P]\mapsto [P/PI]$. 
	
	It was proved in \cite[Theorem 3.1]{Herbera2014} that countably generated projective $R$-modules can always be lifted modulo the trace ideal of a projective module. That is, if $I$ is the trace ideal of a countably generated projective right $R$-module and $P'$ is a countably generated projective right $R/I$-module, then there is a countably generated projective $R$-module $P$ such that $P/PI\cong P'$. Therefore, if $I$ is quasipure, \autoref{lma:CarProj} shows that $I$ is the trace of a countably generated projective module, and thus by our considerations above, this implies that $\SR(\pi)$ is surjective.
	
	A detailed inspection of \cite[Theorem 3.1]{Herbera2014} and its proof shows that the arguments can be adapted to show surjectivity of $\SR (\pi)$ in the general setting, which is part of the argument carried out in (ii) of \autoref{thm:quo}. Indeed our construction in the proof of \autoref{thm:quo}(ii) of a sequence $(X_n)$ lifting $(x_n)$ gives an actual lifting of the countably generated projective module represented by $(x_n)$.
\end{remark}
\begin{corollary}
	Let $R$ be any ring, and let $J\subseteq \Lambda (R)$ be an ideal. Then, there exist two-sided ideals $\tilde{I}\subseteq I$ of $R$ with $\tilde{I}$ quasipure such that 
	\[
	\Lambda (R)/J\cong\Lambda (R/I) \quad\text{and}\quad \SR (R)/(J\cap \SR (R))\cong \SR (R/\tilde{I}).
	\]
\end{corollary}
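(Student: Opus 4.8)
The plan is to deduce this corollary purely by assembling the lattice isomorphisms of \autoref{Bij_SR} and \autoref{Bij_WR} with the quotient isomorphisms of \autoref{thm:quo}; the whole argument amounts to translating the abstract semigroup ideal $J$ into honest ring-theoretic ideals and then invoking the quotient theorem twice.

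First I would handle the $\Lambda$-part. Since $J$ is an ideal of $\Lambda(R)$, \autoref{Bij_SR} provides a decomposable two-sided ideal $I:=\mathrm{Idl}(J)$ of $R$ with $\Lambda_R(I)=J$. Applying \autoref{thm:quo}(i) to this decomposable ideal then gives
\[
\Lambda(R)/J=\Lambda(R)/\Lambda_R(I)\cong\Lambda(R/I),
\]
which is the first desired isomorphism.

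Next I would extract the quasipure ideal. The set $J\cap\SR(R)$ is an ideal of $\SR(R)$ (this is precisely the map $\psi$ from \autoref{thm:retract}), and by \autoref{Ideal_WR} it equals $\Lambda_R(I)\cap\SR(R)=\SR(I)$. I then set $\tilde{I}:=\Tr_R(J\cap\SR(R))$. By \autoref{Bij_WR} this ideal is quasipure and satisfies $\SR(\tilde{I})=J\cap\SR(R)$, while \autoref{QP_Kernel} identifies $\tilde{I}=\Tr_R(\SR(I))$ as the largest trace ideal of $R$ contained in $I$, yielding the required inclusion $\tilde{I}\subseteq I$. Finally, applying \autoref{thm:quo}(ii) to the quasipure (hence decomposable) ideal $\tilde{I}$ gives
\[
\SR(R)/(J\cap\SR(R))=\SR(R)/\SR(\tilde{I})\cong\SR(R/\tilde{I}),
\]
completing the proof.

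The argument is essentially bookkeeping, so I do not expect a genuine obstacle; the only points that require care are the two identifications $J\cap\SR(R)=\SR(I)$ and $\SR(\tilde{I})=J\cap\SR(R)$, together with the inclusion $\tilde{I}\subseteq I$. All three become immediate once the correct reference is cited (respectively \autoref{Ideal_WR}, the bijection of \autoref{Bij_WR}, and \autoref{QP_Kernel}), after which the quotient isomorphisms of \autoref{thm:quo} apply verbatim.
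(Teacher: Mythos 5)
Your proposal is correct and follows essentially the same route as the paper: obtain the decomposable ideal $I$ with $\Lambda_R(I)=J$ from \autoref{Bij_SR}, obtain the quasipure ideal $\tilde{I}$ with $\SR(\tilde{I})=\Lambda_R(I)\cap\SR(R)=J\cap\SR(R)$ from \autoref{Bij_WR}, and conclude with both parts of \autoref{thm:quo}. The only cosmetic difference is that the paper derives the inclusion $\tilde{I}\subseteq I$ from \autoref{QP_Property}, whereas you cite \autoref{QP_Kernel}, which packages the same fact (that $\tilde{I}=\Tr_R(\SR(I))$ is the largest trace ideal contained in $I$), so the two justifications are interchangeable.
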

\begin{proof}
	Given an ideal $J\subseteq \Lambda (R)$, we know from \autoref{Bij_SR} that $J=\Lambda_R (I)$ for some two-sided decomposable ideal $I\subseteq R$. Consider the ideal $\Lambda_R(I)\cap\SR (R)$ of $\SR (R)$. By \autoref{Bij_WR}, there exists a quasipure ideal $\tilde{I}$ such that $\SR (\tilde{I})= \Lambda_R(I)\cap\SR (R)$. In particular, it follows from \autoref{QP_Property} that $\tilde{I}\subseteq I$. The isomorphisms now follow from \autoref{thm:quo}.
\end{proof}
\section{Ideals and quotients in the category $\SQ$}
\label{sec:scu}
In this section we introduce  the category $\SQ$ and study its relationship with the category $\SCu$, already considered in \cite[Section 5]{AntAraBosPerVil23:CuRing}. Whilst $\SCu$ naturally defines a functor with domain the class of weakly $s$-unital rings, with $\SQ$ we remove this assumption and determine a functor whose domain is the class of arbitrary rings.

We also introduce the notions of ideal and quotients in these categories. To this end we will make use of Theorems \ref{Bij_SR} and \ref{Bij_WR} to show that the lattice of two-sided decomposable (respectively, quasipure) ideals of $R$ is encoded in the lattice of ideals of $\SQ(R)$; see  \autoref{thm:LatIsoIdeals}. 
\begin{parag}[Weakly increasing sequences]
	\label{par:wincr}
	Let $S$ be a $\Q$-semigroup. We say that a sequence $(x_n)$ in $S$ is \emph{weakly increasing} if there exists an increasing sequence $(y_m)$ in $S$ such that
	\begin{enumerate}[(i)]
		\item For every $m$ there exists $n(m)$ such that $y_m\leq x_n$ for every $n\geq n(m)$.
		\item $x_n\leq \sup_m y_m$ for every $n$.
	\end{enumerate}
	Of course, increasing sequences are examples of weakly increasing sequences. In particular, constant sequences are examples of weakly increasing sequences. Observe that this definition does not use anything else other than the existence of suprema of increasing sequences (axiom (O1)) in the given semigroup. Note that necessarily, for a weakly increasing sequence $(x_n)$ as above, we have $\sup_n x_n = \sup_m y_m$, hence suprema of weakly increasing sequences always exist. 
	
	The set of weakly increasing sequences forms a monoid under componentwise addition, with suprema being compatible with addition. If $\varphi\colon S\to T$ is a $\Q$-morphism and $(x_n)$ is a weakly increasing sequence, then $(\varphi(x_n))$ is also weakly increasing with $\varphi(\sup_n x_n)=\sup_n\varphi(x_n)$. Indeed, let  	
	$(x_n),(z_n)$ be weakly increasing sequences, and let $x,z$ be their respective suprema, which we have just noticed exist. It is easy to verify that, by definition, $(x_n+z_n)$ is also a weakly increasing sequence with $x+z=\sup_n (x_n+z_n)$.
	
	In the context of $\Cu$-semigroups, weakly increasing sequences were introduced in \cite[Paragraph 5.1]{AntAraBosPerVil23:CuRing}: We say that a sequence $(x_n)$ in a $\Cu$-semigroup $S$ is weakly increasing if, whenever $x\ll x_n$ for some $n$ and $x$, there exists $m_0$ such that $x\ll x_m$ for every $m\geq m_0$. We prove below that these two notions agree.
\end{parag}
The proof of the following lemma is implicit in \cite[Lemma~5.2]{AntAraBosPerVil23:CuRing}. We offer a few details. For a $\Cu$-semigroup, let us temporarily refer to a weakly increasing sequence as just defined above as a $\Cu$-weakly increasing sequence.
\begin{lemma}\label{prp:Lma54}
	Let $S$ be a $\Cu$-semigroup. A sequence $(x_n)$ is $\Cu$-weakly increasing in $S$ if, and only if, $(x_n)$ is weakly increasing viewing $S$ as a $\Q$-semigroup. 
\end{lemma}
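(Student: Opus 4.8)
The plan is to prove the two implications separately, writing $x_n=\sup_k x_n^{(k)}$ for a rapidly increasing sequence (as guaranteed by (O2)) whenever convenient, and freely using that $\ll$ interpolates in a $\Cu$-semigroup: if $x\ll y$ then $x\ll z\ll y$ for some $z$, which follows by expressing $y$ as such a supremum and noting that $x\le x_k^{(k)}\ll\cdots$ for a suitable index.

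For the implication that a sequence which is weakly increasing in the $\Q$-sense is $\Cu$-weakly increasing, suppose $(y_m)$ is an increasing witness satisfying (i) and (ii), and assume $x\ll x_n$. First I would interpolate to get $x\ll x'\ll x_n$. Condition (ii) gives $x_n\le \sup_m y_m$, hence $x'\ll \sup_m y_m$, and since $(y_m)$ is increasing the definition of $\ll$ yields some $m$ with $x'\le y_m$. Condition (i) then provides $n(m)$ with $y_m\le x_j$ for all $j\ge n(m)$, and chaining $x\ll x'\le y_m\le x_j$ gives $x\ll x_j$ for every $j\ge n(m)$, which is precisely the $\Cu$-weakly increasing condition.

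The harder direction is to produce a witness $(y_m)$ from the $\Cu$-weakly increasing hypothesis. Here I would set $U=\{y\in S\mid y\ll x_n\text{ for some }n\}$ and observe two things: $U$ is downward hereditary, and, crucially, the hypothesis forces $U$ to be \emph{upward directed}. Indeed, given $b_1\ll x_{n_1}$ and $b_2\ll x_{n_2}$ in $U$, the $\Cu$-weakly increasing condition produces a single index $J$ with $b_1\ll x_J$ and $b_2\ll x_J$; writing $x_J=\sup_k x_J^{(k)}$ then places both $b_1$ and $b_2$ below a common $x_J^{(k)}\in U$. I expect this directedness step to be the \textbf{main obstacle}, since it is the only place where the hypothesis is used essentially, and it must compensate for the absence of binary suprema in a general $\Cu$-semigroup (one cannot simply join elements, nor use sums, which would shift the target from $x_n$ to $2x_n$).

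Once directedness is in hand, the construction is routine. Every $y\in U$ lies below some $x_n^{(k)}$, because $y\ll x_n=\sup_k x_n^{(k)}$; hence the countable set $B=\{x_n^{(k)}\mid n,k\}\subseteq U$ is cofinal in $U$. Enumerating $B=\{b_1,b_2,\dots\}$ and applying directedness repeatedly, I would build an increasing sequence $(y_m)$ in $U$ with $y_m\ge b_m$ for all $m$. Then (i) holds because each $y_m\in U$ satisfies $y_m\le x_j$ for all large $j$, while (ii) holds because every $x_n^{(k)}$ lies below some $y_m$, so $x_n=\sup_k x_n^{(k)}\le \sup_m y_m$. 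This exhibits $(x_n)$ as weakly increasing in the $\Q$-sense and completes the equivalence.
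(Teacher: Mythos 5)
Your proof is correct, but it takes a genuinely different route from the paper's in the harder implication. For the direction from the $\Q$-definition to the $\Cu$-definition the two arguments are equivalent up to where one interpolates: the paper first upgrades the witness $(y_k)$ to a $\ll$-increasing one satisfying $y_k\ll x_n$ for all $n\geq n_k$ (its conditions (i)$'$ and (ii)$'$), whereas you keep the given witness and interpolate the test element, $x\ll x'\ll x_n$; both devices serve the same purpose of converting the final $\leq$ into $\ll$. The real divergence is in the converse. The paper invokes the diagonal re-indexing argument of \cite[Lemma~5.2]{AntAraBosPerVil23:CuRing}: writing each $x_n=\sup_m x_n^{(m)}$, it extracts indices $m_k$ with $x_k^{(m_k)}\ll x_{k+1}^{(m_k)}\ll x_{k+1}^{(m_{k+1})}$ and sets $y_k=x_k^{(m_k)}$, obtaining a $\ll$-increasing witness directly. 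You instead consider the downward hereditary set $U=\{y\in S\mid y\ll x_n\text{ for some }n\}$, prove it upward directed --- and you are right that this is precisely where the $\Cu$-weakly increasing hypothesis does its essential work, supplying a single index $J$ with $b_1,b_2\ll x_J$ and hence a common bound $x_J^{(k)}\in U$ --- and then enumerate the countable cofinal subset $B=\{x_n^{(k)}\}$ to extract an increasing witness; your verifications of (i) and (ii) from cofinality and directedness are sound. As for what each approach buys: yours is self-contained, avoiding the appeal to the external Lemma~5.2, and cleanly isolates the one step where the hypothesis enters; the paper's route yields the formally stronger $\ll$-increasing witness with $y_k\ll x_n$ for $n\geq n_k$, and this strengthening is not cosmetic, since conditions (i)$'$ and (ii)$'$ are reused verbatim in the proof of \autoref{thm:DirLimEx}. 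Note, however, that your construction delivers the strengthening almost for free: your $y_m$ lie in $U$, so your condition (i) already holds with $\ll$ in place of $\leq$, and replacing the common upper bound $x_J^{(k)}$ by $x_J^{(k+1)}$ at each recursive step would make $(y_m)$ itself $\ll$-increasing.
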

\begin{proof}
	We first note that standard arguments in the theory of abstract Cuntz semigroups allow us to replace the order relation $\leq$ by the relation of compact containment $\ll$ in the definition of a weakly increasing sequence, as follows. A sequence $(x_n)$ in $S$ is weakly increasing if, and only if, there exists a  $\ll$-increasing sequence $(y_k)$ in $S$ such that 
	\begin{enumerate}[{\rm(i)'}]
		\item For every $k$ there exists $n_k$ such that $y_k\ll x_n$ whenever $n\geq n_k$.
		\item $x_n\leq \sup_k y_k$ for every $n$.
	\end{enumerate}	  
	Now, the argument in the proof of \cite[Lemma~5.2]{AntAraBosPerVil23:CuRing} shows that, if $(x_n)$ is a $\Cu$-weakly increasing sequence, then after expressing each element $x_n$ as the supremum of a $\ll$-increasing sequence $(x_n^{(m)})$ and a re-indexing process, one finds a strictly increasing sequence $(m_k)$ in $\N$ such that $x_k^{(m_k)}\ll x_{k+1}^{(m_k)}\ll x_{k+1}^{(m_{k+1})}$. With this, set $y_k=x_k^{(m_k)}$ and one checks that $\sup_k y_k=\sup_n x_n$. Also, by construction, for each $k$, we have that $y_k\ll x_k$, and thus since $(x_n)$ is $\Cu$-weakly increasing there is $n_k$ such that (i)' holds. That (ii)' holds is clear.
	
	It is also clear, on the other hand, that a sequence $(x_n)$ satisfying conditions (i)' and (ii)' above is necessarily $\Cu$-weakly increasing.
\end{proof}
The category $\SCu$ was introduced in \cite[Section 5]{AntAraBosPerVil23:CuRing} with the purpose of balancing out that $\SR(R)$ might not be a $\Cu$-semigroup for a weakly $s$-unital ring $R$, providing an ambient semigroup that does belong to $\Cu$. We define here a category that may be useful for general rings, requiring instead to work with $\Q$-semigroups.
\begin{parag}[The categories $\SQ$ and $\SCu$]
	\label{par:scu}
	Adapting the notation in \cite[Paragraph 5.4]{AntAraBosPerVil23:CuRing}, we let $\SQ$ be the category whose objects are pairs $(S ,W)$ with $S$ a $\Q$-semigroup, and $W$ a submonoid of $S$ closed under suprema of weakly increasing sequences (as defined in \autoref{par:wincr}). This means that, if $(x_n)$ is a sequence in $W$ that is weakly increasing in $S$, then $\sup x_n\in W$. An  \emph{$\SQ$-morphism}  between $(S_1,W_1), (S_2,W_2)\in \SQ$ is a $\Q$-morphism $f\colon S_1\to S_2$ such that $f(W_1)\subseteq W_2$. For brevity, we shall denote an $\SQ$-morphism by $f\colon (S_1,W_1)\to (S_2,W_2)$.
	
	The category $\SCu$ is the full subcategory of $\SQ$ consisting of pairs $(S,W)$, where $S$ is a $\Cu$-semigroup, the auxiliary relation $\prec$ coincides with the way-below relation $\ll$ on $S$, and $W$ is a submonoid of $S$ as above, that is, closed under suprema of weakly increasing sequences.
\end{parag}
\begin{theorem}\label{thm:FunctorSQ}
	Let $R$ be any ring. Then:
	\begin{enumerate}[{\rm (i)}]
		\item The pair $\SQ(R)=(\Lambda(R),\SR(R))$ is an object of $\SQ$.
		\item The assignment $R\mapsto \SQ(R)$ defines a functor $\SQ\colon\mathrm{Rings}\to\SQ$.
	\end{enumerate} 
\end{theorem}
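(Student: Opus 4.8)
The plan is to treat (i) and (ii) in turn, noting that the only genuinely delicate point is the closure of $\SR(R)$ under suprema of weakly increasing sequences; everything else is formal once one records that ring homomorphisms are compatible with the relation $\precsim_1$, the diagonal sum $\oplus$, and the ``projectivity'' relation defining $\preS(R)$.

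For (i), that $\Lambda(R)$ is a $\Q$-semigroup is \autoref{rmk:prec}. That $\SR(R)$ is a submonoid of $\Lambda(R)$ is immediate: $[(0)]\in\SR(R)$, and if $(x_n),(y_n)\in\preS(R)$ with witnesses $y_{n+1}x_{n+1}x_n=x_n$ and $z_{n+1}y_{n+1}y_n=y_n$, then $(x_n\oplus y_n)\in\preS(R)$ with witness $y_{n+1}\oplus z_{n+1}$, so $\SR(R)$ is closed under the addition of $\Lambda(R)$. It remains to show that if $(a_n)$ is a sequence in $\SR(R)$ that is weakly increasing in $\Lambda(R)$, then $\sup_n a_n\in\SR(R)$. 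Here I would first record, using \autoref{par:wincr}, that $\sup_n a_n=\sup_m b_m$ for any witnessing increasing sequence $(b_m)$, and that after replacing $(a_n)$ by the cofinal subsequence $(a_{n(m)})_m$, which has the same supremum, one may assume $b_m\le a_m$ for every $m$. The substance is then to produce an explicit representative in $\preS(R)$ of this supremum.

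The main obstacle is precisely this construction. Fixing $\preS(R)$-representatives $(x^{(n)}_k)_k$ of each $a_n$ and using the comparison data $b_m\le a_m\le\sup_k b_k$ to control how the matrices $x^{(n)}_k$ dominate one another, I would run the diagonalization used to compute suprema of increasing sequences in $\SR(R)$ (\cite[Lemma 4.3]{AntAraBosPerVil23:CuRing}) to extract a single sequence $(z_k)\in T(R)$ with $[(z_k)]=\sup_n a_n$. The crux is to arrange the diagonal so that it inherits the defining relation $z_k=w_{k+1}z_{k+1}z_k$ of $\preS(R)$; this is where the projectivity witnesses of the individual $a_n$ must be spliced together compatibly, and is the step that genuinely uses that each $a_n$ lies in $\SR(R)$ rather than merely in $\Lambda(R)$. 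This parallels the argument already carried out for weakly $s$-unital rings in \cite[Section 5]{AntAraBosPerVil23:CuRing}, which I would adapt to the general $\Q$-semigroup setting.

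For (ii), let $f\colon R\to R'$ be a ring homomorphism. Applying $f$ entrywise gives $M_\infty(f)\colon M_\infty(R)\to M_\infty(R')$, which preserves $\precsim_1$ (from $x=ryt$ one gets $f(x)=f(r)f(y)f(t)$), hence sends $T(R)\to T(R')$ and, since $x_n=y_{n+1}x_{n+1}x_n$ gives $f(x_n)=f(y_{n+1})f(x_{n+1})f(x_n)$, also $\preS(R)\to\preS(R')$. As $f$ preserves $\precsim$ on sequences, it respects $\sim$ and descends to $\Lambda(f)\colon\Lambda(R)\to\Lambda(R')$ restricting to $\SR(f)\colon\SR(R)\to\SR(R')$. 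I would then check that $\Lambda(f)$ is a $\Q$-morphism: additivity and preservation of $0$, of the order, and of the auxiliary relation $\prec$ all follow directly from compatibility with $\oplus$ and $\precsim_1$; preservation of suprema of increasing sequences uses that $\Lambda(f)$ is order preserving (so that $\sup_k\Lambda(f)(a^{(k)})\le\Lambda(f)(\sup_k a^{(k)})$ is automatic) together with the explicit diagonal representative of $\sup_k a^{(k)}$, to which $f$ is applied entrywise for the reverse inequality. Since $\Lambda(f)(\SR(R))\subseteq\SR(R')$, the pair $\SQ(f):=(\Lambda(f),\SR(f))$ is an $\SQ$-morphism, and functoriality ($\SQ(\mathrm{id})=\mathrm{id}$ and $\SQ(g\circ f)=\SQ(g)\circ\SQ(f)$) is clear because the induced maps are computed entrywise and compose accordingly.
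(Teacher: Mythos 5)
Your submonoid check in (i) (with witness $y_{n+1}\oplus z_{n+1}$) and all of part (ii) are correct and essentially identical to the paper's argument. The genuine gap is exactly where you flag ``the main obstacle'': you never carry out the construction of a $\preS(R)$-representative of $\sup_n a_n$, and the plan you sketch --- run the diagonalization used for increasing sequences in $\SR(R)$ and ``splice the projectivity witnesses compatibly'' --- does not go through as stated. The difficulty is that the witnesses $y^{(n)}_{k+1}$, with $y^{(n)}_{k+1}x^{(n)}_{k+1}x^{(n)}_k=x^{(n)}_k$, only relate consecutive terms \emph{within one representative} $(x^{(n)}_k)_k$; for a diagonal sequence $(x^{(n_m)}_{l_m})_m$ drawn from different $n$'s there is no identity of the form $x^{(n_m)}_{l_m}=w\,x^{(n_{m+1})}_{l_{m+1}}\,x^{(n_m)}_{l_m}$ available, so the diagonal lies in $T(R)$ but not, in general, in $\preS(R)$, and no rearrangement of the given witnesses alone produces the required relation across distinct indices $n$.

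The paper supplies the missing device. After interleaving with the witness sequence $([z_m])$ so that \emph{two consecutive} terms of the same representative are trapped between diagonal entries,
\[
z^{(p_{m-1})}_{p_{m-1}}\precsim_1 x^{(n_m)}_{l_m}\precsim_1 x^{(n_m)}_{l_m+1}\precsim_1 z^{(p_m)}_{p_m}\precsim_1 x^{(n_{m+1})}_{l_{m+1}},
\]
one writes $x^{(n_m)}_{l_m+1}=c_m\,x^{(n_{m+1})}_{l_{m+1}}\,d_m$ and replaces each diagonal term by the right-multiplied element $x^{(n_m)}_{l_m}d_{m-1}$. The within-sequence witness then yields
\[
x^{(n_m)}_{l_m}d_{m-1}=\bigl(y^{(n_m)}_{l_m+1}c_m\bigr)\bigl(x^{(n_{m+1})}_{l_{m+1}}d_m\bigr)\bigl(x^{(n_m)}_{l_m}d_{m-1}\bigr),
\]
so the modified sequence is in $\preS(R)$, and the comparisons $x^{(n_m)}_{l_m}d_{m-1}\precsim_1 x^{(n_{m+1})}_{l_{m+1}}\precsim_1 x^{(n_{m+2})}_{l_{m+2}}d_{m+1}$ show it is equivalent to the diagonal, hence represents $\sup_n a_n$. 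Note also that citing \cite[Lemma~4.3]{AntAraBosPerVil23:CuRing} is not directly applicable: that lemma computes suprema of \emph{increasing} sequences in $\SR(R)$, whereas here the sequence is only weakly increasing in $\Lambda(R)$, which is precisely why the interleaving with $([z_m])$ from \autoref{par:wincr} must precede any diagonalization. Without this right-multiplication trick (or an equivalent mechanism), your proof of closure under suprema of weakly increasing sequences --- the one non-formal step of the theorem --- remains an assertion rather than a proof.
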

\begin{proof}
	(i): Part of the argument is inspired by the argument carried out in \cite[Proposition 5.6 (i)]{AntAraBosPerVil23:CuRing}. We include full details for convenience.
	
	We need to verify that $\SR(R)$ is a submonoid of $\Lambda(R)$ closed under suprema of weakly increasing sequences. To do so, let $([x_n])$ be a weakly increasing sequence in $\Lambda(R)$ such that $[x_n]\in\SR(R)$ for each $n$. Write $x_n=(x^{(n)}_k)_k$, and we know there are elements $y^{(n)}_k$ such that $y_{k+1}^{(n)}x_{k+1}^{(n)}x_k^{(n)}=x_k^{(n)}$ for each $k$ and $n$.
	
	There is by definition a sequence $([z_m])$ in $\Lambda(R)$ satisfying conditions (i) and (ii) in \autoref{par:wincr}. Write $z_m=(z^{(m)}_k)_k$. Using the description of suprema in $\Lambda(R)$ (see the proof of \cite[Proposition 2.13]{AntAraBosPerVil23:CuRing}), and after a reindexing process, we may assume that $\sup_n[x_n]=\sup_m [z_m]=[(z_m^{(m)})]$.
	
	Since $([x_n])$ is by assumption weakly increasing, for $m=1$, there is $n_1$ such that $[z_1]\leq [x_n]$ whenever $n\geq n_1$. Therefore there is $l_{1}$ such that $z_1^{(1)}\precsim_1 x_{l_{1}}^{(n_1)}$. We also have that $[x_n]\leq [(z_m^{(m)})]$ for each $n$. Therefore, for each $k$, $n$, there is $p_{n,k}$ such that $x_k^{(n)}\precsim_1 z_{p_{n,k}}^{(p_{n,k})}$. Therefore we have $p_{1} := p_{n_1,l_{1}+1}$ for which $x_{l_{1}+1}^{(n_1)}\precsim_1 z_{p_{1}}^{(p_{1})}$.

	Now, arguing as above, we find $n_2>n_1$ and $l_{2}>l_{1}$ such that $ z_{p_{1}}^{(p_{1})}\precsim_1 x_{l_{2}}^{(n_2)}$. Thus in particular we obtain
	\[
	z_1^{(1)}\precsim_1 x_{l_{1}}^{(n_1)}\precsim_1 x_{l_{1}+1}^{(n_1)} \precsim_1 z_{p_{1}}^{(p_{1})}\precsim_1 x_{l_{2}}^{(n_2)}.
	\]
	Continuing in this way we find increasing sequences  $n_m$, $l_m$, $p_m$ of positive integers such that the corresponding sequences $(x_{l_{m}}^{(n_m)})$ and $(z_{p_{m}}^{(p_{m})})$ satisfy
	\[
	z_{p_{{m-1}}}^{(p_{{m-1}})}\precsim_1 x_{l_{m}}^{(n_m)}\precsim_1x_{l_{m}+1}^{(n_m)} \precsim_1 z_{p_{m}}^{(p_{m})}\precsim_1 x_{l_{{m+1}}}^{(n_{m+1})}.
	\]
	In particular, it follows that $\sup[x_n]=\sup[x_{l_{m}}^{(n_m)}]$. 
	
	Write $x_{l_{m}+1}^{(n_m)}=c_{m} x_{l_{{m+1}}}^{(n_{m+1})}d_{m}$ for some $c_{m},d_{m}$. Now we have 
	\[
	x_{l_{m}}^{(n_m)}=y_{l_{m}+1}^{(n_m)}x_{l_{m}+1}^{(n_m)}x_{l_{m}}^{(n_m)}=y_{l_{m}+1}^{(n_m)}c_{m} x_{l_{{m+1}}}^{(n_{m+1})}d_{m}x_{l_{m}}^{(n_m)},
	\]
	and therefore
	\[
	x_{l_{m}}^{(n_m)}d_{{m-1}}=(y_{l_{m}+1}^{(n_m)}c_{m}) (x_{l_{{m+1}}}^{(n_{m+1})}d_{m})(x_{l_{m}}^{(n_m)}d_{{m-1}}).
	\]
	This implies that the sequence $[(x_{l_{m}}^{(n_m)}d_{{m-1}})]$ belongs to $\SR(R)$. From the above observations we also see that $x_{l_{m}}^{(n_m)}d_{{m-1}}\precsim_1 x_{l_{{m+1}}}^{(n_{m+1})}\precsim_1 x_{l_{{m+2}}}^{(n_{m+2})}d_{{m+1}}$. Therefore, $\sup[x_n]\in \SR(R)$, as was to be shown.
	
	(ii): If $f\colon R\to R'$ is a ring homomorphism, then $f$ extends to a homomorphism $f\colon M_\infty(R)\to M_\infty(R')$ in a way compatible with $\precsim_1$ and $\oplus$. Thus, if $[(x_n)]$ belongs to $\Lambda(R)$ or $\SR(R)$, respectively, we have that $[(f(x_n))]$ belongs to $\Lambda(R')$ or $\SR(R')$. Thus the map $\Lambda(f)\colon \Lambda(R)\to \Lambda(R')$ given by $[(x_n)]\mapsto [(f(x_n))]$ is well defined and maps $\SR(R)$ to $\SR(R')$. Also, if $[(x_n)]\prec [(y_m)]$, there is by definition $m$ such that $x_n\precsim_1 y_m$ for all $n$, and thus $f(x_n)\precsim_1 f(y_m)$ for all $n$. This implies that $[(f(x_n))]\prec [(f(y_m))] $. 
	
	Finally, let $[x_n]$ be an increasing sequence in $\Lambda(R)$. Inspection of the proof of \cite[Proposition 2.13]{AntAraBosPerVil23:CuRing} on how the supremum of $[x_n] $ is constructed shows that $\sup[f(x_n)]=\Lambda(f)(\sup[x_n])$.
\end{proof}
\begin{parag}[Ideals in $\SQ$]\label{IdealsSQ}
	Given an object $(S,W)$ in $\SQ$, an \emph{ideal} of $(S,W)$ will be by definition a pair of the form $(I,I\cap W)$, where $I$ is an ideal of $S$ as a $\Q$-semigroup; see \autoref{par:idealssemigroups}. Analogously, one defines the concept of ideal for an object in $\SCu$. We show in \autoref{lma:idealinscu} that any ideal of a pair $(S,W)$ in $\SQ$ (respectively, in $\SCu$) is again an object in $\SQ$ (respectively, in $\SCu$).
	
	The ideals of an object $(S,W)\in\SQ$ form a lattice with the partial order given by inclusion of both components. Indeed, given two ideals $(I,I\cap W)$ and $(J,J\cap W)$,
	their infimum is $(I\cap J,I\cap J\cap W)$, which is clearly an ideal. Further, the supremum of $(I,I\cap W)$ and $(J,J\cap W)$ is $(I\vee J,(I\vee J)\cap W)$, where $I\vee J$ is the supremum of two ideals in $\Q$. Similarly, the ideals of an object $(S,W)\in \SCu$ form a lattice. (See \autoref{par:idealssemigroups}.)
	
	Given a ring $R$, we denote by $\mathrm{Lat}(\SQ(R))$ the lattice of ideals of $\SQ(R)$. Notice that, in case $R$ is weakly $s$-unital, we have that $\SQ(R)=\SCu(R)$ and then $\mathrm{Lat}(\SQ(R))=\mathrm{Lat}(\SCu(R))$.
\end{parag}
\begin{lemma}
	\label{lma:idealinscu}
	Let $(S,W)$ be an object in $\SQ$ and let $(I,I\cap W)$ be an ideal of $(S,W)$. Then, $(I,I\cap W)$ is also an object in $\SQ$. 
\end{lemma}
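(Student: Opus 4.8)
The plan is to check directly the two requirements that make a pair an object of $\SQ$ (see \autoref{par:scu}): first, that $I$, with the order and auxiliary relation inherited from $S$, is a $\Q$-semigroup; and second, that $I\cap W$ is a submonoid of $I$ closed under suprema of weakly increasing sequences computed in $I$.

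For the first requirement, I would begin by noting that $0\in I$: since $I$ is a nonempty downward hereditary subsemigroup (see \autoref{par:idealssemigroups}) and $0\leq x$ for every $x\in I$ by positivity of $S$, downward heredity forces $0\in I$, so $I$ is a positively ordered submonoid. The restriction of the auxiliary relation $\prec$ to $I$ is again an additive auxiliary relation, since every clause in \autoref{par:auxi} is inherited by a subsemigroup containing $0$. For (O1), recall that $I$ is closed under suprema of increasing sequences; the key point, which I would record once and use throughout, is that such a supremum taken in $S$ lands in $I$ and, because the order on $I$ is the restriction of the one on $S$, it is simultaneously the supremum computed in $I$. Axiom (O4) then follows immediately from (O4) in $S$, as all the suprema involved coincide with those taken in $S$.

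For the second requirement, $I\cap W$ is a submonoid of $I$ as the intersection of two submonoids of $S$. The crux of the argument is the observation that a sequence $(w_n)$ in $I\cap W$ that is weakly increasing in $I$ is automatically weakly increasing in $S$: the witnessing increasing sequence $(y_m)$ from \autoref{par:wincr} lies in $I$, hence in $S$, and conditions (i)--(ii) there involve only the order and a supremum, all of which agree in $I$ and in $S$ by the remark above. Since $(S,W)$ is an object of $\SQ$ and $(w_n)$ is a weakly increasing sequence in $W\subseteq S$, its supremum lies in $W$. On the other hand, \autoref{par:wincr} gives $\sup_n w_n=\sup_m y_m$, and the latter lies in $I$ because $(y_m)$ is increasing in $I$ and $I$ is closed under suprema of increasing sequences. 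Hence $\sup_n w_n\in I\cap W$, as needed.

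I expect the only genuinely delicate point to be the compatibility between $I$ and $S$ of both suprema of increasing sequences and the weakly increasing condition; once it is established that the inclusion $I\hookrightarrow S$ preserves and reflects these, the remaining checks are routine verifications of inheritance of the $\Q$-semigroup axioms.
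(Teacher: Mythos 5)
Your proof is correct and takes essentially the same route as the paper: the central step---observing that a weakly increasing sequence in $I$, together with its witnessing increasing sequence $(y_m)$, is automatically weakly increasing in $S$, so that $(S,W)\in\SQ$ places the supremum in $W$ while closure of $I$ under suprema of increasing sequences places it in $I$---is exactly the paper's argument. The only difference is that you spell out the routine verification that $I$ inherits the $\Q$-semigroup structure (including $0\in I$ and the coincidence of suprema in $I$ and $S$), which the paper simply asserts as immediate from the definition of ideal.
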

\begin{proof}
	Note that, by definition, $(I,I\cap W)$ is a pair such that $I$ is an ideal of $S$ and thus in particular $I\in \Q$. Also, $I\cap W\subseteq I$ is a submonoid closed under suprema of increasing sequences. Thus, we only need to check that $I\cap W\subseteq I$ is closed under suprema of weakly increasing sequences.
	
	Let $(x_n)$ be a weakly increasing sequence in $I$ with elements in $I\cap W$. Since $I\in \Q$, we have that the supremum of $(x_n)$ belongs to $I$. We have to check that it also belongs to $W$, and to do so we observe that $(x_n)$ is also weakly increasing as a sequence in $S$ and apply that $(S,W)\in \SQ$. Indeed, since $(x_n)$ is weakly increasing in $I$, there exists an increasing sequence $(y_m)$ in $I$ such that for every $m$ there is $n_m$ with $y_m\leq x_n$ whenever $n\geq n_m$, and such that $x_n\leq \sup_m y_m$ for every $n$. By considering the same sequence $(y_m)$ in $S$, we see that $\sup_my_m$ also belongs to $S$ and satisfies the conditions for $(x_n)$ to be weakly increasing in $S$.
	
	The case where $(S,W)\in \SCu$ is similar.	
\end{proof}

Recall that a subset $X$ of a partially ordered set $P$ is said to be \emph{cofinal} if for every $p\in P$ there exists $x\in X$ such that $p\leq x$.
\begin{theorem}\label{thm:LatIsoIdeals}
	Let $R$ be any ring. Then, the map
	\[
	\xymatrixrowsep{0pc}
	\xymatrix{
		\mathrm{Lat}_{\mathrm{d}}(R) \ar[r]^-{\psi} & {\rm Lat}(\SQ (R)) \\
		I \ar@{|->}[r] & (\Lambda_R(I),\SR (I))
	}
	\]
	is a lattice isomorphism. Further, $I$ is quasipure if and only if $I$ is decomposable and $\SR (I)$ is cofinal in $\Lambda_R(I)$.
	
	If moreover $R$ is weakly $s$-unital, the same map defines a lattice isomorphism $\mathrm{Lat}(R)\cong \mathrm{Lat}(\SCu(R))$.	
\end{theorem}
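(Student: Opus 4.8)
The plan is to reduce the first assertion to \autoref{Bij_SR}, handle the cofinality characterization by a direct two-sided argument, and then specialize to obtain the weakly $s$-unital statement.

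First I would record that $\psi$ is well defined: for a decomposable ideal $I$, \autoref{lma:ideal} shows $\Lambda_R(I)$ is an ideal of $\Lambda(R)$, and \autoref{Ideal_WR} gives $\SR(I)=\SR(R)\cap\Lambda_R(I)$, so $(\Lambda_R(I),\SR(I))$ is exactly an ideal of $\SQ(R)$ in the sense of \autoref{IdealsSQ}. The crucial formal point is that an ideal of $\SQ(R)$ is determined by its first coordinate: by the lattice structure described in \autoref{IdealsSQ}, the projection $(J,J\cap\SR(R))\mapsto J$ preserves inclusions, infima and suprema, and is a bijection, hence a lattice isomorphism $\mathrm{Lat}(\SQ(R))\cong\mathrm{Lat}(\Lambda(R))$. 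Composing $\psi$ with this isomorphism yields the map $I\mapsto\Lambda_R(I)$, which is a lattice isomorphism by \autoref{Bij_SR}; therefore $\psi$ is one as well.

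For the characterization of quasipureness I would prove both implications directly. If $I$ is quasipure then it is decomposable, and for cofinality I take $[(x_n)]\in\Lambda_R(I)$ and, for each $n$, use \autoref{QP_Property} to obtain $(x_{n,m})_m\in\preS(I)$ with $x_n\precsim_1 x_{n,1}$. The partial sums $t_N:=\sum_{k\le N}[(x_{k,m})_m]$ form an increasing sequence in $\SR(I)$, and since $\SR(R)$ is closed under suprema of (weakly) increasing sequences by \autoref{thm:FunctorSQ} while $\Lambda_R(I)$ is closed under suprema of increasing sequences, the element $s:=\sup_N t_N$ lies in $\SR(R)\cap\Lambda_R(I)=\SR(I)$. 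Now $x_n\precsim_1 x_{n,1}$ together with $[(x_{n,m})_m]\le t_N\le s$ (for $N\ge n$) forces $x_n\precsim_1 s_l$ for some term $s_l$ of a representative of $s$; letting $n$ vary gives $[(x_n)]\le s$, so $\SR(I)$ is cofinal in $\Lambda_R(I)$. Conversely, assume $I$ is decomposable with $\SR(I)$ cofinal; given $x\in M_\infty(I)$, decomposability produces an increasing sequence $x=x_1\precsim_1 x_2\precsim_1\cdots$ in $M_\infty(I)$, so $[(x_n)]\in\Lambda_R(I)$, and cofinality yields $[(w_m)]\in\SR(I)$ with $[(x_n)]\le[(w_m)]$, hence $x\precsim_1 w_{m_0}$ for some $m_0$. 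Writing $w_{m_0}=z\,w_{m_0+1}w_{m_0}$ with the witness $z\in M_\infty(I)$ coming from $(w_m)\in\preS(I)$, and setting $y:=w_{m_0}$ and $s:=z\,w_{m_0+1}$, I get $sy=y$ and $x\precsim_1 y$ with $s,y\in M_\infty(I)$, which is precisely the definition of quasipureness.

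Finally, if $R$ is weakly $s$-unital then every ideal is decomposable (\autoref{par:ideals}), so $\mathrm{Lat}_{\mathrm{d}}(R)=\mathrm{Lat}(R)$, and $\SQ(R)=\SCu(R)$ by \autoref{IdealsSQ}; thus the already established isomorphism becomes $\mathrm{Lat}(R)\cong\mathrm{Lat}(\SCu(R))$. I expect the main obstacle to be the forward (quasipure $\Rightarrow$ cofinal) direction, specifically checking that the supremum $s$ really belongs to $\SR(I)$ and dominates $[(x_n)]$: this hinges on the closure property of \autoref{thm:FunctorSQ} and on carefully tracking $\precsim_1$-domination through the order on $\Lambda(R)$, whereas the first and third assertions are essentially formal consequences of \autoref{Bij_SR}.
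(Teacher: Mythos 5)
Your proposal is correct and follows essentially the same route as the paper: the lattice isomorphism is reduced to \autoref{Bij_SR} via the observation that an ideal of $\SQ(R)$ is determined by its first coordinate, the forward direction of the cofinality criterion uses \autoref{QP_Property} together with the partial sums $t_N$ and the closure of $\SR(R)$ under suprema (with $\SR(R)\cap\Lambda_R(I)=\SR(I)$ from \autoref{Ideal_WR}), and the converse extracts $y:=w_{m_0}$ and $s:=z\,w_{m_0+1}\in M_\infty(I)$ exactly as in the paper's proof. The final reduction in the weakly $s$-unital case, using that all ideals are then decomposable and $\SQ(R)=\SCu(R)$, also matches the paper.
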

\begin{proof}
	First note that, since $\SR(I)=\Lambda_R(I)\cap\SR(R)$ by \autoref{Ideal_WR}, we obtain that $\psi$ is well defined and respects inclusion. Also, by \autoref{Bij_SR}, 
	the map $\psi_\Lambda\colon \mathrm{Lat}_\mathrm{d}(R)\to\mathrm{Lat}(\Lambda(R))$ given by $\psi_\Lambda(I)=\Lambda_R(I)$ is a lattice isomorphism. Therefore, if we define $\phi\colon \mathrm{Lat}(\SQ(R))\to\mathrm{Lat}_\mathrm{d}(R)$, by $\phi(J,J\cap\SR(R))=\psi_\Lambda^{-1}(J)$, we see that $\phi$ is the inverse of $\psi$ and thus $\psi$ is a lattice isomorphism.
	
	Now, let $I$ be a quasipure ideal. By definition, $I$ is in particular decomposable. Let $[(x_{n})]\in \Lambda_R (I)$. Using \autoref{QP_Property}, we find for each $n\geq 1$ a sequence $(y_{n,m})_m\in\preS (I)$ such that $x_{n}\precsim_{1} y_{n,1}$, and thus $x_n\precsim_1y_{1,1}\oplus\cdots\oplus y_{n,1}$. This implies that $[(x_{n})]\leq \sup_{n} ([(y_{1,m})_m]+\ldots +[(y_{n,m})_m])$. Since $\SR (I)$ is a submonoid of $\Lambda_R(I)$ closed under suprema of (weakly) increasing sequences and $[(y_{n,m})_m]\in \SR(I)$ for each $n$, the above supremum is in $\SR (I)$. Thus, $\SR (I)$ is cofinal in $\Lambda_R (I)$.
	
	Conversely, assume that $I$ is decomposable and that $\SR (I)$ is cofinal in $\Lambda_R (I)$. Take any $x\in M_\infty(I)$, and by applying decomposability of $I$ choose a sequence $(x_n)\in M_\infty(I)$ such that $x=x_1$ and $x_n\precsim_1 x_{n+1}$ for all $n$. Since $[(x_n)]\in\Lambda_R(I)$ and $S(I)$ is cofinal,
	there is $[(y_n)]\in \SR(I)$ such that $[(x_1,x_2,\ldots )]\leq [(y_n)]$. Therefore there is $n$ with $x\precsim_{1}y_n$ and since $y_n\in \SR(I)$ there is $z_{n+1}\in M_\infty(R)$ such that $y_n=z_{n+1}y_{n+1}y_n$. Thus, with $y:=y_n\in M_\infty(I)$ and $s:=z_{n+1}y_{n+1}\in M_\infty(I)$, we have that $x\precsim_{1} y$ and $sy=y$. This proves that $I$ is quasipure.
	
	The last part of the statement follows using that any ideal in a weakly $s$-unital ring is decomposable.
\end{proof}
If $I$ is a decomposable ideal of a ring $R$, we set
$$\SQ _R(I) := (\Lambda _R(I), \SR (I)),$$
which is an ideal of $\SQ (R)$ by \autoref{thm:LatIsoIdeals}. 

\begin{remark}\label{rmk:IdCalg}
	The results in this and the previous sections apply, in particular, to C*-algebras. The structure of the non-closed ideals of those has been studied throughout the years (see, for example, \cite{Eff63, PedPet70}), but many fundamental questions still remain open. For example, it is not known whether every maximal ideal is closed. This is true in the unital case, but it remains an open problem in general. In the same vein, one can ask what are the trace ideals of projective modules over a C*-algebra $A$. In view of \autoref{Bij_WR}, this amounts to asking what are the ideals of $\SR(A)$. In this direction, it is shown in \cite[Theorem A]{gardella2023semiprimeidealscalgebras} that an ideal in a C*-algebra is idempotent if and only if it is semiprime. Since all trace ideals are idempotent, this implies that all trace ideals over a C*-algebra are semiprime. It is not hard to show that all Pedersen ideals of closed ideals of $A$ are quasipure and thus they are trace ideals of some projective module. Moreover, the converse holds for commutative $C^*$-algebras. 
\end{remark}


We now explore quotients and exactness in the categories $\SQ$ and $\SCu$.
\begin{lemma}\label{prp:CofinalQuot}
	Let $(S,W)\in \SQ$ and let $(I,I\cap W)$ be an ideal of $(S,W)$. If $I\cap W\subseteq I$ is cofinal, then $(S/I,W/I\cap W)$ is an object in $\SQ$.
\end{lemma}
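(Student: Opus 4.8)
The plan is to verify the two defining conditions for $(S/I, W/(I\cap W))$ to be an object of $\SQ$: that $S/I$ is a $\Q$-semigroup and that $W/(I\cap W)$ is a submonoid of $S/I$ closed under suprema of weakly increasing sequences. The first condition is essentially free: as recalled in the paragraph on quotients in ordered semigroups, $S/I$ satisfies (O1) and (O4) and carries the additive auxiliary relation $\prec_I$, so $S/I\in\Q$. For the identification of the second component I would first observe that the cofinality hypothesis forces the congruences $\sim_I$ and $\sim_{I\cap W}$ to agree on $W$: if $w\le w'+z$ with $z\in I$, then choosing $c\in I\cap W$ with $z\le c$ gives $w\le w'+c$. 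Hence $W/(I\cap W)$ embeds as the submonoid $\pi_I(W)\subseteq S/I$, where $\pi_I\colon S\to S/I$ denotes the quotient $\Q$-morphism, and only the closure under weakly increasing suprema remains to be checked.

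For that closure, let $(\pi_I(x_n))$ be weakly increasing in $S/I$ with $x_n\in W$, witnessed by an increasing sequence $(\bar y_m)$ with $\bar s:=\sup_n\pi_I(x_n)=\sup_m\bar y_m$; after passing to the subsequence $(\pi_I(x_{n(m)}))$ and reindexing I may assume $\bar y_m\le\pi_I(x_m)$ for all $m$. Since increasing sequences lift through the $\Q$-quotient (by adjoining the successive $I$-valued error terms to the chosen representatives), I would lift $(\bar y_m)$ to an increasing sequence $(v_m)$ in $S$ with $\pi_I(v_m)=\bar y_m$, set $\sigma:=\sup_m v_m$, and note $\pi_I(\sigma)=\bar s$ because $\pi_I$ preserves suprema of increasing sequences. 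The key step is then to produce, using the cofinality hypothesis, elements $u_m\in W$ with $v_m\le u_m\le\sigma$: the relation $\pi_I(v_m)=\bar y_m\le\pi_I(x_m)$ gives $v_m\le_I x_m$, and cofinality turns this into a genuine lower bound $v_m\le u_m$ for a suitable $u_m\in W$ representing $\pi_I(x_m)$, while arranging simultaneously the upper bound $u_m\le\sigma$ is the delicate point. Granting such $u_m$, the sequence $(u_m)$ lies in $W$ and is weakly increasing in $S$ with witness $(v_m)$: the lower requirement holds because $v_k\le v_n\le u_n$ for $n\ge k$, and the upper requirement holds because $u_n\le\sigma=\sup_k v_k$. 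Closure of $W$ in $\SQ$ then yields $\sigma=\sup_m u_m\in W$, whence $\bar s=\pi_I(\sigma)\in\pi_I(W)$, as desired. The case $(S,W)\in\SCu$ is identical once $S/I$ is known to be a $\Cu$-semigroup with $\prec_I$ coinciding with $\ll$.

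The main obstacle I expect is precisely the sandwiching $v_m\le u_m\le\sigma$ with $u_m\in W$: producing a representative of $W$ that sits below the supremum $\sigma$ and yet still dominates $v_m$. All comparisons inherited from the quotient are only valid modulo $I$, and upgrading them to honest inequalities in $S$ while keeping the upper bound $\sigma$ intact is exactly what the cofinality of $I\cap W$ in $I$ is meant to make possible; this is the technical heart of the argument. By contrast, the lifting of the increasing witness $(\bar y_m)$ and the verification that $(u_m)$ is weakly increasing are routine, and it is the fact that $\pi_I$ preserves weakly increasing suprema (being a $\Q$-morphism) that guarantees the element $\bar s$ we land on is genuinely the supremum of the original sequence, completing the proof.
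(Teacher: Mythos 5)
Your preparatory steps are fine and agree with the paper's: cofinality of $I\cap W$ in $I$ does show that $\leq_I$ and $\leq_{I\cap W}$ coincide on $W$, the witness $(\bar y_m)$ can be lifted to an increasing sequence $(v_m)$ in $S$ by absorbing the successive $I$-valued errors into the representatives, and $\pi_I(\sigma)=\bar s$ for $\sigma=\sup_m v_m$. But the step you yourself flag as the ``delicate point'' is a genuine gap, and it cannot be closed in the form you propose. From $\pi_I(v_m)\leq \pi_I(x_m)$ cofinality gives $v_m\leq x_m+c_m$ with $c_m\in I\cap W$, so the lower half of the sandwich, $v_m\leq u_m:=x_m+c_m\in W$, is available; but the inequality $\pi_I(x_m)\leq \pi_I(\sigma)$ only lifts to $u_m\leq \sigma+e_m$ for some $e_m\in I\cap W$. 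Cofinality lets you dominate $I$-errors by elements of $I\cap W$ --- it never lets you remove them --- so there is no mechanism to achieve the honest bound $u_m\leq\sigma$ in $S$. Note also that your plan aims at a conclusion stronger than the lemma needs: you try to show that the specific lift $\sigma$ lies in $W$, whereas the statement only requires that \emph{some} representative of the class $\bar s$ lie in $W$.

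The paper's proof resolves exactly this by letting the upper bound drift rather than holding $\sigma$ fixed. With $y_k\in I\cap W$ witnessing $z_k\leq x_{n_k}+y_k$ and $\tilde y_n\in I\cap W$ witnessing $x_n\leq z+\tilde y_n$, one forms the partial sums $S_k=\sum_{i=1}^{k}y_i$ and $\tilde S_k=\sum_{i=1}^{k}\tilde y_i$ and verifies
\[
z_k+S_{k-1}+\tilde S_{k-1}\ \leq\ x_{n_k}+S_k+\tilde S_{k-1}\ \leq\ z+S_k+\tilde S_{n_k},
\]
so the perturbed sequence $(x_{n_k}+S_k+\tilde S_{k-1})_k$, whose terms lie in $W$, is weakly increasing in $S$ with witness $(z_k+S_{k-1}+\tilde S_{k-1})_k$. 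Closure of $W$ under suprema of weakly increasing sequences then yields $z+S_\infty+\tilde S_\infty\in W$, and since $S_\infty,\tilde S_\infty\in I$ (an ideal is closed under suprema of increasing sequences), this element is congruent to $z$ modulo $I$ and hence represents $\bar s$ in $S/I$. In short: instead of pushing $W$-representatives below the fixed element $\sigma$, add the accumulated $I\cap W$-errors to both the sequence and its witness, and accept the larger --- but $I$-congruent --- supremum as the required representative in $W$. Your sketch, as it stands, leaves the heart of the argument unproved.
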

\begin{proof}
	Let $x,z\in W$ be such that $x\leq_{I} z$ in $S$. Since $W\cap I$ is cofinal in $I$, it follows that $x\leq z+y$ for some $y\in W\cap I$. Therefore, $x\leq_{W\cap I} z$. This implies that $W/I\cap W$ order-embeds into $S/I$.
	
	To see that $W/I\cap W$ is closed under weakly increasing sequences, let $([x_{n}])_n$ be a weakly increasing sequence in $S/I$ with $x_{n}\in W$. By definition, 
	there exists an increasing sequence $([z_{k}])_k$ in $S/I$ satisfying: 
	\begin{enumerate}[(i)]
		\item For every $k$ there exists $n_{k}$ such that $[z_{k}]\leq [x_{n}]$ for every $n\geq n_{k}$.
		\item $[x_{n}]\leq \sup_{k} [z_{k}]$ for every $n$.
	\end{enumerate}
	
	Without loss of generality, we may assume that $(z_{k})_k$ is increasing in $S$. Let $z$ be its supremum. Since $W\cap I$ is cofinal in $I$, one gets an increasing sequence $(n_k)$ of positive integers such that:
	\begin{enumerate}[(i)']
		\item For every $k$ there exists $y_{k}\in I\cap W$ such that $z_{k}\leq x_{n_k}+y_{k}$.
		\item For every $n$, there exists $\tilde{y}_{n}\in I\cap W$ such that $x_{n}\leq z+\tilde{y}_{n}$.
	\end{enumerate}
	
	Consider the following elements in $W\cap I$:
	\[
	S_{k}:=\sum_{i=1}^{k}y_{i},\quad\text{and}\quad \tilde{S}_{k}:=\sum_{i=1}^{k}\tilde{y}_{i}.
	\]
	
	Note that
	\[
	z_{k}+S_{k-1}+\tilde{S}_{k-1}\leq x_{n_k}+S_{k}+\tilde{S}_{k-1}\leq z+S_{k}+\tilde{S}_{n_k}
	\]
	for each $k\ge 1$. 
	
	Denote by $S_\infty$ and $\tilde{S}_{\infty}$ the suprema of $S_k$ and $\tilde{S}_k$ respectively. The sequence $(x_{n_k}+S_{k}+\tilde{S}_{k-1})_k$ satisfies (i) and (ii) in \autoref{par:wincr} with respect to the increasing sequence $(z_{k}+S_{k-1}+\tilde{S}_{k-1})_k$, and thus it is weakly increasing in $S$. Indeed, for $l\ge k$ we have
	$$z_k+S_{k-1}+\tilde{S}_{k-1} \le z_l +S_{k-1} +\tilde{S}_{k-1} \le x_{n_l} + S_l +\tilde{S}_{l-1},$$
	and for all $k\ge 1$ we have
	$$x_{n_k}+S_k+\tilde{S}_{k-1} \le z+S_{k}+\tilde{S}_{n_k} \le z+S_{\infty}+ \tilde{S}_{\infty} = \sup (z_k+S_{k-1}+\tilde{S}_{k-1}).$$
	
	Moreover, since the elements of the sequence are in $W$ and $W\subseteq S$ is closed under suprema of weakly increasing sequences, we have 
	\[
	z+S_{\infty}+\tilde{S}_{\infty} = \sup_n (x_n+S_{\infty}+\tilde{S}_{\infty})\in W.
	\]
	
	This implies that $[z]\in W/W\cap I$ as required.
\end{proof}
\begin{parag}[Exact sequences in the categories $\SQ$ and $\SCu$]
	Let $\varphi\colon M\rightarrow N$ be a morphism of positively ordered monoids. As in 
	\cite{Ciuperca2010}, we define
	\[
	\begin{split}
		{\rm Im}(\varphi) &=\{ (h_{1},h_{2})\in N\times N\mid h_{1}\leq \varphi (s)+h_{2}\text{ for some }s\in M \},\\
		{\rm ker}(\varphi) &=\{ (s_{1},s_{2})\in M\times M\mid \varphi (s_{1})\leq \varphi (s_{2}) \},
	\end{split}
	\]
	and we say that a sequence 
	\[
	0\rightarrow 
	(I,J) \rightarrow
	(S,W) \rightarrow
	(Z,T) \rightarrow
	0
	\]
	in $\SQ$ (respectively, in $\SCu$) is \emph{exact} if
	\[
	0\rightarrow 
	I \rightarrow
	S \rightarrow
	Z \rightarrow
	0,
	\quad\text{and}\quad 
	0\rightarrow 
	J \rightarrow
	W \rightarrow
	T \rightarrow
	0
	\]
	are exact in the standard sense using the above definitions of image and kernel.
\end{parag}
\begin{proposition}\label{prp:ExactSeq}
	Let $(S,W)\in \SQ$  and let $(I,I\cap W)$ be an ideal of $(S,W)$. Assume that $I\cap W\subseteq I$ is cofinal. Then, the sequence
	\[
	0\rightarrow (I,I\cap W)\rightarrow (S,W)\rightarrow 
	(S/I,W/I\cap W)
	\rightarrow 0
	\]
	is exact.
\end{proposition}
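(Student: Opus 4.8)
The plan is to unwind the definition of exactness in $\SQ$ and verify separately the two underlying exact sequences of positively ordered monoids,
\[
0\to I\to S\to S/I\to 0 \quad\text{and}\quad 0\to I\cap W\to W\to W/(I\cap W)\to 0.
\]
First I would invoke \autoref{prp:CofinalQuot} to guarantee that $(S/I,W/(I\cap W))$ is indeed an object of $\SQ$, so that the right-hand term is legitimate; this is one place where cofinality is consumed. I would also note that the inclusion $(I,I\cap W)\hookrightarrow(S,W)$ and the quotient map onto $(S/I,W/(I\cap W))$ are $\SQ$-morphisms, so that only the exactness of the two rows remains to be checked.

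For the top row the verification is formal. The inclusion $\iota\colon I\hookrightarrow S$ is an order-embedding because $I$ carries the order induced from $S$, and the quotient map $\pi\colon S\to S/I$ is surjective by construction. For the equality $\mathrm{Im}(\iota)=\ker(\pi)$ at the middle term I would simply read off the definitions: $(s_1,s_2)\in\ker(\pi)$ means $\pi(s_1)\le\pi(s_2)$, that is $s_1\le_I s_2$, which by the very definition of the quotient order means $s_1\le s_2+z$ for some $z\in I$, and this is precisely the condition $(s_1,s_2)\in\mathrm{Im}(\iota)$. Hence the two subsets of $S\times S$ coincide tautologically.

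The bottom row is where the hypothesis does genuine work. Again the inclusion $I\cap W\hookrightarrow W$ is an order-embedding and $\pi|_W\colon W\to W/(I\cap W)$ is surjective by construction, so the crux is the identity $\mathrm{Im}(I\cap W\hookrightarrow W)=\ker(\pi|_W)$. Here one must remember that $W/(I\cap W)$ is regarded as a submonoid of $S/I$, so that $\pi(w_1)\le\pi(w_2)$ is computed in $S/I$ and means $w_1\le w_2+z'$ for some $z'\in I$; what the image relation instead demands is a witness $z\in I\cap W$. The inclusion $\mathrm{Im}\subseteq\ker$ is immediate since $I\cap W\subseteq I$, and the reverse inclusion is exactly the order-matching established in the first paragraph of the proof of \autoref{prp:CofinalQuot}: because $I\cap W$ is cofinal in $I$, one may replace the witness $z'\in I$ by an element $z\in I\cap W$ with $z'\le z$, whence $w_1\le w_2+z$. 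Thus $\le_{I\cap W}$ on $W$ agrees with the order restricted from $S/I$, and the two subsets of $W\times W$ coincide.

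I expect the main obstacle to be precisely this last inclusion $\ker(\pi|_W)\subseteq\mathrm{Im}(I\cap W\hookrightarrow W)$, equivalently the assertion that a comparison of elements of $W$ witnessed in $I$ can always be re-witnessed in $I\cap W$. This is where the cofinality assumption is indispensable; everything else is a direct transcription of the definitions of $\mathrm{Im}$, $\ker$, and the quotient order $\le_I$.
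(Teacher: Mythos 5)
Your proposal is correct and takes essentially the same approach as the paper: both reduce exactness to the two monoid sequences, both rely on \autoref{prp:CofinalQuot} to make sense of the quotient object, and both consume the cofinality hypothesis at exactly the same point, namely re-witnessing a comparison $w_1\leq w_2+z'$ with $z'\in I$ by some $z\in I\cap W$. The only cosmetic differences are that you verify exactness of the top row directly from the definitions of $\mathrm{Im}$ and $\mathrm{ker}$ where the paper cites the argument of \cite[Theorem 4.1]{Ciuperca2010}, and you obtain $\mathrm{Im}(I\cap W\to W)\subseteq \mathrm{ker}(W\to W/I\cap W)$ immediately from $I\cap W\subseteq I$ rather than through the paper's intermediate identity $\mathrm{ker}(W\to W/I\cap W)=\mathrm{ker}(S\to S/I)\cap (W\times W)$.
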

\begin{proof}
	The first component is exact by the same argument as in \cite[Theorem 4.1]{Ciuperca2010}. 
	
	To see that $0\to I\cap W\to W\to W/I\cap W\to 0$ is exact, note that 
	\[
	{\rm ker}( W\to W/I\cap W ) = 
	{\rm ker}( S\to S/I )\cap (W\times W)
	\]
	and 
	\[
	{\rm Im}(I\cap W\to W)\subseteq {\rm Im}(I\to S).
	\]
	
	Thus, using that ${\rm ker}( S\to S/I ) = {\rm Im}(I\to S)$, one gets 
	\[
	{\rm Im}(I\cap W\to W) \subseteq 
	{\rm ker}( W\to W/I\cap W ).
	\]
	
	To see the converse inclusion, take $(r_1,r_2)\in W\times W$ such that $[r_1]\leq [r_2]$ in $W/I\cap W$. Then, there exists $s\in I\cap W$ such that $r_1\leq r_2+s$. This shows that $(r_1,r_2)\in {\rm Im}(I\cap W\to W)$, as desired.
\end{proof}
\begin{theorem}\label{prp:ExactSCu}
	Let $I$ be a two-sided, quasipure ideal of a ring $R$. Then, the sequence 
	\[
	0\rightarrow \SQ_R (I)\rightarrow \SQ (R)\rightarrow \SQ (R/I)\rightarrow 0
	\]
	is exact. 
\end{theorem}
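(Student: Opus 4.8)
The plan is to deduce this from the general exactness result \autoref{prp:ExactSeq}, applied to the $\SQ$-object $\SQ(R)=(\Lambda(R),\SR(R))$ and the ideal $\SQ_R(I)=(\Lambda_R(I),\SR(I))$, and then to identify the resulting quotient object with $\SQ(R/I)$ by means of \autoref{thm:quo}. All the substantive work has already been carried out in those auxiliary results, so the argument is essentially one of correctly assembling the pieces.

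First I would check that the hypotheses of \autoref{prp:ExactSeq} are met for $(S,W)=(\Lambda(R),\SR(R))$ and its ideal $(I,I\cap W)$. By \autoref{Ideal_WR} one has $\SR(I)=\Lambda_R(I)\cap\SR(R)$, so $\SQ_R(I)=(\Lambda_R(I),\Lambda_R(I)\cap\SR(R))$ is exactly an ideal of $(\Lambda(R),\SR(R))$ in the sense of \autoref{IdealsSQ}. Since $I$ is quasipure, \autoref{thm:LatIsoIdeals} guarantees that $I$ is decomposable and that $\SR(I)$ is cofinal in $\Lambda_R(I)$; this is precisely the cofinality hypothesis $I\cap W\subseteq I$ required by \autoref{prp:ExactSeq}. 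Applying that proposition therefore yields that the sequence
\[
0\rightarrow (\Lambda_R(I),\SR(I))\rightarrow (\Lambda(R),\SR(R))\rightarrow (\Lambda(R)/\Lambda_R(I),\,\SR(R)/\SR(I))\rightarrow 0
\]
is exact in $\SQ$.

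It then remains to identify the cokernel pair $(\Lambda(R)/\Lambda_R(I),\SR(R)/\SR(I))$ with $\SQ(R/I)=(\Lambda(R/I),\SR(R/I))$ as an object of $\SQ$. Here I would invoke \autoref{thm:quo}: since $I$ is decomposable, part (i) provides an isomorphism $\bar{\pi}_I\colon\Lambda(R)/\Lambda_R(I)\to\Lambda(R/I)$ sending $[(x_n)]_{\Lambda_R(I)}$ to $[(\pi(x_n))]$, while part (ii), using that $I$ is quasipure, provides an isomorphism $\SR(R)/\SR(I)\to\SR(R/I)$ induced by the same coordinatewise quotient map $\SR(\pi)$. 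The one genuinely new point to verify—and the step requiring the most care—is that these two isomorphisms are compatible with the inclusions $\SR(R)/\SR(I)\hookrightarrow\Lambda(R)/\Lambda_R(I)$ and $\SR(R/I)\hookrightarrow\Lambda(R/I)$. This is immediate once one observes that both maps are defined by applying the single ring quotient $\pi\colon R\to R/I$ entrywise to representatives: on the common subsemigroup $\SR(R)/\SR(I)$ the two prescriptions literally coincide, so $\bar{\pi}_I$ restricts to the isomorphism of part (ii). Consequently $\bar{\pi}_I$ is an $\SQ$-isomorphism $(\Lambda(R)/\Lambda_R(I),\SR(R)/\SR(I))\cong\SQ(R/I)$.

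Finally, since exactness in $\SQ$ is defined componentwise and is preserved under composing the quotient map with an $\SQ$-isomorphism of the third term, feeding this identification into the exact sequence obtained above yields exactness of
\[
0\rightarrow \SQ_R(I)\rightarrow \SQ(R)\rightarrow \SQ(R/I)\rightarrow 0,
\]
as desired. In summary, the heavy lifting is supplied by \autoref{prp:ExactSeq}, \autoref{thm:quo}, and \autoref{thm:LatIsoIdeals}, and the only new verification is the compatibility of the two quotient isomorphisms with the respective inclusions, which follows because both descend from the single ring homomorphism $\pi$.
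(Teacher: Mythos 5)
Your proposal is correct and follows exactly the paper's route: the paper's proof is the one-line observation that the theorem ``follows immediately from \autoref{thm:quo} and \autoref{prp:ExactSeq}.'' Your write-up simply makes explicit the details the paper leaves implicit --- verifying the cofinality hypothesis via \autoref{thm:LatIsoIdeals} and \autoref{Ideal_WR}, and checking that the isomorphisms of \autoref{thm:quo} are compatible with the inclusions because both are induced entrywise by the single quotient map $\pi$ --- all of which is accurate.
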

\begin{proof}
	This follows immediately from \autoref{thm:quo} and \autoref{prp:ExactSeq}.
\end{proof}
\begin{remark}
	The results in this section apply almost verbatim to the category $\SCu$ except for this last result, for which we will need to restrict to a particular class of rings; see \autoref{rmk:exactscu}.  
\end{remark}
\section{Dense and left normal rings}
\label{sec:leftdense}
In this section we introduce the notions of dense and left normal rings. We show that they constitute a large class for which the functors $\Lambda(\text{--})$ and $\SR(\text{--})$ are well-behaved; see \autoref{thm:dense}. 

\begin{parag}[Dense rings]
	\label{def:densering}
	Recall that a relation $\prec$ on a set $X$ is termed \emph{dense} (or also \emph{idempotent}) if for any $x,y\in X$ with $x\prec y$, there exists $z\in X$ with $x\prec z\prec y$. 
	
	We say that a ring $R$ is \emph{dense} provided the relation $\precsim_1$ is dense on $M_\infty(R)$.	If $R$ is a weakly $s$-unital ring (in particular, if it is unital) then the relation $\precsim_1$ on $M_\infty(R)$ is dense, simply because it is reflexive. This also holds more generally, for example, when $R$ is idempotent. Indeed, if $x\precsim_1 y$ in $M_\infty(R)$, write $x=ayb$ and use that $R$ is idempotent to decompose $a=a_1'a_2$ and $b=b_2b_1'$ in $M_\infty(R)$. Let $z=a_2yb_2$, and then we have $x=a_1'(a_2yb_2)b_1'\precsim_1z\precsim_1 y$.
\end{parag}

\begin{parag}[Left normal rings]
	\label{def:normalring}
	Let $R$ be a ring. We say that $R$ is \emph{left normal} if, for every $a,b,c\in M_\infty (R)$ such that 
	\[
	a=ba,\quad\text{and}\quad b=cb,
	\]
	there exist $d,e\in M_\infty (R)$ such that
	\[
	a=da,\quad 
	d=ed,\quad\text{and}\quad
	e=ce.
	\]	
	We give below some examples of left normal rings.
\end{parag}

Recall that a unital ring $R$ is \emph{weakly semihereditary} if for any $R$-linear maps $f\colon A\to B$ and $g\colon B\to C$ between finitely generated projective modules such that $g\circ f=0$, there is a decomposition $B=B'\oplus B''$ such that $\mathrm{im}(f)\subseteq B'\subseteq \ker(g)$. This is a right-left symmetric notion introduced by G. M. Bergman and satisfied by every right hereditary ring; see for example \cite[Part 1.11]{DickSch88}. Right semihereditary rings are also weakly semihereditary, and thus this class contains in particular all von Neumann regular rings as well as the path $K$-algebra of a quiver (where $K$ is a field).
\begin{lemma}
	Any weakly semihereditary ring is left normal.
\end{lemma}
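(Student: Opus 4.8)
The plan is to read the two relations defining left normality as containments of images and kernels of endomorphisms of a free module, and then to feed them \emph{twice} into the weak semiheredity hypothesis, extracting an idempotent from each application.

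First I would reduce to finite matrices. Since $a,b,c\in M_\infty(R)$ have only finitely many nonzero entries, I choose $n$ with $a,b,c\in M_n(R)$ and work inside the free right $R$-module $P=R^n$ (here $R$ is unital), whose endomorphism ring is $M_n(R)$ acting by left multiplication $L_m(v)=mv$. Under this identification a direct-sum decomposition $P=B'\oplus B''$ corresponds to an idempotent $e_1=e_1^2\in M_n(R)$ with $B'=\mathrm{im}(L_{e_1})$, and since $L_{e_1}$ is then a projection, $\mathrm{im}(L_{e_1})$ coincides with its fixed-point set $\{u\mid e_1u=u\}$. The dictionary I would record, verified by evaluating on standard basis columns, is: $\mathrm{im}(L_x)\subseteq\mathrm{im}(L_{e_1})$ iff $e_1x=x$, and $\mathrm{im}(L_{e_1})\subseteq\ker(L_{1-y})$ iff $ye_1=e_1$ (where $1$ is the identity of $M_n(R)$).

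For the first application I note that $a=ba$ gives $(1-b)a=0$, so the endomorphisms $f=L_a$ and $g=L_{1-b}$ of $P$ satisfy $g\circ f=L_{(1-b)a}=0$. Weak semiheredity then yields $P=B'\oplus B''$ with $\mathrm{im}(L_a)\subseteq B'\subseteq\ker(L_{1-b})$; writing $B'=\mathrm{im}(L_{e_1})$ and applying the dictionary gives $e_1a=a$ and $be_1=e_1$. Next I observe that $b=cb$ forces $ce_1=c(be_1)=(cb)e_1=be_1=e_1$. A second application of weak semiheredity, now to $f=L_{e_1}$ and $g=L_{1-c}$ (which compose to zero since $(1-c)e_1=e_1-ce_1=0$), produces an idempotent $e$ with $ee_1=e_1$ and $ce=e$. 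Setting $d=e_1$ then gives $a=e_1a=da$, $ed=ee_1=e_1=d$, and $e=ce$, as required, with all elements lying in $M_n(R)\subseteq M_\infty(R)$.

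The routine but error-prone part is precisely the dictionary between module-theoretic inclusions and one-sided algebraic identities; the essential thing to get right is the handedness, namely viewing $P$ as a right module with endomorphisms acting on the left, so that the two inclusions translate into the left-sided equations $e_1x=x$ and $ye_1=e_1$. Once this bookkeeping is fixed, the two invocations of weak semiheredity together with the one-line computation $ce_1=e_1$ close the argument, and no further property of $R$ is needed.
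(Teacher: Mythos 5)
Your proof is correct and follows the same route as the paper: the first application of weak semiheredity, to $f=L_a$ and $g=L_{1-b}$ on $P=R^n$, producing an idempotent $e_1$ with $e_1a=a$ and $be_1=e_1$, together with the computation $ce_1=cbe_1=be_1=e_1$, is exactly the paper's argument. Your second application of weak semiheredity is redundant, though harmless: since $e_1$ is idempotent you can simply take $d=e=e_1$, because $e_1=e_1e_1$ already gives $d=ed$ and $ce_1=e_1$ gives $e=ce$ --- this one-idempotent shortcut is what the paper does.
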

\begin{proof}
	Let $R$ be as in the statement, and let $a,b,c\in M_\infty(R)$ be such that $a=ba$ and $b=cb$. By passing to matrices over $R$, we may assume that the elements are in $R$.
	
	Consider the $R$-linear maps $f\colon R\to R$ and $g\colon R\to R$ given by $f(x)=ax$ and $g(y)=(1-b)y$. Clearly $g\circ f=0$, hence by assumption there is an idempotent $e\in R$ such that $R=eR\oplus (1-e)R$ and $a=ea$, while $(1-b)e=0$. Let $d=e$, and note that $e=be=cbe=ce$. Thus $R$ is left normal.
\end{proof}	

The definition and terminology of left normality is motivated from topology.
\begin{lemma}
	\label{lma:normal}
	Let $R=C(X, \mathbb{K})$ with $X$ a normal space and $\mathbb{K}=\R$ or $\mathbb{C}$. Let $f,g,h\in  R$ be such that $f=gf$ and $g=hg$. Then, there exist $f',g'\in R$ such that $f=f'f$, $f'=g'f'$ and $g'=hg'$.
\end{lemma}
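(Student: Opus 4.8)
The plan is to translate the three multiplicative relations into inclusions of zero and cozero sets and then to interpolate with Urysohn's lemma. For $\phi\in R$ write $\mathrm{coz}(\phi)=\{x\in X\mid \phi(x)\neq 0\}$, which is open, and note $\{\phi=1\}=\phi^{-1}(\{1\})$ is closed. The relation $f=gf$ says $(1-g)f=0$ pointwise, so $g\equiv 1$ on $\mathrm{coz}(f)$; since $\{g=1\}$ is closed and contained in $\mathrm{coz}(g)$ (as $1\neq 0$), this gives $\overline{\mathrm{coz}(f)}\subseteq\{g=1\}\subseteq\mathrm{coz}(g)$. Likewise $g=hg$ yields $\mathrm{coz}(g)\subseteq\{h=1\}$. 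So the problem reduces to the purely topological data $\overline{\mathrm{coz}(f)}\subseteq\mathrm{coz}(g)$ together with $\mathrm{coz}(g)\subseteq\{h=1\}$.

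First I would use normality of $X$ to insert an intermediate open set: since $\overline{\mathrm{coz}(f)}$ is closed and sits inside the open set $\mathrm{coz}(g)$, there is an open $U$ with $\overline{\mathrm{coz}(f)}\subseteq U\subseteq \overline{U}\subseteq \mathrm{coz}(g)$. Then I would apply Urysohn's lemma twice: choose $f'\in C(X,[0,1])$ with $f'\equiv 1$ on $\overline{\mathrm{coz}(f)}$ and $f'\equiv 0$ off $U$, so that $\mathrm{coz}(f')\subseteq U$; and choose $g'\in C(X,[0,1])$ with $g'\equiv 1$ on $\overline{U}$ and $g'\equiv 0$ off $\mathrm{coz}(g)$, so that $\mathrm{coz}(g')\subseteq \mathrm{coz}(g)$. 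Viewing $[0,1]\subseteq\mathbb{K}$, both $f'$ and $g'$ lie in $R$, whether $\mathbb{K}=\R$ or $\mathbb{C}$.

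Finally I would verify the three identities by tracking where each function equals $1$: we get $f=f'f$ because $f'\equiv 1$ on $\mathrm{coz}(f)$; we get $f'=g'f'$ because $\mathrm{coz}(f')\subseteq U\subseteq\overline{U}$ and $g'\equiv 1$ on $\overline{U}$; and we get $g'=hg'$ because $\mathrm{coz}(g')\subseteq\mathrm{coz}(g)\subseteq\{h=1\}$, so $h\equiv 1$ wherever $g'\neq 0$. Each identity follows pointwise, being trivial on the zero set of the relevant factor and reducing to $1\cdot(\cdot)$ on its cozero set.

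There is no serious obstacle here; the only point needing care is that the passage from the pointwise identities to inclusions of closed sets uses the closedness of $\{g=1\}$ and $\{h=1\}$ (so one may pass to closures of cozero sets), and that the intermediate open set $U$ is precisely what guarantees the Urysohn function $f'$ has $\overline{\mathrm{coz}(f')}\subseteq\mathrm{coz}(g)$, keeping $g'$ supported where $h\equiv 1$. This is exactly the topological shadow of the left-normality condition of \autoref{def:normalring}, with $(a,b,c)$ replaced by $(f,g,h)$ and the auxiliary idempotents replaced by the Urysohn functions $(f',g')$.
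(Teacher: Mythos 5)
Your proof is correct and follows essentially the same route as the paper's: both reduce the hypotheses to the inclusions $\overline{\mathrm{coz}(f)}\subseteq\mathrm{coz}(g)\subseteq\{h=1\}$, interpose a buffer set between the closed support of $f$ and the open cozero set of $g$, and apply Urysohn's lemma twice to manufacture $f'$ and $g'$. The only cosmetic difference is that you produce the buffer open set $U$ directly from normality, whereas the paper takes the level set $D=\{t\in X : g(t)>1/2\}$ of $g$ itself; both variants are equally valid, since normality is needed for Urysohn's lemma in any case.
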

\begin{proof}
	Let $f,g,h\in C(X, \mathbb{K})$ be as in the statement.
	Consider the closed set $C=\mathrm{supp}(f)$ and the open set $V=\mathrm{coz}(g)$, where $\mathrm{coz}(g)$ denotes the cozero set of $g$ and $\mathrm{supp}(f)$ denotes the closure of $\mathrm{coz}(f)$.
	
	Put $D= \{ t\in X : g(t) > 1/2\}$, which is an open subset of $X$. We have $C\subseteq D$ and $\ol{D}\subseteq V$.	
	
	By Urysohn's Lemma, there are $f',g'\in C(X,\mathbb{K})$ such that $0\leq f',g'\leq 1$, $f'$ is $1$ on $C$ and $0$ out of $D$, and $g'$ is $1$ on $\ol{D}$ and $0$ out of $V$. 
	Then $f',g'$ satisfy the desired conditions.
\end{proof}
\begin{remark}
	It is plausible that \autoref{lma:normal} can be improved to show that $C(X)$ is, at least in the complex case, a left normal ring. However a proof has not come around yet.
\end{remark}	
Another instance of left normality comes from C*-algebra theory. Recall that a C*-algebra $A$ is a complex Banach algebra with involution  such that $\Vert aa^*\Vert=\Vert a\Vert^2$ for any $a\in A$. Important elements in C*-algebras are the so-called \emph{positive} elements, that is, those of the form $x^*x$.

The subclass of \emph{SAW*-algebras}, introduced by Pedersen in \cite{Ped86}, plays an important role in the study of multiplier and corona algebras. We recall the definition here for convenience. A C*-algebra $A$ is an SAW*-algebra if for any given positive elements $x,y\in A$ such that $xy=0$, there is a positive element $e\in A$ such that $ex=x$ and $ey=0$. (Because of the involution, note that this also implies $xe=x$ and $ye=0$.) It was proved in \cite[Theorem 13]{Ped86} that the corona algebra of any $\sigma$-unital (in particular, of any separable) C*-algebra is an SAW*-algebra. It is an open problem to decide whether SAW*-algebras are closed under the passage to matrices, alghough this is known in some significant cases. For example, if $A$ is a $\sigma$-unital C*-algebra, $\mathcal{M}(A)$ is its multiplier algebra, and $\mathcal{M}(A)/A$ is the corona algebra, then $M_n(\mathcal{M}(A))\cong \mathcal{M} (M_n(A))$ for all $n\geq 1$, and therefore $M_n(\mathcal{M}(A)/A)\cong \mathcal{M}(M_n(A))/M_n(A)$. Another example is constituted by the class of Rickart C*-algebras, which are also SAW*-algebras and that were shown to be matrix stable in \cite[Theorem 3.4]{AraGol93}.


\begin{proposition}
	\label{prp:saw}
	Let $A$ be a $C^*$-algebra such that $M_n(A)$ is an SAW*-algebra for all $n\ge 1$. Then $A$ is a left normal ring.
\end{proposition}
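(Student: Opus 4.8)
The plan is to reduce the statement to a question about positive elements and the existence of \emph{local units}, and then to bring in the SAW*-property. First I would reduce to a matrix algebra: since $a,b,c\in M_\infty (A)$ have only finitely many nonzero entries, they all lie in $B:=M_n(A)$ for some $n$, and it suffices to produce $d,e$ inside $B$, as these then lie in $M_\infty(A)$. By hypothesis $B$ is an SAW*-algebra. I would then translate the two relations $a=ba$ and $b=cb$ into statements about the positive elements $p:=aa^*$ and $q:=bb^*$ of $B$. From $a=ba$ one gets $bp=p$ and, taking adjoints, $pb^*=p$, whence $p=bpb^*\le \|a\|^2 q$; from $b=cb$ one gets $cq=q$.

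From these two computations I would extract the two facts that drive the argument. First, because $cq=q$, the element $c$ acts as a left unit on the closed left ideal $\ol{qB}$, and hence $cz=z$ for every $z$ in the hereditary subalgebra $C:=\ol{qBq}$; in particular $e=ce$ will hold \emph{automatically} for any $e\in C$. Second, since $0\le p\le \|a\|^2 q$, standard C*-algebra theory gives $p\in C$.

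The key reduction is then that it suffices to find positive contractions $d,e\in C$ with $dp=p$ and $ed=d$. Indeed, $dp=p$ together with $d=d^*$ gives $p(1-d)=0$ and hence $(1-d)aa^*(1-d)=0$, so that $(1-d)a=0$, i.e. $a=da$; the relation $d=ed$ is literally $ed=d$; and $e=ce$ follows from $e\in C$ by the first consequence above. Thus the entire statement reduces to the assertion that the positive element $p$, and then the positive element $d$, admit local units (positive contractions acting as their identity) lying in the hereditary subalgebra $C$. Note that, once such local units exist, one produces them in the correct order: first a local unit $d\in C$ for $p$, then a local unit $e\in C$ for $d$, which chains the relations $dp=p$ and $ed=d$ exactly as required.

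To produce these local units I would use the SAW*-property. Observe that the defining property of an SAW*-algebra, applied with the second positive element equal to $0$, already produces for any positive $x$ a positive element $e_0$ with $e_0 x=x$; cutting down by $t\mapsto \min(t,1)$ arranges $0\le e_0\le 1$. The one genuinely delicate point, and what I expect to be the main obstacle, is ensuring that the local units can be taken \emph{inside} $C$ rather than merely in $B$: the difference between ``$e$ acts as the identity on the range of $d$'' (the exact equation $ed=d$) and the weaker containment of ranges is exactly the noncommutative-Urysohn strength that SAW* is designed to supply, and keeping the separating element within the prescribed hereditary subalgebra is where the full hypothesis that \emph{every} amplification $M_n(A)$ is SAW* must be used. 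I would resolve this either by invoking that hereditary subalgebras of SAW*-algebras are again SAW* (and then applying the local-unit observation directly inside the SAW*-algebra $C$), or, more concretely, by applying the SAW*-property of $B$ to the orthogonal pair consisting of $p$ (respectively $d$) and a strictly positive element $y_0$ of the complementary hereditary subalgebra $\{z\in B : zq=qz=0\}$: the separating element then satisfies $e_0 p=p$ and $e_0 y_0=0$, the latter forcing $e_0\in C$. Checking that this complementary algebra is well-behaved enough for a single $y_0$ to detect it is the SAW*-specific heart of the proof.
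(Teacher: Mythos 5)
Your reduction is set up correctly: from $a=ba$ and $b=cb$ one does get $bp=p$, $cq=q$, $p=bpb^*\leq\|a\|^2 q$, hence $p\in C=\overline{qBq}$, and $c$ acts as a left unit on $\overline{qB}\supseteq C$; and it is true that positive $d,e\in C$ with $dp=p$ and $ed=d$ would finish the proof. But the step you yourself flag as the main obstacle --- producing such local units \emph{inside} $C$ --- is a genuine gap, and neither of your proposed fixes closes it. The assertion that hereditary subalgebras of SAW*-algebras are again SAW* is not a known fact you can cite, and no proof is sketched; the exactness of the equations is precisely what blocks the obvious approximate-unit manipulations. The annihilator route fails on two counts: the annihilator $\{z\in B: zq=qz=0\}$ need not be $\sigma$-unital, so a strictly positive $y_0$ may not exist at all; and even when it does, $e_0y_0=0$ only places $e_0$ in the \emph{double} annihilator of $q$, which can be strictly larger than $\overline{qBq}$ --- indeed, if $\overline{qBq}$ is essential in $B$ then the annihilator is $\{0\}$ and the condition on $e_0$ is vacuous. (Also, your guess that the hypothesis ``$M_n(A)$ is SAW* for all $n$'' is what supplies the localization is a misreading: that hypothesis is used only for the trivial reduction from $M_\infty(A)$ to $M_n(A)$, in your argument just as in the paper's.)

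The paper closes exactly this gap by a different choice of orthogonal pairs, avoiding any appeal to hereditary subalgebras. From $(1-b)aa^*=0$ one gets $(1-b)^*(1-b)\,aa^*=0$, and applying the SAW*-property to the pair $\left(aa^*,\,(1-b)^*(1-b)\right)$ yields a positive $d$ with $daa^*=aa^*$ and $(1-b)^*(1-b)d=0$; the norm identities $\|(d-1)a\|^2=\|(d-1)aa^*(d-1)^*\|=0$ and $\|(1-b)d\|^2=0$ give $a=da$ and $bd=d$. Applying SAW* a second time to $\left(bb^*,\,(1-c)^*(1-c)\right)$ gives a positive $e$ with $eb=b$ and $e=ce$, and then $ed=ebd=bd=d$ chains everything together. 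Note that this construction incidentally delivers the localization you were after for free: $bd=d$ gives $d=bdb^*\leq\|d\|\,bb^*$, so $d\in C$; but the relation $e=ce$ is extracted directly from the separation against $(1-c)^*(1-c)$ rather than from membership of $e$ in $C$. If you want to salvage your write-up, replace both of your proposed local-unit mechanisms by these two applications of SAW* with the co-elements $(1-b)^*(1-b)$ and $(1-c)^*(1-c)$.
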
	
\begin{proof}
	Let $a,b,c\in M_{\infty}(A)$ such that $a=ba$ and $b=cb$. We may assume that $a,b,c\in M_{n}(A)$ for some $n$, which by assumption is an SAW*-algebra, hence without loss of generality $a,b,c\in A$.
	
	Then $aa^*=baa^*$ and $bb^*=cbb^*$. From the first equality we get $(1-b)aa^*=0$, whence also $(1-b)^*(1-b)aa^*=0$. Thus there exists a positive element $d\in A$ such that $daa^*=aa^*$ and $(1-b)^*(1-b)d=0$.
	
	Using the first equality we get
	\[
	0=\Vert (d-1)aa^*(d-1)^*\Vert=\Vert (d-1)a\Vert^2,
	\]
	whence $a=da$. Similarly, the second equality yields $bd=d$.
	
	Now, since $(1-c)bb^*=0$, we have $(1-c)^*(1-c)bb^*=0$, hence there exists a positive element $e\in A$ such that $ebb^*=bb^*$ and $(1-c)^*(1-c)e=0$. Thus, arguing as before, we get $eb=b$ and $(1-c)e=0$, that is, $e=ce$.
	
	Finally, $ed=ebd=bd=d$, as required.
\end{proof}	

The relevance of considering dense and left normal rings is reflected in Theorem~\ref{thm:dense} below. First we need the following lemma.
\begin{lemma}
	\label{lem:easylemma}
	Let $R$ be a left normal ring, and let $a,b,c\in M_\infty (R)$ be such that
	\[
	a=ba, \quad \text{and}\quad b=cb.
	\]
	
	Then, there exists a sequence $(d_n)$ in $M_\infty (R)$ such that 
	\[
	a=d_1a,\quad  
	d_n= d_{n+1}d_n,\quad\text{and}\quad
	d_n=cd_n
	\]
	for all $n$.
\end{lemma}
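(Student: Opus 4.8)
The plan is to iterate the defining property of left normality, exploiting the fact that its conclusion reproduces its own hypotheses; this self-reproducing feature is precisely what permits an infinite iteration with no additional assumption on $R$. To set up the recursion I would put $u_0 := a$ and $v_0 := b$, so that the given relations read $u_0 = v_0 u_0$ and $v_0 = c v_0$. These are exactly the two input relations required to invoke \autoref{def:normalring} for the triple $(u_0, v_0, c)$.

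First I would run the induction. Applying left normality to $(u_n, v_n, c)$, starting from $n = 0$, produces elements $u_{n+1}, v_{n+1} \in M_\infty(R)$ with
\[
u_n = u_{n+1} u_n, \quad u_{n+1} = v_{n+1} u_{n+1}, \quad\text{and}\quad v_{n+1} = c v_{n+1}.
\]
The key observation — and the reason the argument closes up — is that the last two relations, $u_{n+1} = v_{n+1} u_{n+1}$ and $v_{n+1} = c v_{n+1}$, are precisely the hypotheses needed to apply \autoref{def:normalring} again, now to $(u_{n+1}, v_{n+1}, c)$. Hence the recursion never stalls, and by induction one obtains sequences $(u_n)$ and $(v_n)$ satisfying $u_n = v_n u_n$ and $v_n = c v_n$ for all $n \geq 0$, together with $u_n = u_{n+1} u_n$ for all $n \geq 0$.

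Finally I would set $d_n := u_n$ for $n \geq 1$ and read off the three desired conclusions: the relation $a = d_1 a$ is the $n=0$ instance $u_0 = u_1 u_0$; the relation $d_n = d_{n+1} d_n$ is $u_n = u_{n+1} u_n$; and $d_n = c d_n$ follows by combining the two carried relations, since
\[
u_n = v_n u_n = (c v_n) u_n = c (v_n u_n) = c u_n.
\]
I do not expect a genuine obstacle here, since no cancellation or extra structure is used. The only point requiring care is the bookkeeping of indices and the recognition that the output $(d,e)$ of left normality satisfies $d = ed$ and $e = ce$, i.e.\ it feeds back into the hypotheses; this is exactly what makes the passage from the finite statement of \autoref{def:normalring} to the infinite sequence possible.
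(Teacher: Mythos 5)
Your proof is correct and follows essentially the same route as the paper's: both iterate the left normality axiom, using the key observation that the output relations $d=ed$ and $e=ce$ of \autoref{def:normalring} reproduce the input hypotheses $a=ba$, $b=cb$, so the recursion closes up (your pairs $(u_n,v_n)$ are exactly the paper's $(d_n,e_n)$, and your verification $u_n=v_nu_n=cv_nu_n=cu_n$ is the same computation the paper uses to get $cd_n=d_n$). No gaps.
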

\begin{proof} 
	It follows from the definition of left normal ring that there are $d_1,e_1\in M_\infty (R)$ such that 
	\[
	a=d_1a,\quad 
	d_1= e_1 d_{1},\quad \text{and}\quad 
	e_1=ce_1.
	\]
	
	Hence $d_1= e_1d_1 = ce_1d_1= cd_1$. 
	
	Now, proceeding by induction, let $n\geq 1$ and assume that we have found  elements  $d_1,\ldots, d_n, e_n$ in $M_\infty (R)$ such that
	\[
	a=d_1a,\quad 
	d_n=e_n d_n,\quad 
	e_n= c e_n,\quad \text{and}\quad
	d_i= d_{i+1}d_i
	\] 
	for $i=1,\ldots, n-1$.
	
	Note that these conditions imply that $cd_i=d_i$ for all $i=1,\dots ,n$. Indeed, we have
	\[
	cd_{n}= ce_{n}d_{n} = e_{n}d_{n}= d_{n}
	\]
	and, using that $d_i=d_{i+1}d_i$, one also gets $cd_i=d_i$ for each $i$.
	
	Using that $R$ is left normal once again with $d_n,e_n,c$ we get $d_{n+1},e_{n+1}\in M_\infty (R)$ such that
	\[
	d_n= d_{n+1}d_n,\quad 
	d_{n+1}= e_{n+1}d_{n+1},\quad \text{and}\quad 
	e_{n+1}=ce_{n+1},
	\]
	thus completing the inductive argument, and the proof.
\end{proof}
Recall that the semigroups $\Lambda(R)$ and $\SR(R)$ are equipped with an auxiliary relation $\prec$ defined as $[(x_n)]\prec [(y_n)]$ if there is $m$ such that $x_n\precsim_1 y_m$ for all $m$; see \autoref{rmk:prec}. As we show below, this is identified with the way-below relation in relevant cases.
\begin{theorem}
	\label{thm:dense}
	Let $R$ be a ring.
	\begin{enumerate}[{\rm (i)}]
		\item If $R$ is  dense, then $\Lambda(R)$ is a $\Cu$-semigroup, and $\prec = \ll$ on $\Lambda (R)$.
		\item If $R$ is left normal, then $\SR(R)$ is a $\Cu$-semigroup, and $\prec = \ll$ on $\SR (R)$.
	\end{enumerate}
\end{theorem}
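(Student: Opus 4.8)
The plan is to handle both parts through one formal reduction and then supply the geometric input—density for $\Lambda(R)$, left normality for $\SR(R)$—only where it is actually needed. By \autoref{rmk:prec}, both $\Lambda(R)$ and $\SR(R)$ are $\Q$-semigroups with additive auxiliary relation $\prec$, and $\prec$ is stronger than $\ll$; in particular (O1) and (O4) already hold. Thus it suffices to prove the following $\prec$-version of (O2): every element is the supremum of a sequence that is increasing for $\prec$. Granting this, $\prec=\ll$ follows formally: if $a\ll b$, write $b=\sup_m a_m$ with $(a_m)$ being $\prec$-increasing; since $a_m\prec a_{m+1}\leq b$ gives $a_m\prec b$, and $a\leq a_m$ for some $m$ by the definition of $\ll$, the auxiliary-relation property yields $a\leq a_m\prec b$, hence $a\prec b$. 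Combined with $\prec\Rightarrow\ll$ this gives $\prec=\ll$, and then (O3) is immediate from the additivity of $\prec$. So the whole statement reduces to the $\prec$-(O2) property, and this is the only place where the hypotheses on $R$ intervene.

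For (i), fix $[(x_n)]\in\Lambda(R)$. For each $m$ I would use density of $\precsim_1$ to interpolate a chain $x_m\precsim_1 z_1^{(m)}\precsim_1 z_2^{(m)}\precsim_1\cdots\precsim_1 x_{m+1}$ and set $a_m:=[(x_m,z_1^{(m)},z_2^{(m)},\dots)]$. Every term of $a_m$ is $\precsim_1 x_{m+1}$, which is the first term of $a_{m+1}$, so $a_m\prec a_{m+1}$; and since $x_m$ is the first term of $a_m$ while all terms of $a_m$ are bounded by $x_{m+1}$, one gets both $[(x_n)]\leq\sup_m a_m$ and $a_m\leq[(x_n)]$, whence $\sup_m a_m=[(x_n)]$. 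This establishes $\prec$-(O2) for $\Lambda(R)$.

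For (ii) the approximants must lie in $\preS(R)$, so interpolation is replaced by \autoref{lem:easylemma}, whose conclusion is tailored to exactly this purpose: from nested relations $a=ba$ and $b=cb$ it produces a sequence $(d_j)$ with $d_j=d_{j+1}d_j$ (so $d_j=d_{j+2}d_{j+1}d_j$ and $(d_j)\in\preS(R)$), with $d_j=cd_j$ (so $d_j\precsim_1 c$), and with $a=d_1a$ (so $a\precsim_1 d_1$). Thus the $m$-th approximant $a_m:=[(d_j^{(m)})_j]\in\SR(R)$ will satisfy $x_m\precsim_1 d_1^{(m)}$ provided $x_m\precsim_1 a$, and all its terms will be $\precsim_1 c$; if moreover $c\precsim_1 x_K$ for some $K$, then $a_m\prec[(x_n)]$. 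What remains is to feed \autoref{lem:easylemma} the right data. The defining relation $x_n=y_{n+1}x_{n+1}x_n$ exhibits $y_{n+1}x_{n+1}$ as a left unit of $x_n$, i.e. $a=ba$ with $a=x_m$ and $b=y_{m+1}x_{m+1}$; the difficulty is that these left units do not themselves admit left units, so $b=cb$ is not available on the nose. The plan is therefore to first replace $(x_n)$ by a representative of the same class in which consecutive terms are mutual left units—for instance, using the identification $\SR(R)\cong\CP(R)$ to represent the class by increasing idempotents $e_n=e_{n+1}e_n$—so that $a=e_m$, $b=e_{m+1}$, $c=e_{m+2}$ satisfy $a=ba$ and $b=cb$ on the nose, with $c=e_{m+2}\precsim_1 e_{m+2}$ and $e_m\precsim_1 d_1^{(m)}$. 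One then checks $a_m\leq[(x_n)]$ and $a_m\prec a_{m+2}$, and passes to the subsequence $(a_{2k})_k$, which is $\prec$-increasing with supremum $[(x_n)]$; this gives $\prec$-(O2) for $\SR(R)$.

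I expect part (ii) to be the crux, and within it the genuine difficulty to be arranging the hypotheses of \autoref{lem:easylemma} with both the domination $x_m\precsim_1 a$ and the boundedness $c\precsim_1 x_K$ holding simultaneously—this is precisely the step that forces one to work with a well-chosen representative and where left normality is actually consumed (through \autoref{lem:easylemma}). The accompanying bookkeeping, verifying that the constructed sequence lies in $\preS(R)$, is $\prec$-increasing after subsequencing, and has supremum the original element, is routine but must be carried out carefully.
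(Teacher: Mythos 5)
Your overall architecture coincides with the paper's proof: the formal reduction (it suffices to exhibit each element as the supremum of a $\prec$-increasing sequence, after which $\ll\Rightarrow\prec$ follows by choosing $m$ with $a\leq a_m\prec a_{m+1}\leq b$, and (O2), (O3) drop out) is exactly how the paper closes both parts; your part (i) is the paper's interpolation argument verbatim; and your part (ii) correctly identifies both the role of \autoref{lem:easylemma} and the precise normalization it needs, namely a representative with $x_{n+1}x_n=x_n$ for all $n$, so that $a=x_m$, $b=x_{m+1}$, $c=x_{m+2}$ satisfy $a=ba$ and $b=cb$ on the nose. All your small verifications (that $(d_j)\in\preS(R)$, that $a\precsim_1 d_1$ and $d_j\precsim_1 c$ follow by left-multiplying the unit equations, the passage to the subsequence $(a_{2k})$) are sound and match the paper's bookkeeping, where the paper reindexes instead of subsequencing.

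The one genuine gap is your proposed mechanism for the normalization: you cannot, in general, represent a class in $\SR(R)\cong\CP(R)$ by \emph{idempotents} $e_n$ with $e_{n+1}e_n=e_n$. If such a representative existed, the chain $e_1R^{(\infty)}\subseteq e_2R^{(\infty)}\subseteq\cdots$ would consist of finitely generated projectives in which each inclusion splits (the map $v\mapsto e_nv$ retracts $e_{n+1}R^{(\infty)}$ onto $e_nR^{(\infty)}$, using idempotency), so the corresponding countably generated projective module would decompose as a countable direct sum of finitely generated projectives. This fails already for $R=C([0,1])$ and the pure (hence quasipure) ideal $I=\{f : f\equiv 0 \text{ near } 0\}$: it is countably generated projective, but since $R$ has no nontrivial idempotents every finitely generated projective is free, and $I$ is neither principal nor free of higher rank (a rank $\geq 2$ free ideal in a commutative ring is impossible because of the relation $gf-fg=0$), so $I$ is not a direct sum of finitely generated projectives and its class admits no such idempotent representative. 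Fortunately your hedged formulation ("consecutive terms are mutual left units") is the correct target: the weaker normalization $x_{n+1}x_n=x_n$, with no idempotency, is always achievable — this is precisely \cite[Corollary 4.11]{AntAraBosPerVil23:CuRing}, which is what the paper invokes at this step — and with that citation substituted for the idempotent construction, the remainder of your argument for (ii) goes through as written (noting, as the paper does, that $x_n\precsim_1 x_{n+1}$ persists after normalization via $x_n=x_{n+2}x_{n+1}x_n$, and that suprema in $\SR(R)$ agree with those computed in $\Lambda(R)$ by \cite[Lemma 4.3]{AntAraBosPerVil23:CuRing}).
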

\begin{proof}
	In both (i) and (ii) we already know, irrespective of other assumptions, that $\Lambda(R)$ and $\SR(R) $ are $\Q$-semigroups, and thus satisfy axioms (O1) and (O4). 
	
	(i): Let $[(x_n)]\in \Lambda(R)$, and let $z_1^{(n)}=x_n$. Since $\precsim_1$ is dense, there is a sequence $z_k^{(n)}$ such that $x_n\precsim_1 z_k^{(n)}\precsim_1z_{k+1}^{(n)}\precsim_1 x_{n+1}$ for all $n$ and $k\geq 2$. Define $z_n=[(z_k^{(n)})_k]$ and note that by construction $z_m \prec z_{m+1}\prec [(x_n)]$ for each $m$.
	
	We now claim that $[(x_n)]=\sup_m z_m$. To verify this, let us briefly recall the details on how suprema are constructed in $\Lambda(R)$ (see \cite[Proposition 2.13]{AntAraBosPerVil23:CuRing} and also \cite[Proposition 3.1.6]{APT-Memoirs2018}).  By an inductive process, one may choose an increasing sequence $(m_k)$ such that $z_{m_i+j}^{(i)}\precsim_1 z_{m_k}^{(k)}$ whenever $i+j\leq k$. Then the sequence $(z_{m_k}^{(k)})_k$ defines an element in $\Lambda(R)$ and $\sup z_m=[(z_{m_k}^{(k)})]$. Now, for each $n$, we have that $x_n=z_1^{(n)}\precsim_1 z_{m_n}^{(n)}$, and thus $[(x_n)]\leq [(z_{m_k}^{(k)})]$. Conversely, for each $k$, we have that $z_{m_k}^{(k)}\precsim_1 x_{k+1}$, thus establishing the claim.
	
	Since $\prec$ is compatible with addition, it only remains to show that $\prec$ agrees with the compact containment relation. As observed in \autoref{par:QCu}, $\prec$ is always stronger than $\ll$. Hence, assume that $[(x_n)]\ll [(y_n)]$. By the first part of the proof, $[(y_n)]=\sup w_m$, where $w_m=[(w_k^{(m)})_k]$ satisfying $w_1^{(m)}=y_m$ and $w_k^{(m)}\precsim_1 y_{m+1}$ for all $k$. Then, there is $m$ such that $[(x_n)]\leq w_m$ and therefore, for each $n$, there is $k$ with $x_n\precsim_1 w_k^{(m)}\precsim_1 y_{m+1}$. This implies that $[(x_n)]\prec [(y_m)]$.	
	
	(ii): Given $[(x_n)]\in \SR(R)$, we first use \cite[Corollary 4.11]{AntAraBosPerVil23:CuRing} to assume without loss of generality that $x_{n+1}x_n=x_n$ for all $n$. (Note that we still have $x_n\precsim_1 x_{n+1}$, since $x_n=x_{n+1}x_n=x_{n+2}x_{n+1}x_n$.)
	
	Since for each $n$ we have $x_n=x_{n+1}x_n$ and $x_{n+1}=x_{n+2}x_{n+1}$ we may apply \autoref{lem:easylemma} to find a sequence $(z_k^{(n)})_k$ such that $x_n=z_1^{(n)}x_n$, $z_k^{(n)}=z_{k+1}^{(n)}z_k^{(n)}$, and $z_k^{(n)}=x_{n+2}z_k^{(n)}$ for each $k$. Using that $x_{n+2}=x_{n+3}x_{n+2}$ we have
	\[
	x_n\precsim_1 z_k^{(n)}\precsim_1 z_{k+1}^{(n)}\precsim_1 x_{n+2}
	\]
	for each $k,n$. We may assume, after reindexing, that $x_n\precsim_1 z_k^{(n)}\precsim_1 z_{k+1}^{(n)}\precsim_1 x_{n+1}$.
	
	Therefore $z_n:=[(z_k^{(n)})_k]\in\SR(R)$ and $z_m\prec z_{m+1}\prec [(x_n)]$ for each $m$. By (the proof of) \cite[Lemma 4.3]{AntAraBosPerVil23:CuRing}, the supremum of an increasing sequence in $\SR(R)$ agrees with the supremum of the same sequence computed in $\Lambda(R)$. Therefore, we may proceed as in the proof of (i) to conclude that $[(x_n)]=\sup_m z_m$.
	
	Now the same argument used in (i) shows that $\prec$ agrees with the relation $\ll$ on $\SR (R)$. This finishes the proof.
\end{proof}
\begin{remark}
	Observe that the notion of left normality provides the appropriate density condition for the semigroup $\SR(R)$ to be in $\Cu$, but that there is in general no apparent connection between density and left normality.	
\end{remark}	
Let us denote by $\mathrm{Rings}^{dense}$ the full subcategory of $\mathrm{Rings}$ whose objects are dense rings. 
\begin{corollary}
	Let $R$ be a dense ring, then $\SCu (R)=(\Lambda (R), \SR (R))$ is an object in $\SCu$.~Further, the assignment $R\mapsto \SCu(R)$ defines a functor $\SCu\colon \mathrm{Rings}^{dense}\to \SCu$, and we have a commutative diagram
	\[
	\xymatrix{
		\mathrm{Rings} \ar@{->}[r]^{\SQ} & \SQ       \\
		\mathrm{Rings}^{dense}  \ar@{^{(}->}[u]^\iota \ar@{->}[r]^{\quad\!\!\!\SCu}	&    \SCu\ar@{^{(}->}[u]_\iota }  
	\]
	where $\iota$ stands for the respective inclusion functors.
\end{corollary}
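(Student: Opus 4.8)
The plan is to obtain this corollary as a formal consequence of \autoref{thm:FunctorSQ} and \autoref{thm:dense}(i), with the density hypothesis entering only through the latter.

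First I would verify that $\SCu(R)=(\Lambda(R),\SR(R))$ is an object of $\SCu$. By the definition in \autoref{par:scu}, this amounts to three conditions: that $\Lambda(R)$ be a $\Cu$-semigroup, that $\prec$ coincide with $\ll$ on $\Lambda(R)$, and that $\SR(R)$ be a submonoid of $\Lambda(R)$ closed under suprema of weakly increasing sequences. The first two are precisely the content of \autoref{thm:dense}(i), which applies because $R$ is dense; the third is already established in \autoref{thm:FunctorSQ}(i), valid for every ring. Note that left normality plays no role here, since we never ask $\SR(R)$ itself to be a $\Cu$-semigroup.

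Next I would deduce functoriality. As $\SCu$ is by construction a full subcategory of $\SQ$, it is enough to observe that the functor $\SQ\colon\mathrm{Rings}\to\SQ$ of \autoref{thm:FunctorSQ}(ii), when restricted to the full subcategory $\mathrm{Rings}^{dense}$, carries every object into $\SCu$ by the previous paragraph. Fullness then guarantees that each $\SQ$-morphism $\SQ(f)$ between such objects is automatically an $\SCu$-morphism, and the functor identities $\SCu(\id_R)=\id$ and $\SCu(g\circ f)=\SCu(g)\circ\SCu(f)$ are inherited directly from $\SQ$. Hence $\SCu\colon\mathrm{Rings}^{dense}\to\SCu$ is a well-defined functor, namely the corestriction of $\SQ\circ\iota$.

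Finally, the commutativity $\SQ\circ\iota=\iota\circ\SCu$ is immediate from this construction: on a dense ring $R$ both composites return $(\Lambda(R),\SR(R))$, and on a morphism $f$ both return $\SQ(f)$. I do not expect a genuine obstacle; the substantive work has been carried out in \autoref{thm:dense} and \autoref{thm:FunctorSQ}, and the only point deserving a moment of care is that morphisms land in the correct category, which reduces to the fullness of $\SCu$ in $\SQ$.
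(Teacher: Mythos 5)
Your proof is correct and follows essentially the same route as the paper, which likewise deduces the first claim from \autoref{thm:dense} and obtains functoriality and the commutative diagram from \autoref{thm:FunctorSQ} together with the construction of $\SCu$ as a full subcategory of $\SQ$. Your added observations---that left normality plays no role since only the ambient semigroup $\Lambda(R)$ must be a $\Cu$-semigroup, and that fullness of $\SCu$ in $\SQ$ takes care of the morphisms---are accurate elaborations of what the paper leaves implicit.
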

\begin{proof}
	The first part of the statement follows from \autoref{thm:dense}, while the second part is clear from \autoref{thm:FunctorSQ}, and by construction.
\end{proof}
\begin{remark}
	\label{rmk:exactscu}
	Note that, if $I$ is a decomposable ideal of a dense ring $R$, then $\SCu_R (I) = (\Lambda_R(I), \SR (I))$ is an ideal of $\SCu (R)$; see \autoref{IdealsSQ}. Further, restricting to the subcategory of dense rings, we have have by \autoref{prp:ExactSCu} that the sequence 
	\[
	0\rightarrow \SCu_R (I)\rightarrow \SCu (R)\rightarrow \SCu (R/I)\rightarrow 0
	\]
	is exact.
\end{remark}
\section{Inductive limits and continuity}
\label{sec:IndLim}
In this section we show that, for the class of dense rings, the assignment $R\mapsto\Lambda(R)$ defines a continous functor. Similarly, for the class of left normal rings, the assignment $R\mapsto \SR (R)$ defines a continuous functor. We start by recalling how limits in the category $\Cu$ are constructed.

\begin{parag}[Limits in $\Cu$]\label{pgr:LimCu}
	Let $((S_{\lambda})_{\lambda\in\Omega},(f_{\mu,\lambda})_{\mu\geq\lambda})$ be a direct system of $\Cu$-semigroups, that is, each $S_\lambda$ is a $\Cu$-semigroup for each $\lambda$ and, for every pair $\lambda,\mu$ with $\mu\geq \lambda$, there exists a $\Cu$-morphism $f_{\mu,\lambda}\colon S_{\lambda}\to S_{\mu}$ such that
	$f_{\lambda ,\lambda }=\id$ and $f_{\mu,\lambda}\circ f_{\lambda,\nu}=f_{\mu,\nu}$ whenever $\nu\leq\lambda\leq \mu$ in $\Omega$.
	
	By \cite[Corollary~3.1.11]{APT-Memoirs2018} (see also \cite[Theorem~2]{Coward2008}), the system has a direct limit $\lim_\lambda S_\lambda$ in the category $\Cu$. We will denote by $f_\lambda$ the canonical maps $f_\lambda\colon S_\lambda\to\lim_\lambda S_\lambda$ given by the induced limit in $\Cu$.
	
	As shown in \cite[Lemma~3.8]{ThiVil22DimCu} (see also \cite{Coward2008}), a $\Cu$-semigroup $S$ together with maps $f_\lambda\colon S_\lambda\to S$ is the limit of the system above if and only if the following conditions hold:
	\begin{enumerate}[(a)]
		\item $f_\mu\circ f_{\mu,\lambda}=f_\lambda$ whenever $\mu \geq \lambda$;
		\item for any pair $x'\ll x$ in $S_\lambda$ and an element $y\in S_\mu$ such that $f_\lambda (x)\leq f_\mu (y)$, there exists $\nu\geq \mu,\lambda$ such that $f_{\nu, \lambda } (x')\ll f_{\nu, \mu} (y)$;
		\item for every pair $x'\ll x$ in $S$ there exists $y\in S_\lambda$ such that $x'\leq f_\lambda (y)\leq x$.
	\end{enumerate}
\end{parag}
\begin{theorem}
	\label{thm:limitlambda}
	Let $((R_{\lambda})_{\lambda\in\Omega},(\phi_{\mu,\lambda})_{\mu\geq\lambda})$ be a direct system in $\mathrm{Rings}$.
	\begin{enumerate}[{\rm (i)}]
		\item If all $R_\lambda$ are dense (this is the case, for example, if $R_\lambda^2=R_\lambda$ for all $\lambda$), then  $\varinjlim R_\lambda$ is also dense and
		\[
		\Lambda(\varinjlim R_\lambda)=\varinjlim\limits_{\Cu}\Lambda(R_\lambda).
		\]
		\item If all $R_\lambda$ are left normal, then $\varinjlim R_\lambda$ is also left normal and
		\[
		\SR(\varinjlim R_\lambda)=\varinjlim\limits_{\Cu}\SR(R_\lambda).
		\]
	\end{enumerate}
\end{theorem}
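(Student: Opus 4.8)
The plan is to verify, in both cases, the characterization of $\Cu$-limits recalled in \autoref{pgr:LimCu}, applied to the system $(\Lambda(R_\lambda),\Lambda(\phi_{\mu,\lambda}))$ (resp.\ $(\SR(R_\lambda),\SR(\phi_{\mu,\lambda}))$), with candidate limit $\Lambda(R)$ (resp.\ $\SR(R)$), where $R:=\varinjlim R_\lambda$ with canonical maps $\phi_\lambda\colon R_\lambda\to R$, and where the cone maps are $f_\lambda:=\Lambda(\phi_\lambda)$ (resp.\ $f_\lambda:=\SR(\phi_\lambda)$). Throughout I would use the standard fact that a direct limit of rings is a filtered colimit on underlying sets: every element of $M_\infty(R)$ has the form $\phi_\lambda(w)$ for some $\lambda$ and $w\in M_\infty(R_\lambda)$; every equality in $M_\infty(R)$ between elements coming from stage $\lambda$ already holds after applying some $\phi_{\mu,\lambda}$; and consequently every relation $\phi_\lambda(u)\precsim_1\phi_\lambda(v)$ is realized as $\phi_{\mu,\lambda}(u)\precsim_1\phi_{\mu,\lambda}(v)$ at some finite stage $\mu\ge\lambda$ (lift the witnessing factors, then realize the defining equation at a further stage).

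The first task is to show that $R$ is dense (resp.\ left normal), so that \autoref{thm:dense} applies: this makes $\Lambda(R)$ (resp.\ $\SR(R)$) a $\Cu$-semigroup on which $\prec=\ll$, and, combined with \autoref{thm:FunctorSQ}, it guarantees that $f_\lambda$ and the $f_{\mu,\lambda}$ are $\Cu$-morphisms (they preserve order, suprema of increasing sequences, and $\prec=\ll$). For density, given $x\precsim_1 y$ in $M_\infty(R)$, I would realize the relation at a stage $\lambda$ as $w\precsim_1 w'$, apply density of $R_\lambda$ to interpolate $w\precsim_1 z\precsim_1 w'$, and push forward by $\phi_\lambda$. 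For left normality, given $a=ba$ and $b=cb$ in $M_\infty(R)$, I would realize both equalities at a common stage $\lambda$ and apply left normality of $R_\lambda$, pushing the resulting $d,e$ forward.

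Conditions (a) and (b) are then uniform in both parts. Condition (a), $f_\mu\circ f_{\mu,\lambda}=f_\lambda$, is functoriality applied to $\phi_\mu\circ\phi_{\mu,\lambda}=\phi_\lambda$. For (b), suppose $x'\ll x$ in $\Lambda(R_\lambda)$ and $y\in\Lambda(R_\mu)$ with $f_\lambda(x)\le f_\mu(y)$. Writing $x=[(x_n)]$, $x'=[(x_k')]$, $y=[(y_j)]$, the relation $x'\prec x$ yields an $m$ with $x_k'\precsim_1 x_m$ for all $k$, while $f_\lambda(x)\le f_\mu(y)$ gives an index $j_0$ with $\phi_\lambda(x_m)\precsim_1\phi_\mu(y_{j_0})$ in $M_\infty(R)$. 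Realizing this relation at a finite stage produces $\nu\ge\lambda,\mu$ with $\phi_{\nu,\lambda}(x_m)\precsim_1\phi_{\nu,\mu}(y_{j_0})$; since $\phi_{\nu,\lambda}(x_k')\precsim_1\phi_{\nu,\lambda}(x_m)$ for every $k$, the single index $j_0$ witnesses $f_{\nu,\lambda}(x')\prec f_{\nu,\mu}(y)$, i.e.\ $f_{\nu,\lambda}(x')\ll f_{\nu,\mu}(y)$. The argument works verbatim for $\SR$, as it only uses $\precsim_1$.

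The crux is condition (c), and this is where the two parts genuinely diverge; I expect the $\SR$-case to be the main obstacle. For $\Lambda$: given $x'\ll x=[(x_n)]$ in $\Lambda(R)$, pick $m$ with $x_k'\precsim_1 x_m$ for all $k$, realize $x_m\precsim_1 x_{m+1}$ at a finite stage as $w\precsim_1 w'$ in $M_\infty(R_\lambda)$ with $\phi_\lambda(w)=x_m$, $\phi_\lambda(w')=x_{m+1}$, and use density of $R_\lambda$ to build a chain $w\precsim_1 z_1\precsim_1 z_2\precsim_1\cdots\precsim_1 w'$. Then $y:=[(w,z_1,z_2,\dots)]\in\Lambda(R_\lambda)$ satisfies $x'\le f_\lambda(y)\le x$, since every term maps $\precsim_1 x_{m+1}$ while each $x_k'\precsim_1 x_m=\phi_\lambda(w)$. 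For $\SR$ one cannot merely interpolate in $T(R_\lambda)$: the witness $y$ must lie in $\preS(R_\lambda)$. Here I would first normalize $x=[(x_n)]$ so that $x_{n+1}x_n=x_n$ (via \cite[Corollary~4.11]{AntAraBosPerVil23:CuRing}, as in the proof of \autoref{thm:dense}(ii)), realize the equalities $x_i=x_{i+1}x_i$ for $i=m,m+1,m+2$ at a common stage $\lambda$ as $w_i=w_{i+1}w_i$, and then invoke \autoref{lem:easylemma} with $(w_m,w_{m+1},w_{m+2})$ to obtain a sequence $(d_n)$ in $M_\infty(R_\lambda)$ with $w_m=d_1w_m$, $d_n=d_{n+1}d_n$ and $d_n=w_{m+2}d_n$. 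Exactly as in \autoref{thm:dense}(ii), one checks $(d_n)\in\preS(R_\lambda)$ and $w_m\precsim_1 d_n\precsim_1 w_{m+2}$, so $y:=[(d_n)]\in\SR(R_\lambda)$ and $x'\le f_\lambda(y)\le x$. The delicate point in this last step—forcing the interpolating sequence to land in $\preS(R_\lambda)$ rather than merely in $T(R_\lambda)$, which is precisely what left normality buys—is the heart of the argument.
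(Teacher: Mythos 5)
Your proposal is correct and follows essentially the same route as the paper: both verify conditions (a)--(c) of \autoref{pgr:LimCu} after using finite-stage realization of $\precsim_1$-relations to show the limit ring is dense (resp.\ left normal) and invoking \autoref{thm:dense} so that $\prec=\ll$, with condition (c) handled by density interpolation for $\Lambda$ and by the normalization $x_{n+1}x_n=x_n$ plus \autoref{lem:easylemma} for $\SR$. Your identification of the $\SR$-case of (c) as the crux, and your handling of it, matches the paper's argument exactly (including realizing the four consecutive terms and their equalities at a common stage $\lambda$).
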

\begin{proof}
	Write $R=\varinjlim R_\lambda$ and denote by $\phi_\lambda\colon R_\lambda\to R$ the limit maps. Suppose that $x\precsim_1 y$ in $M_\infty(R)$. Then there is $\lambda\in \Omega$ such that $x,y\in M _\infty(R_\lambda)$ and $x\precsim_1 y$ in $M _\infty(R_\lambda)$. This shows that $\precsim_1$ is dense in $M_\infty(R)$. It is also easily checked that, if each $R_\lambda$ is left normal, so is $R$.
	
	(i): For each $\mu\geq\lambda$ in $\Omega$, denote $f_{\mu,\lambda}=\Lambda(\phi_{\mu,\lambda})$ and $f_\lambda=\Lambda(\phi_\lambda)$.
	To check that $\Lambda(R)$ is the limit of the system $(\Lambda(R_\lambda)_\lambda,(f_{\mu,\lambda})_{\mu\geq\lambda})$, we note that $\Lambda(R_\lambda)$ and $\Lambda(R)$ are all $\Cu$-semigroups by (i) in \autoref{thm:dense} and thus we may use the characterization given in \autoref{pgr:LimCu}. Note also that, by the proof of said theorem, the relations $\prec$ and $\ll$ in $\Lambda(R)$ and in any of the $\Lambda(R_\lambda)$ agree. Condition (a) is already satisfied by definition of the maps $f_\lambda$ and $f_{\mu,\lambda}$.
	
	To check condition (b), let $[(x_n)], [(z_n)]\in\Lambda(R_\lambda)$ with $[(z_n)]\prec [(x_n)]$, and let $[(y_n)]\in\Lambda(R_\mu)$. Assume that $f_\lambda([(x_n)])=[(\phi_\lambda(x_n))]\leq[(\phi_\mu(y_n))]= f_\mu([(y_n)])$. We know from the first assumption that there is  $m$ such that $z_n\precsim_1 x_m$ for all $n$. Thus, for $m$ as above there is $l$ such that $\phi_\lambda(x_m)\precsim_1 \phi_\mu(y_l)$ in $M_\infty(R)$. This means that there is $\nu\geq\lambda,\mu$ for which $\phi_{\nu,\lambda}(x_m)\precsim_1\phi_{\nu,\mu}(y_l)$ in $M_\infty(R_{\nu})$. Therefore, $\phi_{\nu,\lambda}(z_n)\precsim_1 \phi_{\nu,\lambda}(x_m)\precsim_1\phi_{\nu,\mu}(y_l)$ for all $n$, and this implies that $f_{\nu,\lambda}([(z_n)])\prec f_{\nu,\mu}([(y_n)])$.
	
	Finally, let us check condition (c). Take $[(x_n)]\prec[(y_n)]$ in $\Lambda(R)$. This implies that there is $m$ such that $x_n\precsim_1 y_m\precsim_1 y_{m+1}$. We may assume that there is $\lambda$ such that  $y_m=\phi_\lambda(y_m'),y_{m+1}=\phi_\lambda(y_{m+1}')$ with $y_m'\precsim_1y_{m+1}'$ in $M_\infty(R_\lambda)$. Since the relation $\precsim_1$ in $M_\infty(R_\lambda)$ is dense by assumption, there is a sequence $(z_k)$ in $M_\infty(R_\lambda)$ such that $y_m'\precsim_1z_k\precsim_1 z_{k+1}\precsim_1 y_{m+1}'$ for all $k$. Now $[(x_n)]\leq f_\lambda[(z_n)]\leq [(y_n)]$, as desired.
	
	(ii): In analogy with (i), for each $\mu\geq\lambda$ in $\Omega$, denote $g_{\mu,\lambda}=\SR(\phi_{\mu,\lambda})$ and $g_\lambda=\SR(\phi_\lambda)$.
	To check that $\SR(R)$ is the limit of the system $(\SR(R_\lambda)_\lambda,(g_{\mu,\lambda})_{\mu\geq\lambda})$, again we use the characterization given in \autoref{pgr:LimCu}:
	
	First note that $\SR(R_\lambda)$ and $\SR(R)$ are all $\Cu$-semigroups by (ii) in \autoref{thm:dense}. It is automatic that the maps $g_{\mu,\lambda},g_\lambda$ satisfy condition (a).
	
	Recall from the proof of \autoref{thm:dense} that the relations $\prec$ and $\ll$ coalesce in $\SR(R_\lambda)$ and $\SR(R)$ since by assumption they are left normal rings. Then, the proof for (b) follows verbatim as in (i) above.
	
	Finally, for (c), assume that $[(x_n)]\prec[(y_n)]$ in $\SR(R)$, where we may assume by \cite[Corollary 4.11]{AntAraBosPerVil23:CuRing} that $y_{n+1}y_n=y_n$ for all $n$. This implies that there is $m$ such that $x_n\precsim_1 y_m$ for all $n$. We may also assume that there is $\lambda$ such that  $y_m=\phi_\lambda(y_m'),y_{m+1}=\phi_\lambda(y_{m+1}'), y_{m+2}=\phi_\lambda(y_{m+2}')$, and $y_{m+3}=\phi_\lambda(y_{m+3}')$ with $y_m'=y_{m+1}'y_m',y_{m+1}'=y_{m+2}'y_{m+1}'$, and $y_{m+2}'=y_{m+3}'y_{m+2}'$  in $M_\infty(R_\lambda)$.
	
	Since $R_\lambda$ is a left normal ring by assumption, we may apply \autoref{lem:easylemma} and find a sequence $(z_k^{(m)})$ such that $y_m'=z_1^{(m)}y_m'$, $z_k^{(m)}=z_{k+1}^{(m)}z_k^{(m)}$, and $z_k^{(m)}=y_{m+2}z_k^{(m)}$ for each $k$. Hence $[(z_k^{(m}))]\in \SR(R_\lambda)$ and we have that $[(x_n)]\leq g_\lambda([z_k^{(m)}])\leq [(y_m)]$.		
\end{proof}
When considering weakly $s$-unital rings, the result in \autoref{thm:limitlambda} (i) may be expressed in terms of the semigroup $\W(R)$; see the discussion in \autoref{par:WRetal}. 
\begin{parag}[Intervals and algebraic $\Cu$-semigroups]
	\label{par:algebraic}
	Recall that a countably generated interval $I$ in a positively ordered semigroup $M$ is an upward directed, order-hereditary subset $I$ of $M$ that has a countable cofinal subset. We denote by $\Lambda_\sigma(M)$ the collection of countably generated intervals. This is also a positively ordered semigroup with order induced by set inclusion and addition defined as $I+J=\{x\in M\colon x\leq y+z\text{ where }y\in I,z\in J\}$; see e.g. \cite[Section 5.5]{APT-Memoirs2018}. As already mentioned in \autoref{par:WRetal}, if $R$ is a weakly $s$-unital ring, then 
	$\Lambda_{\sigma}(\W(R))$ may be identified with $\Lambda(R)$.
	
	Recall that an element $x$ in an  ordered semigroup $S$ satisfying (O1) is termed \emph{compact} provided that $x\ll x$. The submonoid of compact elements of $S$ is denoted by $S_c$. For a $\Cu$-semigroup of the form $\Cu(A)$ of a C*-algebra $A$, the natural compact elements (and very often the only ones) have the form $x=[p]$ where $p$ is a self-adjoint idempotent (a projection). We say that a $\Cu$-semigroup $S$ is \emph{algebraic} provided every element in $S$ is the supremum of an increasing sequence of compact elements. Examples, coming from C*-theory, of algebraic $\Cu$-semigroups include, for example, the Cuntz semigroup of any C*-algebra which, as a ring, is an exchange ring; see \cite[Remark 5.5.2(2)]{APT-Memoirs2018} and \cite[Theorem 7.2]{AraGooOmPar98}. In connection with the discussion above, if $M$ is any positively ordered monoid, then the monoid $\Lambda_\sigma(M)$ is an algebraic $\Cu$-semigroup, with $\Lambda_\sigma(M)_c\cong M$ (see \cite[Lemma 2.15]{AntAraBosPerVil23:CuRing}).~In fact, each interval $I$ generated by a countable cofinal increasing sequence $(x_n)$ may be written as $I=\sup [0,x_n]$, where clearly $[0,x_n]\ll [0,x_n]$ for each $n$. 
\end{parag}	
\begin{parag}[Limits in the category $\POM$]
	Given a direct system $((M_\lambda)_{\lambda\in\Omega}, (f_{\mu ,\lambda})_{\mu\geq\lambda})$ in $\POM$ over a directed set $\Omega$, recall that its direct limit in $\POM$ may be constructed as the algebraic limit $(M,(f_\lambda)_{\lambda\in\Omega})$ of the system, where $f_\lambda\colon M_\lambda\to M$, equipped with the usual addition and `asymptotic' order, that is, $f_\lambda (x )\leq f_\mu (y)$ in $M$ for $x\in M_\lambda$ and $y\in M_\mu$ if there exists $\delta\geq \lambda ,\mu$ such that $f_{\delta ,\lambda }(x)\leq f_{\delta ,\mu }(y)$ in $M_\delta$. We write $ \lim\limits_{\POM} (M_\lambda, f_{\mu,\lambda})$, or just $\lim\limits_{\POM} M_\lambda$. In the following, we denote by Rings$^{ws}$ the category of weakly s-unital rings and ring homomorphisms.
\end{parag}	
\begin{proposition}\label{prp:WSeqCont}
	Let $((M_{\lambda})_{\lambda\in\Omega},(f_{\mu,\lambda})_{\mu\geq\lambda})$ be a direct system in $\POM$ and let $((R_{\lambda})_{\lambda\in\Omega},(\phi_{\mu,\lambda})_{\mu\geq\lambda})$ be a direct system in $\mathrm{Rings}^{ws}$. Then, 
	\[
	\Lambda_\sigma (\lim_{\POM} M_\lambda ) \cong \lim_{\Cu} \Lambda_\sigma (M_\lambda)
	\quad\text{ and }\quad 	\W (\lim R_\lambda ) \cong \lim_{\POM} \W (R_\lambda).
	\]
\end{proposition}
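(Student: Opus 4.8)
The plan is to establish the two isomorphisms separately, since they rest on different limit constructions. For $\Lambda_\sigma(\lim_\POM M_\lambda)\cong\lim_\Cu\Lambda_\sigma(M_\lambda)$ I would exploit that each $\Lambda_\sigma(M)$ is an \emph{algebraic} $\Cu$-semigroup with $\Lambda_\sigma(M)_c\cong M$ via $x\mapsto[0,x]$ (see \autoref{par:algebraic}), together with the concrete criterion (a)--(c) for $\Cu$-limits recalled in \autoref{pgr:LimCu}. Writing $M=\lim_\POM M_\lambda$ with structure maps $f_\lambda\colon M_\lambda\to M$, the functor $\Lambda_\sigma$ sends $f_\lambda$ to the $\Cu$-morphism carrying an interval $I$ to the interval generated by $f_\lambda(I)$; on compact elements this is simply $[0,a]\mapsto[0,f_\lambda(a)]$. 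Condition (a) is then immediate from functoriality and $f_\mu\circ f_{\mu,\lambda}=f_\lambda$.

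For conditions (b) and (c) I would first reduce to compact elements: since each $\Lambda_\sigma(M_\lambda)$ is algebraic, any relation $x'\ll x$ can be replaced by $c\ll x$ with $c$ compact and $x'\le c\le x$, using that $\ll$ absorbs $\le$ on both sides. For (b), write $c=[0,a]$ with $a\in M_\lambda$ and let $y$ be generated by an increasing sequence $(y_k)$ in $M_\mu$; the hypothesis $\Lambda_\sigma(f_\lambda)([0,a])\le\Lambda_\sigma(f_\mu)(y)$ forces $f_\lambda(a)\le f_\mu(y_k)$ in $M$ for some $k$, and by the \emph{asymptotic} description of the order in the $\POM$-limit there is $\nu\ge\lambda,\mu$ with $f_{\nu,\lambda}(a)\le f_{\nu,\mu}(y_k)$ in $M_\nu$. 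Since $[0,f_{\nu,\mu}(y_k)]$ is a compact element way-below $\Lambda_\sigma(f_{\nu,\mu})(y)$ and dominates $[0,f_{\nu,\lambda}(a)]=\Lambda_\sigma(f_{\nu,\lambda})([0,a])$, this yields $\Lambda_\sigma(f_{\nu,\lambda})([0,a])\ll\Lambda_\sigma(f_{\nu,\mu})(y)$, hence (b). For (c), a compact element below $x\in\Lambda_\sigma(M)$ has the form $[0,b]$ with $b\in M$, and since $M$ is an algebraic $\POM$-limit we may write $b=f_\lambda(b')$; then $y=[0,b']$ satisfies $[0,b]\le\Lambda_\sigma(f_\lambda)(y)\le x$.

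For the second isomorphism I would argue via the universal property of the $\POM$-limit. First observe that a direct limit $R=\lim R_\lambda$ of weakly $s$-unital rings is again weakly $s$-unital, since a relation $x=sxt$ witnessed in some $M_n(R_\lambda)$ persists in $M_n(R)$, so $\W(R)$ is defined. The maps $\W(\phi_\lambda)$ form a cocone and thus induce a $\POM$-morphism $\theta\colon\lim_\POM\W(R_\lambda)\to\W(R)$, which I would show is a bijective order-embedding. Surjectivity uses that $M_\infty$ commutes with ring direct limits: every $x\in M_\infty(R)$ lifts to some $x'\in M_\infty(R_\lambda)$, whence $[x]=\W(\phi_\lambda)([x'])$. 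For the order, suppose $\W(\phi_\lambda)([x])\le\W(\phi_\mu)([y])$, that is $\phi_\lambda(x)=s\,\phi_\mu(y)\,t$ in $M_\infty(R)$; lifting $s,t$ to some $M_\infty(R_\nu)$ and using that two elements of $M_\infty(R_\nu)$ with equal image in $M_\infty(R)$ already agree after applying some $\phi_{\nu',\nu}$, one obtains $\phi_{\nu',\lambda}(x)\precsim_1\phi_{\nu',\mu}(y)$, i.e.\ $[x]\le[y]$ in $\lim_\POM\W(R_\lambda)$. Thus $\theta$ reflects the order, and together with surjectivity and antisymmetry of the orders on both $\W(R)$ and the $\POM$-limit this shows $\theta$ is a $\POM$-isomorphism.

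I expect the main obstacle to be the bookkeeping in condition (b) of the first isomorphism and the order-reflection step of the second, both of which hinge on the single mechanism that the \emph{asymptotic} order (respectively equality) in a direct limit is detected at a finite stage. Once the explicit form of the intervals and of $\Lambda_\sigma$ on morphisms is in place, and once the standard lifting properties of ring direct limits are invoked, the remaining verifications are routine.
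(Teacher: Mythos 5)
Your proof is correct, but for the first isomorphism you take a genuinely different route from the paper. The paper's argument is categorical: it invokes the equivalence $\POM\simeq\Cu_{\mathrm{alg}}$ given by $M\mapsto\Lambda_\sigma(M)$ and $S\mapsto S_c$, together with the bijection $\Cu(\Lambda_\sigma(M),S)\cong\POM(M,S_c)$ from \cite{APT-Memoirs2018}; in other words, $\Lambda_\sigma$ is (essentially) a left adjoint and therefore preserves inductive limits, which settles $\Lambda_\sigma(\lim_{\POM}M_\lambda)\cong\lim_{\Cu}\Lambda_\sigma(M_\lambda)$ in two lines. You instead verify conditions (a)--(c) of \autoref{pgr:LimCu} by hand, using algebraicity of $\Lambda_\sigma(M_\lambda)$ to interpolate a compact element $x'\le[0,a]\le x$ in (b) and the finite-stage detection of the asymptotic order to pass from $f_\lambda(a)\le f_\mu(y_k)$ in $M$ to $f_{\nu,\lambda}(a)\le f_{\nu,\mu}(y_k)$ in some $M_\nu$; your use of the fact that a compact element below $z$ is way below $z$ then closes (b), and your treatment of (c) via $b=f_\lambda(b')$ is correct. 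Your approach is more explicit and self-contained, showing exactly where the asymptotic order enters, at the cost of the bookkeeping; note, though, that you still implicitly rely on the cited functoriality (that $\Lambda_\sigma(g)$ is a $\Cu$-morphism for a $\POM$-morphism $g$), so you do not fully escape the machinery of \cite{APT-Memoirs2018} --- you should either cite it for this point or include the short direct check. For the second isomorphism your argument coincides with the paper's: the induced map $\lim_{\POM}\W(R_\lambda)\to\W(\lim R_\lambda)$, surjectivity because $M_\infty$ commutes with ring direct limits, and order-reflection by lifting the witnesses $s,t$ of $\phi_\lambda(x)=s\,\phi_\mu(y)\,t$ to a finite stage and using that equality of finitely many matrix entries is detected at some $\phi_{\nu',\nu}$ (the paper's ``equational nature of $\precsim_1$''); your preliminary observation that a direct limit of weakly $s$-unital rings is weakly $s$-unital is a point the paper leaves implicit and is worth keeping.
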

\begin{proof}
	It was proved in \cite[Proposition 5.5.5 and Remark 5.5.6]{APT-Memoirs2018} that the correspondence $M\mapsto \Lambda_\sigma (M)$ extends to a functor $\POM \to \Cu_\mathrm{alg}$ that yields an equivalence between these categories (via the functor $\Cu_{\mathrm{alg}}\to \POM$ given by $S\mapsto S_c$). One furthermore gets a bijection between the morphism sets  
	\[
	\Cu(\Lambda_\sigma(M),\Lambda_\sigma(N))\cong\POM(M,N).
	\]
	With this at hand, in combination with \cite[Proposition 3.1.6 and Theorem 3.1.8]{APT-Memoirs2018} (which provides us with a bijection $\Cu(\Lambda_\sigma(M),S)\cong \POM(M,S_c)$ for any $\Cu$-se\-mi\-group $S$), one gets $\Lambda_\sigma(\lim\limits_{\POM}M_\lambda)\cong\lim\limits_{\Cu}\Lambda_\sigma(M_\lambda)$, thus establishing the leftmost isomorphism.
	
	For the rightmost isomorphism, we first apply the functor $\W(\text{--})$ to the system $((R_{\lambda})_{\lambda\in\Omega},(\phi_{\mu,\lambda})_{\mu\geq\lambda})$ and its limit $(\lim R_\lambda, (\phi_\lambda)_{\lambda\in\Omega})$ to obtain a direct system in the category $\POM$ and  a $\POM$-morphism $\varphi\colon \lim\limits_{\mathrm{POM}}\W(R_\lambda)\to\W(\lim R_\lambda)$ such that the following diagram commutes:
	\[
	\xymatrix{
		\W (R_\lambda) \ar@{->}[r]   \ar@{->}[dr]_{\W (\phi_{\lambda })}                   & \lim\limits_{\POM} \W (R_\lambda ) \ar@{->}[d]^{\varphi}       \\
		&    \W (\lim R_\lambda ) }  
	\]
	
	We claim that $\varphi$ is an isomorphism. To see that it is an order-embedding, let $a,b\in \lim_{\POM} \W (R_\lambda )$ be such that $\varphi(a)\le \varphi(b)$ in $\W (\lim R_\lambda )$. Let $\lambda\in\Omega$ and $x,y \in M_\infty (R_\lambda )$ be such that $a=[\phi_\lambda(x)]$ and $b=[\phi_\lambda(y)]$. By the commutativity of the diagram above, we get
	\[
	\W (\phi_{\lambda})([x])\le \W (\phi_{\lambda})([y])
	\]
	and, therefore,  $\phi_\lambda (x)\precsim_1 \phi_\lambda (y)$ in $M_\infty (\lim R_\lambda )$.
	
	By the equational nature of $\precsim_1$ and the construction of the inductive limit in $\mathrm{Rings}^{ws}$, there exists $\mu\geq \lambda$ such that $\phi_{\mu ,\lambda }(x)\precsim_1\phi_{\mu ,\lambda }(y)$. This implies that $[\phi_{\mu ,\lambda }(x)]\leq [\phi_{\mu ,\lambda }(y)]$ in $\W (R_\mu)$. Thus, one has 
	\[
	a=[\phi_\lambda(x)]=[\phi_\mu(\phi_{\mu,\lambda}(x))]\leq [\phi_{\mu}(\phi_{\mu ,\lambda }(y))]=[\phi_{\lambda}(y)]=b,
	\]
	as desired.
	
	To check that $\varphi$ is also surjective, note that an element of $\W (\lim R_\lambda )$ is of the form $[\phi_{\lambda}(x)]$ for some $\lambda$ and $x\in M_\infty (R_\lambda )$. Therefore, one has
	\[
	[\phi_{\lambda}(x)] = \W (\phi_{\lambda})([x]) \in \varphi \left( \lim_{\POM} \W (R_\lambda ) \right) ,
	\]
	as required.
\end{proof}
We thus obtain a different proof of \autoref{thm:limitlambda} in the case of weakly $s$-unital rings.
\begin{corollary}\label{thm:ParCon}
	The functor $\Lambda \colon \mathrm{Rings}^{ws}\to \Cu$, $R\mapsto \Lambda (R)$, is continuous.
\end{corollary}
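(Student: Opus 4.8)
The plan is to deduce the statement purely formally from the two isomorphisms established in \autoref{prp:WSeqCont}, together with the identification $\Lambda(R)\cong\Lambda_\sigma(\W(R))$ available for any weakly $s$-unital ring (recalled in \autoref{par:WRetal} and \autoref{par:algebraic}). Continuity of $\Lambda$ means that, for any direct system $((R_\lambda)_{\lambda\in\Omega},(\phi_{\mu,\lambda})_{\mu\geq\lambda})$ in $\mathrm{Rings}^{ws}$ with limit $R=\lim R_\lambda$, the canonical $\Cu$-morphism $\lim_{\Cu}\Lambda(R_\lambda)\to\Lambda(R)$ induced by the universal property is an isomorphism.

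First I would check that $\mathrm{Rings}^{ws}$ is closed under direct limits, so that $\Lambda(R)$ is indeed defined through $\W(R)$. Denoting by $\phi_\lambda\colon R_\lambda\to R$ the limit maps, every $x\in M_n(R)$ has the form $x=\phi_\lambda(x')$ for some $\lambda$ and some $x'\in M_n(R_\lambda)$; choosing $s',t'\in M_n(R_\lambda)$ with $x'=s'x't'$ and applying $\phi_\lambda$ yields $x=sxt$ with $s=\phi_\lambda(s')$ and $t=\phi_\lambda(t')$. Hence $R$ is weakly $s$-unital, and the first isomorphism below makes sense.

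The core of the argument is then the chain of isomorphisms
\[
\Lambda(R)\cong\Lambda_\sigma(\W(R))\cong\Lambda_\sigma\bigl(\lim_{\POM}\W(R_\lambda)\bigr)\cong\lim_{\Cu}\Lambda_\sigma(\W(R_\lambda))\cong\lim_{\Cu}\Lambda(R_\lambda).
\]
Here the first isomorphism is the identification of \autoref{par:WRetal} applied to the weakly $s$-unital ring $R$; the second is obtained by applying the functor $\Lambda_\sigma$ to the rightmost isomorphism $\W(\lim R_\lambda)\cong\lim_{\POM}\W(R_\lambda)$ of \autoref{prp:WSeqCont} (using $R=\lim R_\lambda$); the third is the leftmost isomorphism of \autoref{prp:WSeqCont} with $M_\lambda=\W(R_\lambda)$; and the fourth is again \autoref{par:WRetal}, now applied termwise to each $R_\lambda$, which is compatible with the structure maps and hence passes to the limit.

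The one point requiring care --- and the step I expect to be the main (if modest) obstacle --- is verifying that the composite of these four isomorphisms is precisely the canonical comparison map, rather than merely \emph{some} isomorphism. For this I would track naturality: each isomorphism above is realized by maps commuting with the relevant structure and limit morphisms (for the isomorphisms of \autoref{prp:WSeqCont} this is exactly the content of the commuting triangle in its proof, and the correspondence $\Lambda_\sigma\colon\POM\to\Cu_{\mathrm{alg}}$ is functorial). Consequently the composite intertwines the two limit cocones $(\Lambda(\phi_\lambda))_\lambda$ and $(f_\lambda)_\lambda$, so by the uniqueness part of the universal property it coincides with the canonical map $\lim_{\Cu}\Lambda(R_\lambda)\to\Lambda(R)$. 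This establishes continuity of $\Lambda$ on $\mathrm{Rings}^{ws}$.
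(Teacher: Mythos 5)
Your proof is correct and follows essentially the same route as the paper, which establishes the corollary via the identical chain $\Lambda(\lim R_\lambda)\cong\Lambda_\sigma(\W(\lim R_\lambda))\cong\Lambda_\sigma(\lim_{\POM}\W(R_\lambda))\cong\lim_{\Cu}\Lambda_\sigma(\W(R_\lambda))\cong\lim_{\Cu}\Lambda(R_\lambda)$, combining \autoref{prp:WSeqCont} with the identification $\Lambda(R)\cong\Lambda_\sigma(\W(R))$ from \autoref{par:WRetal}. The two additional checks you supply---that $\mathrm{Rings}^{ws}$ is closed under direct limits and that the composite isomorphism intertwines the limit cocones and hence equals the canonical comparison map---are points the paper leaves implicit, and your verifications of them are sound.
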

\begin{proof}
	Let $((R_{\lambda})_{\lambda\in\Omega},(\phi_{\mu,\lambda})_{\mu\geq\lambda})$ be a direct system in $\mathrm{Rings}^{ws}$. By \cite[Proposition 2.17]{AntAraBosPerVil23:CuRing}, we have that $\Lambda(R)\cong\Lambda_\sigma(\W(R))$ for any weakly $s$-unital ring. Using this at the first and the last step, and the isomorphisms from \autoref{prp:WSeqCont} at the second and third step, we obtain
	\[
	\Lambda (\lim R_\lambda)\! \cong  
	\Lambda_\sigma (\W (\lim R_\lambda))\! \cong 
	\Lambda_\sigma (\lim_{\rm POM} \W (R_\lambda))\! \cong 
	\lim_\Cu \Lambda_\sigma (\W (R_\lambda))\! \cong 
	\lim_\Cu \Lambda (R_\lambda).\qedhere
	\]
\end{proof}
\section{Continuity of the functor $\SCu$}\label{sec:ContinuitySCu}
In this final section we study the continuity of the functor $\SCu(\text{--})$. To this end, we first prove that the category $\SCu$ admits direct limits. Although, as shown in \autoref{thm:limitlambda} the first component $\Lambda(\text{--})$ is continuous for dense rings, $\SCu(\text{--})$ may not always be continuous in this case, as described in \autoref{exa:SCunotcont}. This is remedied by restricting to the class of dense left normal rings; see \autoref{thm:continuity-in-normalcase}.
\begin{theorem}\label{thm:DirLimEx}
	Direct systems $((S_{\lambda},W_{\lambda})_{\lambda\in\Omega},(\phi_{\mu,\lambda})_{\mu\geq\lambda})$ in $\SCu$ have a limit.
\end{theorem}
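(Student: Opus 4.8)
The plan is to build the limit by first forming the direct limit $S:=\varinjlim_{\Cu} S_\lambda$ of the underlying $\Cu$-semigroups, which exists by \cite[Corollary~3.1.11]{APT-Memoirs2018} (see \autoref{pgr:LimCu}), and then equipping it with the smallest submonoid that both turns it into an object of $\SCu$ and receives all the $W_\lambda$. Denote by $\phi_\lambda\colon S_\lambda\to S$ the canonical $\Cu$-morphisms, set $W_0:=\bigcup_\lambda \phi_\lambda(W_\lambda)$, and define
\[
W:=\{\sup_n x_n \mid (x_n)\text{ is weakly increasing in }S\text{ with } x_n\in W_0\text{ for all }n\}.
\]
As constant sequences are weakly increasing (\autoref{par:wincr}), we have $W_0\subseteq W$, and in particular $\phi_\lambda(W_\lambda)\subseteq W$ for every $\lambda$. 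The candidate limit is the pair $(S,W)$ together with the maps $\phi_\lambda$.

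First I would dispatch the easy structural checks. The set $W_0$ is a submonoid of $S$: directedness of $\Omega$ together with the fact that each $\phi_{\mu,\lambda}$ maps $W_\lambda$ into $W_\mu$ lets one add two elements of $W_0$ inside a common $W_\nu$. Consequently $W$ is a submonoid as well, since if $x=\sup_n x_n$ and $y=\sup_n y_n$ with $(x_n),(y_n)$ weakly increasing in $W_0$, then $(x_n+y_n)$ is again weakly increasing (\autoref{par:wincr}) with terms in $W_0$, and $x+y=\sup_n(x_n+y_n)\in W$. That each $\phi_\lambda$ is an $\SCu$-morphism is then immediate: it is a $\Cu$-morphism by construction of the $\Cu$-limit, and it carries $W_\lambda$ into $W$.

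The main obstacle is to show that $W$ is itself closed under suprema of weakly increasing sequences, so that $(S,W)$ genuinely lies in $\SCu$; a priori the construction might have to be iterated, and the real content is that one step suffices. Given a weakly increasing sequence $(y_k)$ in $S$ with each $y_k=\sup_n x_{k,n}\in W$ (the $(x_{k,n})_n$ weakly increasing in $W_0$), write $y:=\sup_k y_k$ and, using that $S$ is a $\Cu$-semigroup, fix a rapidly increasing sequence $(v_i)$ with $\sup_i v_i=y$. The idea is to select, for each $i$, a single term $z_i:=x_{k_i,n_i}\in W_0$ with $v_i\leq z_i\leq y$ by descending twice through the compact containments $v_i\ll v_{i+1}\ll v_{i+2}$: the first descent, combined with the weak-increasingness of $(y_k)$ (via the associated increasing sequence supplied by \autoref{prp:Lma54}), yields an index $k_i$ with $v_{i+1}\leq y_{k_i}$, and the second descent, now against the weakly increasing sequence $(x_{k_i,n})_n$, yields $n_i$ with $v_i\leq x_{k_i,n_i}$. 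Choosing the $k_i$ increasing, the resulting $(z_i)$ lies in $W_0$ and satisfies $v_i\leq z_i\leq y$ for all $i$; since $(v_i)$ is increasing with $\sup_i v_i=y$, this makes $(z_i)$ weakly increasing (witnessed by $(v_i)$) with $\sup_i z_i=y$, whence $y\in W$. The delicate point is exactly this double descent against \emph{weakly} rather than genuinely increasing sequences, which is handled throughout by passing to the associated increasing sequences provided by \autoref{prp:Lma54}.

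Finally I would verify the universal property. Let $(T,V)\in\SCu$ and let $\psi_\lambda\colon (S_\lambda,W_\lambda)\to (T,V)$ be a compatible cocone. Forgetting the second components, the $\Cu$-universal property of $S$ produces a unique $\Cu$-morphism $\psi\colon S\to T$ with $\psi\circ\phi_\lambda=\psi_\lambda$ for all $\lambda$, so it remains only to check $\psi(W)\subseteq V$. For $x=\sup_n x_n\in W$ with $x_n=\phi_{\lambda_n}(a_n)$ and $a_n\in W_{\lambda_n}$, one has $\psi(x_n)=\psi_{\lambda_n}(a_n)\in V$, while $(\psi(x_n))$ is weakly increasing with $\psi(x)=\sup_n\psi(x_n)$ because $\psi$ is in particular a $\Q$-morphism (\autoref{par:wincr}); closedness of $V$ under suprema of weakly increasing sequences then gives $\psi(x)\in V$. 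Since uniqueness as a $\Cu$-morphism forces uniqueness as an $\SCu$-morphism, $(S,W)$ together with the $\phi_\lambda$ is the direct limit in $\SCu$.
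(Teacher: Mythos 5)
Your proposal is correct and follows essentially the same route as the paper: form the $\Cu$-limit $S$, take $W$ to be the suprema of weakly increasing sequences in $W_0=\bigcup_\lambda\phi_\lambda(W_\lambda)$, prove that one step of this closure suffices, and then verify the universal property exactly as you do. The only (cosmetic) difference is in the closure step, where you interpolate via a rapidly increasing sequence under $y$ supplied by (O2) and a double descent, while the paper instead uses the $\ll$-increasing witness of the outer weakly increasing sequence coming from \autoref{prp:Lma54}; both produce the same sandwiched sequence in $W_0$ with supremum $y$.
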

\begin{proof}
	Let $((S_{\lambda},W_{\lambda})_{\lambda\in\Omega},(\phi_{\mu,\lambda})_{\mu\geq\lambda})$ be a direct system in $\SCu$. By definition, we have a family $(S_{\lambda},W_{\lambda})_{\lambda\in\Omega}$ of objects in $\SCu$ indexed by a directed set $\Omega$ such that, for every pair $\mu\geq \lambda$ there exists a morphism $\phi_{\mu,\lambda}\colon (S_{\lambda},W_{\lambda})\to (S_{\mu},W_{\mu})$ in $\SCu$ such that $\phi_{\lambda ,\lambda }=\id$ and $\phi_{\mu,\lambda}\phi_{\lambda,\nu}=\phi_{\mu,\nu}$.
	
	Then, the induced system $((S_{\lambda})_{\lambda\in\Omega},(\phi_{\mu,\lambda})_{\mu\geq\lambda})$ is a direct system in $\Cu$ which, as  mentioned in \autoref{pgr:LimCu}, has a direct limit $S:=\lim_\lambda S_\lambda$. Denote by $\phi_\lambda$ the canonical maps $\phi_\lambda\colon S_\lambda\to\lim_\lambda S_\lambda$ given by the induced limit in $\Cu$.
		
	Let $W_0$ denote the union $\cup_\lambda \phi_\lambda (W_\lambda)$ in $S$. Note that, since $W_\lambda$ is a submonoid of $S_\lambda$  and each $\phi_{\mu ,\lambda}$ maps $W_\lambda$ to $W_\mu$, it follows that $W_0$ is a submonoid of $S$.
	
	By \autoref{prp:Lma54} (and the comments in \autoref{par:wincr}), we know that  every weakly increasing sequence in $S$ has a supremum. Now consider the set 
	\[
	W= \left\{
	w =\sup_nw_n\colon (w_n)\subseteq W_0\text{ is a weakly increasing sequence in } S 
	\right\}.
	\]
	Given any $v\in W_\lambda$, it follows that $\phi_\lambda (v)\in W$ by simply considering the constant sequence $(\phi_\lambda (v))_\lambda$. This shows that $\phi_\lambda (W_\lambda )\subseteq W$ for every $\lambda$, and thus also $W_0\subseteq W$.
	
	We claim that $(S,W)$ is the limit of the system $((S_\lambda, W_\lambda),\phi_{\mu,\lambda})$. To see this, let us first show that $(S,W)\in \SCu$. In other words, we need to prove that $W$ is a submonoid of $S$ closed under suprema of weakly increasing sequences.
	
	First, given $u,w\in W$, it follows from \autoref{par:wincr} that $u+w\in W$, and thus $W$ is a submonoid of $S$. Next, let $(w_n)\subseteq W$ be a weakly increasing sequence in $S$, and let $w\in S$ be its supremum. The proof of \autoref{prp:Lma54} yields a $\ll$-increasing sequence $(u_k)$ in $S$ satisfying (i)' and (ii)' as stated in said proof. In particular, by (ii)' we have that $w_n\leq \sup_k u_k$ for all $n$.
	
	By (i)', for each $k$ there is $n_k$ such that $u_k\ll w_n$ whenever $n\geq n_k$. We may also assume without loss of generality that the sequence $(n_k)_k$ is strictly increasing. Now, for $n<n_1$, we set $v_n=0$. Given $n$ such that $n_k\leq n<n_{k+1}$, use that $u_k\ll w_n$ for any such $n$ and the fact that $w_n\in W$, hence is the supremum of a weakly increasing sequence of elements in $W_0$ to find $v_n \in W_0$ such that
	\[
	u_k\ll v_n\leq w_n.
	\]
	
	By construction (and using also that $u_k$ is $\ll$-increasing), for every $k$ we have that $u_k\ll v_n$ whenever $n\geq n_k$. Further, $v_n\leq w_n\leq \sup_k u_k$ for every $n$. 
	
	Thus, again using the proof of \autoref{prp:Lma54}, we have that $(v_n)\subseteq W_0$ is a weakly increasing sequence. Moreover, it is clear from our construction that 
	\[
	\sup_k u_k = \sup_n v_n = \sup w_n=w.
	\]
	Therefore $w\in W$ and thus $(S,W)\in\SCu$.
	
	Finally, let us check that $(S,W)$ is the limit of $((S_{\lambda},W_{\lambda})_{\lambda\in\Omega},(\phi_{\mu,\lambda})_{\mu\geq\lambda})$. To this end, let $(T,Z)$ be an object of $\SCu$ and let $\{\psi_\lambda\}$ be a compatible family of morphisms $\psi_\lambda \colon (S_\lambda ,W_\lambda )\to (T,Z)$ in $\SCu$. Since $(S,\{\phi_{\lambda}\})$ is the direct limit of $((S_{\lambda})_{\lambda\in\Omega},(\phi_{\mu,\lambda})_{\mu\geq\lambda})$ in $\Cu$, it follows that there is a unique $\Cu$-morphism $\psi\colon S\to T$  such that $\psi_\lambda= \psi \circ \phi_\lambda$ for all $\lambda\in\Omega$.
	
	Now, let $w\in W$, and let $(w_n)\subseteq W_0$ be a weakly increasing sequence in $S$ with $w=\sup_n w_n$. Then, by \autoref{par:wincr}, $(\psi(w_n))$ is also a weakly increasing sequence in $T$ with $\psi(w)=\sup_n\psi(w_n)$. Furthermore 
	\[
	(\psi(w_n))\subseteq \psi(W_0)=\psi(\bigcup(\phi_\lambda(W_\lambda)))\subseteq\bigcup\psi(\phi_\lambda(W_\lambda))=\bigcup\psi_\lambda(W_\lambda)\subseteq Z.
	\]
	Since $Z$ is closed under suprema of weakly increasing sequences, we conclude that $\psi(w)\in Z$.	This implies that $\psi$ is a morphism in $\SCu$, as desired.
\end{proof}
We now proceed to show that $\SCu(\text{--})$ is in general not continuous. In the construction below note that, albeit similar, the ring $R$ is not the one used in \cite[Remark~4.8]{AntAraBosPerVil23:CuRing}.
\begin{example}
	\label{exa:SCunotcont}
	There exists a sequence of (unital commutative) rings $(R_n)$ and ring homomorphisms $f_n\colon R_n\to R_{n+1}$ such that $\SCu(\lim R_n)\not\cong\lim\limits_{\SCu}\SCu(R_n)$.	
\end{example}	
\begin{proof}
	Let $K$ be a field. Let $R_n$ be the ring $K[x_1,x_2,\ldots ,x_n]$ with commuting variables $x_1,x_2,\ldots , x_n$ subject to the relations $x_{i+1}x_i= x_i$ for $i=1,\ldots , n-1$. Let $R:=K[x_1,\ldots ]$ be the polynomial algebra in infinitely many commuting variables subject to the relations $x_{i+1}x_i = x_i$ for each $i\geq 1$. Clearly, one has $R=\lim_n R_n$, where the connecting maps and limit maps are given by the natural inclusions $\iota_{n+1,n}\colon R_n\to R_{n+1}$ and $\iota_n\colon R_n\to R$ respectively.
	
	Let $n\in\N$. Observe that each element $a$ of $R_n$ can be uniquely written in the form
	$$a= a_0 + x_1p_1(x_1)+ \cdots + x_n p_n(x_n),$$
	where $a_0\in K$, and $p_i(x_i)\in K[x_i]$. 
	For each $1\leq j\leq n$, we let $I_j$ the ideal $(x_r)$ of $R_n$ generated by $x_1,\dots,x_j$, and clearly $\{0\}\subseteq I_1\subseteq I_2\subseteq \ldots \subseteq I_n$. Note that, with respect to the above normal form of the elements of $R_n$, we have that $a\in I_j$ if and only if $a_0 = p_{j+1}= \cdots = p_n=0$.  
	
	We claim that, if $w=(w_i)_i\in \preS (R_n)$ with each $w_i$ in $M_{\infty}(I_n)$,  then $w=0$.	To see this suppose, by way of contradiction, that $w\ne 0$. 
	
	For each $1\leq r\leq n$, let $\pi_r\colon R_n\to K[x_r]$ be the homomorphism defined by 
	\[
	\pi_r(x_i)=
	\begin{cases}
		0, \text{ if } 1\leq i<r\\
		x_r, \text{ if } i=r\\
		1, \text{ if } r+1\leq i\leq n.
	\end{cases}
	\]
	Now, choose $r$ to be the smallest integer such that each $w_i$ belongs to $M_{\infty}(I_r)$. Therefore $\pi_r (w)= (\pi_r(w_1),\pi_r (w_2),\ldots )$ is a nonzero element in $\preS (K[x_r])$. Indeed there is an entry $a$ of $w_i$ for some $i\in \N$ such that 
	$$a= x_1p_1(x_1)+ \cdots + x_r p_r(x_r)\in I_r \setminus I_{r-1},$$
	which implies that $p_r\ne 0$. Hence $\pi_r (a) = x_rp_r (x_r) \ne 0$, showing that $\pi_r (w_i)\ne 0$. Hence $\pi_r (w)\ne 0$. Note that all the entries in each matrix $\pi_r (w_i)$ belong to the ideal of $K[x_r]$ generated by $x_r$. Hence by \autoref{Ideal_WR} $\pi_r(w)\in \mathcal S ((x_r))$ and we observed in \autoref{exa:IdNoBij} that $\mathcal S ((x_r)) =0$, hence we get a contradiction. Therefore our claim is proved. 
	
	Now write $(S,W)=\lim\limits_{\SCu} \SCu (R_n)$ (this limit exists by \autoref{thm:DirLimEx}). We want to show that the natural map $\Phi\colon(S,W)\to \SCu(R)=(\Lambda(R),\SR(R))$ is not an isomorphism. By the universal property of the inductive limit, if $\varphi_n\colon \SCu(R_n)\to (S,W)$ denote the limit maps, then $\Phi$ satisfies $\Phi\circ\varphi_n=\SCu(\iota_n)$ for each $n$. 
	
	Let $(x_n)\in\preS (R)$ be the sequence given by the commuting variables of $R$. To finish the proof, it is enough to show that $z:=[(x_n)]\in \SR (R)\setminus\Phi(W)$.
	
	Denote by $\pi \colon R\to K$  the homomorphism that sends all variables $x_n$ to $0$, and  let $\pi_n\colon R_n\to K$ be given by $\pi_n = \pi \circ \iota_{n}$.
	
	Suppose that $z\in \Phi(W)$. Then, by the proof of  \autoref{thm:DirLimEx}
	there is a  sequence $(w_n)\subseteq \cup_m \Lambda(\iota_{m}) (\SR (R_m))$, weakly increasing in $\Lambda(R)$, such that $z= \sup w_n$. In particular $\Lambda(\pi)(w_n)\leq\Lambda(\pi)(z)=0$ for all $n$, and thus 
	$$\Lambda (\pi) (w_n)= 0 \qquad  \text{ for all }  n .$$  
	For each $n$, choose $m=m(n)$ such that $w_n=\Lambda(\iota_m)(\tilde{w}_m)$ for $\tilde{w}_m\in\SR(R_m)$. Therefore, for all such $m$, we have 
	\[
	\Lambda(\pi_m)(\tilde{w}_m)=\Lambda(\pi\circ\iota_m)(\tilde{w}_m)=\Lambda(\pi)(w_n)=0.
	\] 	
	This means that, if we write $\tilde{w}_m= [(w^m_i)_i]$ with $(w^m_i)_i\in\preS (R_m)$, then $w^m_i\in M_\infty (\langle x_1,\ldots ,x_m\rangle )\subseteq M_\infty (R_m)$ for all $i$. But by the claim proved above we have $\tilde{w}_m=0$, whence also $w_n=0$ for all $n$. Thus, $z=\sup w_n=0$, which is a contradiction, because $z\ne 0$. Hence $z\in \SR (R)\setminus\Phi(W)$, as desired. 
\end{proof}
\begin{lemma}
	\label{lma:scu}
	Let $((S_{\lambda},W_{\lambda})_{\lambda\in\Omega},(\phi_{\mu,\lambda})_{\mu\geq\lambda})$ be a direct system in $\SCu$ with $W_\lambda\in \Cu$ for all $\lambda$. Then, 
	\[
	\varinjlim\limits_{\SCu}(S_\lambda,W_\lambda)=(\varinjlim\limits_{\Cu}S_\lambda,\varinjlim\limits_{\Cu}W_\lambda).
	\]
\end{lemma}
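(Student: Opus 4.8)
The plan is to lean on the explicit construction of the $\SCu$-limit from \autoref{thm:DirLimEx}. That construction already produces $\varinjlim_{\SCu}(S_\lambda,W_\lambda)=(S,W)$ with $S=\varinjlim_{\Cu}S_\lambda$ and with $W$ equal to the set of suprema of weakly increasing sequences drawn from $W_0:=\bigcup_\lambda\phi_\lambda(W_\lambda)\subseteq S$. Hence the first coordinate is already identified with $\varinjlim_{\Cu}S_\lambda$, and the whole content of the lemma is the identification of the second coordinate: I must show that $W$, equipped with its intrinsic $\Cu$-structure, together with the maps $a_\lambda:=\phi_\lambda|_{W_\lambda}\colon W_\lambda\to W$, is a direct limit in $\Cu$ of the system $(W_\lambda,\phi_{\mu,\lambda}|_{W_\lambda})$. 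The strategy is then to verify the characterization (a)--(c) of \autoref{pgr:LimCu} for $(W,\{a_\lambda\})$.

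First I would record the routine facts. Each $a_\lambda$, and each restricted connecting map, preserves $0$, addition and order, and, since $W_\lambda$, $W_\mu$ and $W$ are closed under suprema of increasing sequences inside their ambient $\Cu$-semigroups while the $\phi$'s preserve such suprema, it also preserves suprema of increasing sequences. With the way-below relation under control (see the next paragraph), condition (a) of \autoref{pgr:LimCu} is immediate from $\phi_\mu\circ\phi_{\mu,\lambda}=\phi_\lambda$, and conditions (b),(c) are obtained by transferring the corresponding properties of the already-known limit $S=\varinjlim_{\Cu}S_\lambda$. Concretely, for (c), given $u'\ll_W u$ in $W$ I would use axiom (O2) inside the $W_\lambda$'s to write $u$ as a supremum of images of $\ll_{W_\lambda}$-rapidly increasing sequences from the $W_\lambda$, and then invoke property (c) for the $S$-system to interpolate an element of some $\phi_\lambda(W_\lambda)\subseteq W_0$ between $u'$ and $u$; condition (b) is analogous, tracing the relevant inequalities down to a single finite stage $\nu$ at which $W_\nu\in\Cu$ may be used.

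The main obstacle is exactly the preservation of the way-below relation, namely that $v'\ll_{W_\lambda}v$ implies $a_\lambda(v')\ll_{W}a_\lambda(v)$ (and the analogous statement for the connecting maps, which is what even makes $\varinjlim_{\Cu}W_\lambda$ well defined). This is delicate because the intrinsic relation $\ll_W$ is governed by the sequences available inside $W$, not inside $S$: one always has $\ll_S|_W\subseteq\ll_W$, but this inclusion is in general \emph{strict}, since $W$ omits many of the intermediate elements of $S$ and therefore the test in the definition of $\ll_W$ ranges over a strictly smaller family of increasing sequences. Consequently $\ll$-preservation cannot be read off from the fact that $\phi_\lambda\colon S_\lambda\to S$ is a $\Cu$-morphism, and one is forced to argue intrinsically. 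This is precisely where the hypothesis $W_\lambda\in\Cu$ is essential: by (O2) every element of $W_\lambda$ is a supremum of a $\ll_{W_\lambda}$-rapidly increasing sequence, so every element of $W$ is realized as a supremum of an increasing sequence from $W_0$ whose successive terms are way-below one another \emph{in} $W$. I expect the technical heart to be a diagonalization, running (O2) simultaneously in the relevant $W_\lambda$ and in $S$ (in the spirit of \autoref{prp:Lma54} and \autoref{par:wincr}), to show that such an increasing sequence from $W_0$ actually detects $\ll_W$, thereby upgrading the finite-stage inequalities delivered by the $S$-limit to the coarser intrinsic relation on $W$.

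Once the $a_\lambda$ and the restricted connecting maps are known to be $\Cu$-morphisms and (a)--(c) are verified, uniqueness of $\Cu$-limits yields a $\Cu$-isomorphism $\varinjlim_{\Cu}W_\lambda\cong W$ compatible with the structure maps, so that $\varinjlim_{\SCu}(S_\lambda,W_\lambda)=(S,W)=(\varinjlim_{\Cu}S_\lambda,\varinjlim_{\Cu}W_\lambda)$. It is worth noting that in the intended application to dense, left normal rings the relation $\prec=\ll$ on $\SR(R_\lambda)$ agrees on the nose with its counterpart on $\Lambda(R_\lambda)$ (\autoref{thm:dense} and \autoref{par:scu}), so there $\ll_{W_\lambda}=\ll_{S_\lambda}|_{W_\lambda}$ and the delicate comparison collapses; the lemma is nonetheless phrased for the general situation, which is why the way-below relation must be handled with care.
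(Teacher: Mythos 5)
You have located the crux correctly---the intrinsic relation $\ll_{W}$ on the second component is in general strictly weaker than the restriction of $\ll_S$, so neither the $\Cu$-morphism property of the restricted maps nor conditions (b) and (c) of \autoref{pgr:LimCu} can be read off from the $S$-limit---but your proposal stops exactly at that point: the step you yourself call the technical heart, namely that increasing sequences drawn from $W_0=\bigcup_\lambda\phi_\lambda(W_\lambda)$ detect $\ll_W$ (equivalently, that $a_\lambda$ and the restricted connecting maps preserve the way-below relation), is announced as an expected diagonalization but never carried out, and it is not routine. Moreover, the transfer of (b) and (c) from the $S$-limit does not work as you describe: the hypotheses of those conditions for the $W$-system involve $x'\ll_{W_\lambda}x$ (respectively $u'\ll_W u$), and since the inclusion $\ll_{S_\lambda}|_{W_\lambda}\subseteq\ll_{W_\lambda}$ goes the wrong way, one cannot feed such pairs into the characterization of $S=\varinjlim_{\Cu}S_\lambda$. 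For (c), what you actually need is that every element of $W$ is the supremum of an \emph{increasing} (not merely weakly increasing) sequence from $W_0$, which is again precisely the unproven detection statement. So, as written, the proposal is a programme whose central step is missing. (Your parenthetical worry that the restricted connecting maps must be $\Cu$-morphisms for $\varinjlim_{\Cu}W_\lambda$ to be defined is legitimate; this is implicitly part of the hypotheses, and in the application to $\SR(R_\lambda)$ it holds because $\prec\,=\,\ll$ there, exactly as you note.)

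The paper's proof avoids the intrinsic analysis of $\ll$ on the concretely constructed second component altogether, and you could adopt its route to close the gap. Set $W=\varinjlim_{\Cu}W_\lambda$, which is a $\Cu$-semigroup, and let $(S,\tilde{W})$ be the $\SCu$-limit from \autoref{thm:DirLimEx}; the universal property yields a unique $\SCu$-morphism $\Phi\colon(S,\tilde{W})\to(S,W)$ with $\Phi\circ\theta_\lambda=f_\lambda$, which restricts to the identity on $S$, so $\tilde{W}\subseteq W$. For the reverse inclusion, given $w\in W$ one uses (O2) \emph{in the abstract limit $W$} (this is the one place where $W_\lambda\in\Cu$, hence $W\in\Cu$, enters) to choose $w_n\ll w_{n+1}\ll w$, and then applies condition (c) of \autoref{pgr:LimCu} to each pair $w_n\ll w_{n+1}$ to interpolate $w_n\leq f_{\lambda_n}(y_n)\leq w_{n+1}$ with $y_n\in W_{\lambda_n}$. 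Then $w=\sup_n f_{\lambda_n}(y_n)$ is the supremum of an increasing sequence of elements of $\tilde{W}$, and $\tilde{W}$ is closed under such suprema, so $w\in\tilde{W}$. This interpolation trick replaces your diagonalization: all uses of $\ll$ take place inside $W=\varinjlim_{\Cu}W_\lambda$, where (O2) and condition (c) come for free from the general theory of $\Cu$-limits, and one never has to verify that sequences from $W_0$ witness the intrinsic way-below relation of the $\SCu$-limit's second component.
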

\begin{proof}
	Let $S=\varinjlim\limits_{\Cu}S_\lambda$ and $W=\varinjlim\limits_{\Cu}W_\lambda$.
	Let $(S,\tilde{W})$ be the limit in $\SCu$ of the system in the statement. By construction (see \autoref{thm:DirLimEx}) we have that $S=\varinjlim\limits_{\Cu}S_\lambda$. Denote by $f_\lambda\colon (S_\lambda, W_\lambda)\to (S,W)$ the natural maps, and by $\theta_\lambda\colon (S_\lambda,W_\lambda)\to (S,\tilde{W})$ the limit maps. Then, there is a (unique) $\SCu$-morphism $\Phi\colon (S,\tilde{W})\to (S,W)$ such that $\Phi\circ\theta_\lambda=f_\lambda$. Note that $\Phi_{|S}=\mathrm{id}_S$.
	
	By definition we already have that $\Phi(\tilde{W})\subseteq W$, hence it remains to show that $W\subseteq\Phi(\tilde{W})$. Let $w\in W$, and choose $w_n$ such that $w_{n}\ll w_{n+1}\ll w$ in $W$, which is possible since $W$ is by assumption a $\Cu$-semigroup. Using that $W=\varinjlim\limits_{\Cu}W_\lambda$ (see \autoref{pgr:LimCu}) we may find $\lambda_n$ and $y_n\in W_{\lambda_n}$ such that $w_n\leq f_{\lambda_n}(y_n)\leq w_{n+1}$. This implies that $w=\sup_n f_{\lambda_n}(y_n)$ and thus $w\in\Phi(\tilde{W})$.	
\end{proof}

\begin{theorem}\label{thm:continuity-in-normalcase}
	Let $((R_{\lambda})_{\lambda\in\Omega},(\phi_{\mu,\lambda})_{\mu\geq\lambda})$ be a direct system of dense, left normal rings. Then $\lim \SCu(R_\lambda) \cong \SCu (\lim R_\lambda)$.
\end{theorem}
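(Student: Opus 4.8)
The plan is to assemble the result directly from the two continuity statements already established, using the structural lemma that computes $\SCu$-limits componentwise. Writing $R=\varinjlim R_\lambda$, the first observation is that, since every $R_\lambda$ is both dense and left normal, the pair $\SCu(R_\lambda)=(\Lambda(R_\lambda),\SR(R_\lambda))$ is a genuine object of $\SCu$: density gives that $\Lambda(R_\lambda)$ is a $\Cu$-semigroup with $\prec=\ll$ by \autoref{thm:dense}(i), while left normality gives that $\SR(R_\lambda)$ is a $\Cu$-semigroup by \autoref{thm:dense}(ii). In particular $W_\lambda:=\SR(R_\lambda)$ lies in $\Cu$ for every $\lambda$, which is precisely the hypothesis needed to invoke \autoref{lma:scu}.

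Next I would apply \autoref{lma:scu} to the direct system $((\Lambda(R_\lambda),\SR(R_\lambda)),\SCu(\phi_{\mu,\lambda}))$, which is a direct system in $\SCu$ by functoriality of the assignment $R\mapsto\SCu(R)$ on dense rings. This yields
\[
\varinjlim_{\SCu}\SCu(R_\lambda)=\Bigl(\varinjlim_{\Cu}\Lambda(R_\lambda),\ \varinjlim_{\Cu}\SR(R_\lambda)\Bigr),
\]
reducing the problem to identifying each component as a limit in $\Cu$.

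Finally, I would feed in the componentwise continuity from \autoref{thm:limitlambda}. Since all $R_\lambda$ are dense, part (i) gives that $R$ is dense and $\Lambda(R)=\varinjlim_{\Cu}\Lambda(R_\lambda)$; since all $R_\lambda$ are left normal, part (ii) gives that $R$ is left normal and $\SR(R)=\varinjlim_{\Cu}\SR(R_\lambda)$. Substituting these identifications into the displayed equality produces
\[
\varinjlim_{\SCu}\SCu(R_\lambda)=(\Lambda(R),\SR(R))=\SCu(\varinjlim R_\lambda),
\]
with all the canonical maps matching by construction, which is the desired isomorphism in $\SCu$.

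As for the main obstacle, there is essentially no computational difficulty left, because the heavy lifting has been carried out in \autoref{thm:limitlambda} and in \autoref{thm:DirLimEx}/\autoref{lma:scu}. The only point requiring genuine care is the compatibility of the two identifications, namely that the limit maps produced by \autoref{lma:scu} coincide with the maps $\Lambda(\phi_\lambda)$ and $\SR(\phi_\lambda)$ appearing in \autoref{thm:limitlambda}; but this is immediate, since both constructions use the canonical $\Cu$-limit maps on each component. The crucial conceptual input is recognizing that left normality is exactly what upgrades $\SR(R_\lambda)$ from a mere submonoid to a $\Cu$-semigroup, thereby unlocking the hypothesis $W_\lambda\in\Cu$ of \autoref{lma:scu}.
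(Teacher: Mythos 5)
Your proposal is correct and follows essentially the same route as the paper's proof: both use \autoref{thm:dense} to place $\Lambda(R_\lambda)$ and $\SR(R_\lambda)$ in $\Cu$, invoke \autoref{thm:limitlambda} for the componentwise identifications $\Lambda(R)=\varinjlim_{\Cu}\Lambda(R_\lambda)$ and $\SR(R)=\varinjlim_{\Cu}\SR(R_\lambda)$, and then apply \autoref{lma:scu} to assemble the $\SCu$-limit. Your additional remark on the compatibility of the canonical limit maps is a harmless elaboration of a point the paper leaves implicit.
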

\begin{proof} 
	Write $R=\lim _\lambda R_\lambda$, and denote by $\phi_\lambda\colon R_\lambda\to R$ the limit maps. We already know that $R$ is dense and left normal.
	
	By \autoref{thm:dense} we know that $\Lambda(R_\lambda)$ and $\SR(R_\lambda)$ are objects in $\Cu$, and by \autoref{thm:limitlambda}, we have that $\Lambda(R)=\varinjlim\limits_{\Cu}\Lambda(R_\lambda)$ as well as $\SR(R)=\varinjlim\limits_{\Cu}\SR(R_\lambda)$. Therefore we may apply \autoref{lma:scu} to conclude that $\varinjlim\limits_{\SCu}(\Lambda(R_\lambda),\SR(R_\lambda))=(\Lambda(R), \SR(R))$, as desired.
\end{proof}

\end{document}